\numberwithin{equation}{section}
{\theoremstyle{definition}\newtheorem{definition}{Definition}[section]

\newtheorem{remark}[definition]{Remark}

}
\newtheorem{proposition}[definition]{Proposition}
\newtheorem{lemma}[definition]{Lemma}
\newtheorem{theorem}[definition]{Theorem}
\newcommand{\M}{\operatorname{M}}
\newcommand{\rl}{\mathord{\text{\rm L}}}
\newcommand{\aut}{\operatorname{Aut}}
\newcommand{\id}{\mathord{\operatorname{id}}}
\newcommand{\si}{\sigma}
\newcommand{\Tr}{\operatorname{Tr}}
\newcommand{\ad}{\operatorname{Ad}}
\newcommand{\Mtil}{\widetilde{M}}
\newcommand{\stab}{\operatorname{Stab}}
\newcommand{\psl}{\operatorname{PSL}}
\newcommand{\Sp}{\operatorname{Sp}}
\newcommand{\SO}{\operatorname{SO}}
\newcommand{\SU}{\operatorname{SU}}
\newcommand{\op}{^\text{\rm op}}
\newcommand{\cb}{_\text{\rm cb}}
\newcommand{\mtil}{\widetilde{M}}
\newcommand{\bim}[3]{\mathord{\raisebox{-0.4ex}[0ex][0ex]{\scriptsize $#1$}{#2}\hspace{-0.05ex}\raisebox{-0.4ex}[0ex][0ex]{\scriptsize $#3$}}}
\newcommand{\cz}{\mathbb{C}} \newcommand{\g}{\mathcal{G}}
\newcommand{\nz}{\mathbb{N}} \newcommand{\uu}{\mathcal{U}}
\newcommand{\zz}{\mathbb{Z}} 
 \newcommand{\rz}{\mathbb{R}}
\newcommand{\ff}{\mathcal{F}} \newcommand{\hh}{\mathcal{H}}
\newcommand{\kk}{\mathcal{K}} \def \mm {\mathcal{M}}
\newcommand{\arn}{\mathcal{A}} \newcommand{\nn}{\mathcal{N}}
 \def \al {\alpha}
\def\La{\Lambda}\def\om{\omega}
\def\de{\delta} \def\be{\beta}\def\De{\Delta}
\newcommand{\norm}[1]{\left\Vert#1\right\Vert}
\newcommand{\abs}[1]{\left\vert#1\right\vert}
\def \it {\textit} \def \te {\theta}\def \bf {\textbf}
\def \sset {\subset}
\def \rm {\textrm}\def \tens{\mathbin{\overline{\otimes}}} \def \<{\langle} \def \>{\rangle}
\newcommand{\map}{\longmapsto}
\newcommand{\ds}{\displaystyle}
\def \fy {\varphi}
\def \pr {\prime} \def \bi {\prime\prime}\def \act {\curvearrowright}
\newcommand{\lf}{\left}
\newcommand{\rg}{\right}
\newcommand{\eps}{\varepsilon}
\newcommand{\T}{\mathbb{T}}
\def \aut {\textrm{Aut}}  \def \id {\textrm{id}}
\def \barr {\begin{array}} \def \earr {\end{array}}
\begin{document}

\title{\boldmath\bf {W$^*$-superrigidity for wreath products with groups having positive first $\ell^2$-Betti number }}
\author{Mihaita Berbec}
\date{}
\maketitle

\begin{abstract}\noindent
In \cite{BV12} we have proven that, for all hyperbolic groups and for all non-trivial free products $\Gamma$, the left-right wreath product group $\mathcal{G}:=(\zz/2\zz)^{(\Gamma)}\rtimes (\Gamma\times\Gamma)$ is W$^\ast$-superrigid, in the sense that its group von Neumann algebra $\rl\mathcal{G}$ completely remembers the group $\mathcal{G}$. In this paper, we extend this result to other classes of countable groups. More precisely, we prove that for weakly amenable groups $\Gamma$ having positive first $\ell^2$-Betti number, the same wreath product group $\g$ is W$^\ast$-superrigid.\\\\
Mathematics Subject Classification 2010: Primary 46L36, Secondary 20E22, 37A20.
\end{abstract}

\section{Introduction and main result}

To any countable discrete group $\Gamma$ we can associate the group von Neumann algebra $\rl\Gamma$ generated by the image of the left regular representation of $\Gamma$ on the Hilbert space $\ell^2(\Gamma)$. This construction goes back to Murray and von Neumann \cite{MvN43} and provides a very rich source of examples of von Neumann algebras. The most interesting case is when $\rl\Gamma$ has trivial center, corresponding to $\Gamma$ having infinite conjugacy classes (i.c.c.), i.e. $\Gamma$ is infinite and all of its conjugacy classes, except for the trivial one, are infinite. In this case, $\rl\Gamma$ is a II$_1$ factor, i.e. an infinite dimensional von Neumann algebra that has trivial center and admits a positive trace.

One of the main problems in the theory of von Neumann algebras is to classify the group factors $\rl\Gamma$ in terms of the group $\Gamma$. More precisely, we are interested in answering the following question: does the group factor $\rl\Gamma$ \emph{remember} the group $\Gamma$? This natural question leads to two important concepts: \emph{softness}, when $\rl\Gamma$ does not remember the group $\Gamma$, and \emph{rigidity}, when
$\rl\Gamma$ completely remembers the group $\Gamma$. There is a long list of examples of groups that are soft. The celebrated theorem of Connes \cite{Co76} says that all group II$_1$ factors arising from i.c.c. amenable groups are isomorphic to the hyperfinite II$_1$ factor. This shows that amenable groups manifest a remarkable softness: all the algebraic properties of the group, except their amenability, are lost when we pass to the group von Neumann algebra. In \cite{Dy93}, Dykema proved that for $\Gamma_1,\ldots,\Gamma_n$ infinite amenable groups, $n\ge 2$, the
group von Neumann algebra of their free product $\rl(\Gamma_1\ast\ldots\ast\Gamma_n)$ is isomorphic to the free group factor $\rl\mathbb{F}_n$. Ioana and Bowen obtained the first results saying that plain wreath products tend to be soft, namely all $\rl(\mathbb{F}_n\wr\zz)$, for $n\ge 2$, are isomorphic \cite{Io06} and all $\rl(H\wr\mathbb{F}_2)$, for $H$ non-trivial abelian, are isomorphic \cite{Bo09a},\cite{Bo09b}. Moreover, in \cite{IPV10}, Ioana, Popa and Vaes proved that there exist infinitely many non-isomorphic countable groups $\Lambda$ such that $\rl\Lambda$ is
isomorphic to $\rl(\zz/2\zz\wr \psl(n,\zz))$, for $n\ge 2$.

On the other hand, it is a famous open problem whether the free group factors $\rl\mathbb{F}_n$, with $n \ge 2$, are isomorphic or not. Another big open problem is \emph{Connes' rigidity conjecture}. In \cite{Co80a},\cite{Co80b}, Connes asked whether two i.c.c. property (T) groups $\Gamma$ and
$\Lambda$, with isomorphic group von Neumann algebras $\rl\Gamma \cong \rl \Lambda$, must necessarily be isomorphic. This conjecture remains wide open, even for classical groups like $\psl(n,\zz)$, with $n\ge 3$. Remark, however, that by \cite{CH89}, whenever $\Gamma$ and $\Lambda$ are lattices in $\Sp(n, 1)$, respectively $\Sp(m, 1)$, the isomorphism $\rl\Gamma\cong\rl\Lambda$ implies that $n = m$.

In \cite{IPV10}, Ioana, Popa and Vaes established the first \emph{W$^\ast$-superrigidity} theorem for group von Neumann algebras: for a large class of generalized wreath product groups $G = (\zz / 2\zz)^{(I)} \rtimes \Gamma$, it was shown that if $\rl G \cong \rl \Lambda$, for an arbitrary countable group $\Lambda$, then $G$ must be isomorphic with $\Lambda$. Such a group $G$ is said to be \emph{W$^\ast$-superrigid} (see Definition \ref{def.Wstar-superrigid}), and in this case the group von Neumann algebra $\rl G$ completely remembers $G$.

Following the same strategy as in \cite{IPV10}, we have proven in \cite{BV12} that the more natural \emph{left-right wreath product} groups $\g = (\zz / 2\zz)^{(\Gamma)} \rtimes (\Gamma \times \Gamma)$ are W$^\ast$-superrigid, where the direct product $\Gamma \times \Gamma$ acts on $\Gamma$ by left-right multiplication, and where $\Gamma$ is either the free group $\mathbb{F}_n$, with $n \geq 2$, or any i.c.c. hyperbolic group, or any non-trivial free product $\Gamma_1 \ast \Gamma_2$.

In this paper, we enlarge the class of groups covered by \cite{BV12}, proving that the left-right wreath product $\g = (\zz / 2\zz)^{(\Gamma)} \rtimes (\Gamma \times \Gamma)$ is W$^\ast$-superrigid whenever $\Gamma$ belongs to a certain class of groups with positive first $\ell^2$-Betti number or it is a certain non-trivial amalgamated free product or HNN extension (see Theorem \ref{thm.main-intro}).

In order to state our main theorem, we first need to introduce a few notions. Recall from \cite{CH89} that a countable group $\Gamma$ is said to be \emph{weakly amenable} if it admits a sequence of finitely supported functions $\fy_n : \Gamma \to \cz$ tending to $1$ pointwise and satisfying $\sup_n \norm{\fy_n}\cb < \infty$, where $\norm{\fy}\cb$ denotes the Herz-Schur norm of $\fy$ (i.e. the cb-norm of the linear map $\rl\Gamma \ni u_g \mapsto \fy(g) u_g \in \rl\Gamma$).

If $\Gamma$ is a countable group and $\pi:\Gamma\to \mathcal{O}(\kk_\rz)$ is an orthogonal representation of $\Gamma$ on a real Hilbert space $\kk_\rz$, then a \emph{1-cocycle} $c$ into $\pi$ is a map $c:\Gamma\to \kk_\rz$ satisfying the following 1-cocycle relation: $$c(gh)=c(g)+\pi(g)c(h), \rm{ for all }g,h \in \Gamma.$$

A subgroup $\Sigma<\Gamma$ is called \emph{malnormal} if $\Sigma\cap g\Sigma g^{-1}=\{1\}$, for all $g\in \Gamma\setminus\Sigma$.  A subgroup $\Sigma<\Gamma$ is said to be \emph{relatively malnormal} if there exists an infinite index subgroup $\Lambda<\Gamma$ such that $\Sigma\cap g\Sigma g^{-1}$ is finite, for all $g\in\Gamma\setminus\Lambda$. If $\{\Sigma_i\}_{i\in I}$ is a family of subgroups of $\Gamma$, then we say that $\{\Sigma_i\}_{i\in I}$ is \emph{malnormal} in $\Gamma$ if $g\Sigma_i g^{-1}\cap \Sigma_j=\{1\}$, unless $i=j$ and $g\in \Sigma_i$.

If $\Gamma$ is a countable group, $\Sigma<\Gamma$ is a subgroup and $\theta:\Sigma\to\Gamma$ is an injective group homomorphism, then the HNN extension HNN$(\Gamma,\Sigma,\theta)$ is the group generated by a copy of $\Gamma$ and an extra generator $t$, called stable letter, subject to relations $t g t^{-1}=\theta(g)$, for all $g\in \Sigma$. We say that HNN$(\Gamma,\Sigma,\theta)$ is \emph{non-degenerate} if $\Sigma\neq\Gamma\neq\theta(\Sigma)$. Note that, in this case, HNN$(\Gamma,\Sigma,\theta)$ contains a copy of the free group on two generators, hence it is non-amenable. In the same spirit, we say that an amalgamated free product $\Gamma=\Gamma_1\ast_\Sigma\Gamma_2$ is \emph{non-degenerate} if
$[\Gamma_1:\Sigma]\ge 2$ and $[\Gamma_2:\Sigma]\ge 3$, and this is sufficient to witness the non-amenability of $\Gamma$.

The group von Neumann algebra of an HNN extension HNN$(\Gamma,\Sigma,\theta)$ is precisely the HNN extension of von Neumann algebras HNN$(\rl\Gamma, \rl\Sigma,\Theta)$, associated to the triple $(\rl\Gamma, \rl\Sigma,\Theta)$, where $\Theta$ is the trace-preserving embedding $\rl\Sigma\to\rl\Gamma$ induced by $\theta$. For more details about HNN extensions of von Neumann algebras see \cite{Ue05} and \cite[Section 3]{FV10}.

\begin{definition}\label{def.Wstar-superrigid}
A countable group $\mathcal{G}$ is said to be \emph{W$^\ast$-superrigid} if for any countable group $\Lambda$ such that $\pi : \rl \Lambda \to \rl \mathcal{G}$ is a $\ast$-isomorphism, there exist a group isomorphism $\delta : \Lambda \to \mathcal{G}$, a character $\omega : \Lambda \to \T$ and a unitary $w \in \uu(\rl \mathcal{G})$ such that $$\pi(v_s) = \omega(s) \, w \, u_{\delta(s)} \, w^\ast \quad\text{for all}\;\; s \in \Lambda \; ,$$
where $(v_s)_{s \in \Lambda}$ and $(u_g)_{g \in \mathcal{G}}$ denote the canonical generating unitaries of $\rl \Lambda$, respectively
$\rl \mathcal{G}$.
\end{definition}

We are now ready to state the main theorem of the paper.

\begin{theorem}\label{thm.main-intro}
Assume that $\Gamma$ is one of the following countable groups:

\begin{enumerate}
\item a non-degenerate amalgamated free product $\ds\Gamma_1\ast_\Sigma\Gamma_2$, with $\Sigma$ malnormal in $\Gamma_1$;

\item a non-degenerate HNN extension $\rm{HNN}(\Gamma_0,\Sigma,\theta)$, with $\{\Sigma, \te(\Sigma)\}$ malnormal in $\Gamma_0$;

\item an i.c.c. weakly amenable group with positive first $\ell^2$-Betti number that admits a bound on the order of its finite subgroups.
\end{enumerate}

Consider the action of $\Gamma \times \Gamma$ on $\Gamma$ by left-right multiplication. Then the left-right wreath product group $\mathcal{G}=(\zz/n\zz)^{(\Gamma)} \rtimes (\Gamma \times \Gamma)$, with $n\in\{2,3\}$, is W$^\ast$-superrigid in the sense of Definition \ref{def.Wstar-superrigid}.
\end{theorem}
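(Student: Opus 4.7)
The plan is to follow the general blueprint for W$^*$-superrigidity of wreath products developed in Ioana-Popa-Vaes \cite{IPV10} and extended to left-right wreath products in the author's joint work \cite{BV12}. Fix an arbitrary $\ast$-isomorphism $\pi : \rl\Lambda \to \rl\mathcal{G}$ and consider the canonical co-multiplication $\Delta : \rl\mathcal{G} \to \rl\mathcal{G}\tens\rl\mathcal{G}$ defined on generators by $\Delta(u_g)=u_g\otimes u_g$. The $\mathcal{G}$-valued unitaries inside $\rl\mathcal{G}$ are characterized, up to scalars, as the \emph{group-like elements} $\{v\in\uu(\rl\mathcal{G}):\Delta(v)=v\otimes v\}$, so W$^*$-superrigidity is equivalent to showing that every $\pi(v_s)$, $s\in\Lambda$, becomes a scalar multiple of a group-like element after a single unitary conjugation. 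The whole work therefore reduces to analysing the embedding $\Delta\circ\pi:\rl\Lambda\to\rl\mathcal{G}\tens\rl\mathcal{G}$ and proving that it is unitarily conjugate, up to a character, to $(\pi\otimes\pi)\circ\Delta_\Lambda$.

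The crucial intermediate step is the uniqueness, up to unitary conjugacy in $\rl\mathcal{G}$, of the canonical group measure space subalgebra $A=L\bigl(\bigoplus_\Gamma\zz/n\zz\bigr)$ inside $\rl\mathcal{G}=A\rtimes(\Gamma\times\Gamma)$, as seen through the isomorphism $\pi$. This is obtained by combining Popa's intertwining-by-bimodules technique with rigidity input tailored to each class in the statement. For class (3), weak amenability of $\Gamma\times\Gamma$ (inherited from $\Gamma$) supplies an Ozawa-Popa type weak compactness of arbitrary maximal amenable subalgebras, while positive first $\ell^2$-Betti number provides an unbounded $1$-cocycle into the regular representation of $\Gamma\times\Gamma$, yielding a Peterson-type closable derivation and a corresponding malleable deformation of $\rl\mathcal{G}$. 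The bound on the order of finite subgroups is fed into Peterson's argument precisely to kill pathological fixed subalgebras; the two inputs combine to force every candidate copy of $A$ to intertwine into the canonical one. For classes (1) and (2), one invokes instead the Ioana-Peterson-Popa \cite{IPP05} dichotomy and its HNN analogue: the malnormality of $\Sigma$, resp.\ of $\{\Sigma,\theta(\Sigma)\}$, is exactly what prevents an intertwining subalgebra from spreading across both vertex algebras, again forcing it inside $A$.

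Once the uniqueness of $A$ is secured, $\pi$ lifts to an isomorphism of the crossed product decompositions, and the problem reduces to cocycle superrigidity for the left-right Bernoulli-type action $\Gamma\times\Gamma\act(\zz/n\zz)^{(\Gamma)}$. Exactly as in \cite{BV12,IPV10}, one then uses the product structure of $\Gamma\times\Gamma$ together with the specific combinatorics of the conjugation action of $\Gamma$ on itself to produce the character $\omega$, the group isomorphism $\delta$ and the unitary $w$ of Definition \ref{def.Wstar-superrigid}. The main obstacle, and the technical heart of the paper, is expected to be the uniqueness of $A$ in class (3): unlike the hyperbolic or free product setting of \cite{BV12}, the geometric input is here replaced by the interaction between a derivation coming from the $\ell^2$-Betti assumption and a weak-compactness averaging coming from weak amenability, and one must control \emph{every} would-be copy of $A$ simultaneously under a single malleable deformation, with the bound on finite subgroups serving to eliminate the residual fixed points of that deformation.
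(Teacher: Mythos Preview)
Your outline has the right high-level flavour (comultiplication, intertwining-by-bimodules, deformation/rigidity input adapted to each class), but it misidentifies both the comultiplication that is used and the crucial intermediate step, and this second point is a genuine gap.

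First, the comultiplication that drives the whole argument is the \emph{$\Lambda$-comultiplication} $\Delta:\rl\Lambda\to\rl\Lambda\tens\rl\Lambda$, $\Delta(v_s)=v_s\otimes v_s$, transported to $M=\rl\mathcal{G}$ via $\pi$; one then studies how this \emph{unknown} $\Delta$ interacts with the \emph{known} subalgebras $A$ and $\rl(\Gamma\times\Gamma)$. Your formulation takes the $\mathcal{G}$-comultiplication and tries to test the unknown unitaries $\pi(v_s)$ against it, which is the dual picture but not the one that is actually tractable.

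More seriously, the paper never proves, and does not need, any statement of the form ``$A$ is the unique Cartan (or group measure space) subalgebra of $M$ up to unitary conjugacy''. Your plan to deduce from such uniqueness that ``$\pi$ lifts to an isomorphism of the crossed product decompositions'' and then invoke cocycle superrigidity cannot work as stated: for an arbitrary countable group $\Lambda$ there is no crossed product decomposition of $\rl\Lambda$ to lift to. Even if $M$ had a unique Cartan, this would say nothing about $\Lambda$ having a normal abelian subgroup whose group algebra is that Cartan. The actual intermediate step consists of the three concrete statements
\[
\Delta(A)\prec^f A\tens A,\qquad \Delta(A)'\cap(M\tens M)\prec^f A\tens A,\qquad \Omega\,\Delta(\rl(\Gamma\times\Gamma))\,\Omega^\ast\subset \rl(\Gamma\times\Gamma)\tens\rl(\Gamma\times\Gamma),
\]
for some unitary $\Omega$; once these hold one can quote \cite[Theorem~8.1 and Theorem~B]{BV12} verbatim in the special case $H_0=H$, and the fact that $|H|\in\{2,3\}$ is what makes every automorphism of $\rl H$ group-like. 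Finally, two technical roles are misattributed: in class~(3) weak amenability enters through \cite[Theorem~3.1]{PV11} (a dichotomy for normalizers of relatively amenable subalgebras), not through weak compactness of maximal amenable subalgebras; and the bound on the order of finite subgroups is used to control the \emph{stabilizers} of the left-right action $\Gamma\times\Gamma\curvearrowright\Gamma$ (so that the tensor length deformation of Theorem~\ref{thm: spectral gap} applies with a uniform $\kappa$), not to eliminate fixed points of the Peterson/Gaussian deformation.
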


By \cite{BV97}, \cite{PT07}, a countable group $\Gamma$ has positive first $\ell^2$-Betti number if and only if it is non-amenable and it admits an unbounded 1-cocycle into the left regular representation. Actually, throughout this paper we will only use this characterization of having positive first $\ell^2$-Betti number, without defining explicitly $\ell^2$-Betti numbers for countable groups. In \cite[Section 3]{PT07}, there are given many examples of countable groups $\Gamma$ with positive first $\ell^2$-Betti number, such as certain amalgamated free products, certain HNN extensions, hyperbolic triangle groups, limit groups of Sela, etc. Moreover, \cite[Theorem 3.2]{PT07} provides a very useful formula for estimating from below the first $\ell^2$-Betti number of a group defined by (a finite number of) generators and relations.

It is known that all Coxeter groups are weakly amenable \cite{Ja98}, \cite{Val93}. Using \cite[Theorem 3.2]{PT07} one can construct Coxeter groups with positive first $\ell^2$-Betti number and which are not hyperbolic (for details, see \cite{KN11}). Remark that such groups provide examples of groups that satisfy the third set of assumptions in Theorem \ref{thm.main-intro} and that are not covered by \cite[Theorem B]{BV12}.

\subsection*{Structure of the proof}

Let $\Gamma$ be a countable group satisfying one set of assumptions of Theorem \ref{thm.main-intro}. Denote \linebreak $H:=\zz/n\zz$, with $n\in\{2,3\}$, and consider the wreath product $\mathcal{G}:=H^{(\Gamma)} \rtimes (\Gamma \times \Gamma)$, where $\Gamma \times \Gamma$ acts on $\Gamma$ by left-right multiplication.  We want to prove that $\mathcal{G}$ is W$^\ast$-superrigid in the sense of Definition \ref{def.Wstar-superrigid}. Put $M:=\rl\mathcal{G}$ and assume that $\Lambda$ is an arbitrary countable group such that $M\cong \rl\Lambda$.

The proof follows exactly the same strategy as in \cite{IPV10} and \cite{BV12} and uses many results of these two papers. To describe more precisely this strategy, consider the comultiplication $\Delta:\rl\Lambda\to \rl\Lambda\tens \rl\Lambda$, defined by $\Delta(v_s)=v_s\otimes v_s$, for all $s\in\Lambda$, associated to the group von Neumann algebra decomposition $M\cong \rl\Lambda$. We write $A:=\rl H^{(\Gamma)}$ and $G:=\Gamma\times\Gamma$, so that $M=A\rtimes G$.

Under these assumptions, we prove that the following three statements are true:
\begin{equation}\label{main relations}\left.\begin{array}{ll} & \Delta(A)\prec^f A\tens A,\\\\
& \Delta(A)^\pr\cap M\tens M\prec^f A\tens A,\\\\
& \Omega\Delta(\rl G)\Omega^\ast\sset \rl G\tens \rl
G,\end{array}\right.\end{equation}

for some unitary $\Omega\in\uu(M\tens M)$. Here, the notation $"\prec^f"$ refers to Popa's intertwining-by-bimodules that we introduce in Section \ref{sec.prelim}.

Having these three facts established, we can literally repeat the proof of \cite[Theorem 8.1]{BV12}, followed by the proof of \cite[Theorem B]{BV12}, in the particular case $H_0=H$. This exactly yields the conclusion of Theorem \ref{thm.main-intro}.

All these statements, as well as the final argument, are showed to be true in Section 6. The rest of the paper is organized as follows. In Section 2 we introduce several preliminary notions and prove a number of technical lemmas that we need for the proof of the main theorem. In Section 3 and Section 4 we introduce the \emph{malleable deformations}, in the sense of Popa, that we can define on our wreath product group von Neumann algebra: the \emph{tensor length deformation} coming from the wreath product structure and the \emph{Gaussian deformation} coming from the 1-cocycle into the left regular representation. In Section 5 we establish results that allow us to have good control on the normalizer of relatively amenable subalgebras.

\section{Preliminaries}\label{sec.prelim}

\subsection{Popa's intertwining-by-bimodules}

Let $(M,\tau)$ be a tracial von Neumann algebra. Suppose that $p$ and $q$ are non-zero projections in $M$ and that $P \sset pMp$ and $Q\sset qMq$ are von Neumann subalgebras. We recall briefly Popa's
\emph{intertwining-by-bimodules} definition/theorem.

\begin{definition}\label{def:intertwining}
We write $P\prec_M Q$ if there exists a non-zero $P$-$Q$-bimodule
$\hh\sset pL^2(M)q$ which has finite right $Q$-dimension. We write
$P\prec_M^f Q$ if $Pp^\pr\prec Q$, for all non-zero projections
$p^\pr\in P^\pr\cap pMp$. If no confusion is possible, we simply write
$P\prec Q$ and $P\prec^f Q$.
\end{definition}

\begin{theorem}[{\cite[Theorem 2.1 and Corollary 2.3]{Po03}}]\label{thm:intertwining}
Suppose that $P$ is generated by a group of unitaries
$\mathcal{G}\sset \uu(P)$. The following statements are equivalent:
\begin{itemize}
\item $P\prec_M Q$;
\item There exist a non-zero projection $q_0\in M_n(\cz)\otimes Q$, a non-zero partial isometry \linebreak $v\in M_{1,n}(\cz)\otimes pMq$
 and a normal $\ast$-homomorphism $\theta:P\to q_0(M_n(\cz)\otimes Q)q_0$ such that $$av=v\theta(a),\rm{ for all }a\in P;$$
\item There is no sequence of unitaries $(u_n)_{n\in\nz}$ in $\mathcal{G}$ such that
$$\norm{E_Q(x^* u_n y)}_2\to 0, \rm{ for all }x,y\in pMq.$$

\end{itemize}
\end{theorem}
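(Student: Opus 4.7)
The plan is to establish the equivalences (1) $\Leftrightarrow$ (2) and (2) $\Leftrightarrow$ (3) separately: (1) $\Leftrightarrow$ (2) is a dictionary between finite right $Q$-dimensional invariant subbimodules and intertwiners into amplifications of $Q$; (2) $\Rightarrow$ (3) is a direct conditional-expectation estimate; and the real content is (3) $\Rightarrow$ (2), which I would prove by contrapositive via Popa's minimization argument inside the Jones basic construction.

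For (2) $\Rightarrow$ (1), I would write $v=(v_1,\dots,v_n)$ with $v_i\in pMq$ and observe that the intertwining relation $av=v\theta(a)$ makes the right $Q$-linear span of $\{v_1,\dots,v_n\}$ inside $pL^2(M)q$ a nonzero $P$-$Q$-subbimodule of right $Q$-dimension at most $n$. Conversely, for (1) $\Rightarrow$ (2), a Pimsner--Popa basis identifies any finite right $Q$-dimensional Hilbert $Q$-module $\mathcal{H}\sset pL^2(M)q$ with $q_0(\cz^n\ot L^2(Q))$ for a projection $q_0\in M_n(\cz)\ot Q$; the commuting left action of $P$ then pulls back to a normal $*$-homomorphism $\theta:P\to q_0(M_n(\cz)\ot Q)q_0$, and the identifying partial isometry -- after polar decomposition to ensure its entries lie in $pMq$ rather than only in $pL^2(M)q$ -- provides $v$. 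For (2) $\Rightarrow$ (3), given any sequence $(u_n)\sset\mathcal{G}$, the intertwining identity yields $v^*u_n v=(v^*v)\theta(u_n)$ in $M_n(\cz)\ot M$; since $\theta(u_n)$ is a partial isometry in $M_n(\cz)\ot Q$ while $v^*v\in M_n(\cz)\ot qMq$, applying the $M_n(\cz)\ot Q$-valued conditional expectation gives an element of fixed $\|\cdot\|_2$-norm $\|E_{M_n(\cz)\ot Q}(v^*v)\|_2\neq 0$, which forces at least one matrix entry $E_Q(v_i^*u_n v_j)$ to stay $\|\cdot\|_2$-bounded below uniformly in $n$, negating the hypothetical vanishing in (3).

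For the main step (3) $\Rightarrow$ (2) I would argue the contrapositive inside the Jones basic construction $\langle M,e_Q\rangle$ with its canonical semifinite trace $\Tr$ satisfying $\Tr(e_Q)=1$. Consider the $\|\cdot\|_{2,\Tr}$-closed convex hull
\[K:=\overline{\conv}\{u(pe_Qp)u^*:u\in\mathcal{G}\}\sset p\langle M,e_Q\rangle p,\]
which is bounded in $\Tr$ and $\mathcal{G}$-invariant under conjugation. Let $T$ be its (unique by strict convexity) $\|\cdot\|_{2,\Tr}$-minimal element. Invariance forces $uTu^*=T$ for all $u\in\mathcal{G}$, hence $T$ commutes with $P$. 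If $T\neq 0$, any nonzero spectral projection $\chi_{[c,\infty)}(T)$ with $c>0$ small lies in $P'\cap p\langle M,e_Q\rangle p$ and has finite $\Tr$, and the standard correspondence between finite-$\Tr$ projections in $\langle M,e_Q\rangle$ and finite right $Q$-dimensional subbimodules of $pL^2(M)q$ produces a nonzero $P$-$Q$-subbimodule realizing (1), hence (2). Therefore if (1) fails then $T=0$, i.e.\ $0\in K$; combining the identity $\Tr(y^*u e_Q u^* x)=\tau\bigl(E_Q(u^*x)E_Q(y^*u)\bigr)$ with a diagonal selection over a countable $\|\cdot\|_2$-dense family of pairs $(x,y)\in pMq\times pMq$ extracts from convex combinations approaching $0$ a single sequence $(u_n)\sset\mathcal{G}$ along which $\|E_Q(x^*u_n y)\|_2\to 0$ for every fixed $x,y\in pMq$, contradicting (3).

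The main obstacle is this last extraction step: turning ``$0\in K$'' into an honest sequence of individual elements of $\mathcal{G}$ rather than a sequence of convex combinations. It requires a diagonal argument over a countable dense family together with the observation that if a convex combination $\sum_i\lambda_i u_i(pe_Qp)u_i^*$ is $\|\cdot\|_{2,\Tr}$-small, then a positive-measure fraction of its summands must already be small against any prescribed bounded bilinear pairing. The remaining ingredients -- boundedness of $K$ in $\Tr$, strict convexity of $\|\cdot\|_{2,\Tr}$ on the $\Tr$-finite ideal, and the projection/subbimodule dictionary -- are formal once the Hilbert-space structure on the $\Tr$-finite ideal of $\langle M,e_Q\rangle$ is set up.
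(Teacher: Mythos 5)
The paper itself offers no proof of this statement --- it is quoted directly from [Po03, Theorem 2.1 and Corollary 2.3] --- so the only question is whether your reconstruction of Popa's argument is sound. Your architecture is exactly Popa's original one, and the parts (1)$\Leftrightarrow$(2) and (2)$\Rightarrow$(3) are correct as sketched (the polar-decomposition upgrade from $L^2$-vectors to a partial isometry in $M_{1,n}(\cz)\ot pMq$, and the identity $E_{M_n(\cz)\ot Q}(v^*u_nv)=E_{M_n(\cz)\ot Q}(v^*v)\theta(u_n)$ with $\theta(u_n)$ a unitary in the corner $q_0(M_n(\cz)\ot Q)q_0$, are both standard and work).

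The gap is precisely where you locate it, and your proposed repair does not work. The quantity to be controlled is
\begin{equation*}
\norm{E_Q(x^*uy)}_2^2=\Tr\bigl(xe_Qx^*\cdot u(ye_Qy^*)u^*\bigr),
\end{equation*}
which is a linear functional of $u(ye_Qy^*)u^*$, \emph{not} of $u(pe_Qp)u^*$: the element $y$ sits inside the conjugation by $u$. Hence $0\in K=\overline{\conv}\{u(pe_Qp)u^*\}$ only lets you extract a sequence with $\norm{E_Q(x^*u_n)}_2\to0$, i.e.\ the case $y=p$, and no ``prescribed bounded bilinear pairing'' against elements of $K$ sees a general $y$. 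Nor can you bootstrap from $y=p$ to all of $pMq$, since $E_Q$ is only a $Q$-bimodule map and $pMq$ is not the right $Q$-module generated by $p$. The standard fix is to run one minimization per finite set: first reduce, via $\norm{E_Q(x^*uy)-E_Q(x'^*uy')}_2\le\norm{x-x'}_2\,\norm{y}+\norm{x'}\,\norm{y-y'}_2$, to $x,y$ ranging over a countable $\norm{\cdot}_2$-dense, operator-norm-bounded family; then argue the contrapositive of $\neg(1)\Rightarrow\neg(3)$. If no sequence as in (3) exists, a diagonal argument produces a finite set $F\sset pMq$ and $\eps>0$ with $\sum_{x,y\in F}\norm{E_Q(x^*uy)}_2^2=\Tr(T_F\,uT_Fu^*)\ge\eps^2$ for all $u\in\mathcal{G}$, where $T_F:=\sum_{y\in F}ye_Qy^*$ is positive with $\Tr(T_F)<\infty$. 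Since $\langle uT_Fu^*,vT_Fv^*\rangle_{\Tr}=\Tr(T_F(u^*v)T_F(u^*v)^*)\ge\eps^2$ for $u,v\in\mathcal{G}$, every convex combination of the $uT_Fu^*$ has $\norm{\cdot}_{2,\Tr}\ge\eps$, so the unique minimal-norm element $T$ of $\overline{\conv}\{uT_Fu^*:u\in\mathcal{G}\}$ is nonzero; it is $\mathcal{G}$-invariant, hence commutes with $P$, and its spectral projections $\chi_{[c,\infty)}(T)$ have finite $\Tr$ and yield the bimodule in (1). In short: replace $pe_Qp$ by $T_F$ and prove nonvanishing of the minimizer, rather than trying to extract a sequence from $0\in K$.
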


The next lemma is essentially a variant of \cite[Theorem A.1]{Po01}, but we give a complete proof.
\begin{lemma}\label{lem.intert-masa}
Let $M$ be a type II$_1$ factor and $A\subset M$ be a Cartan subalgebra. Let $B\subset M$ be an abelian subalgebra and $\mathcal{G}<\mathcal{N}_M(B)$ be a subgroup such that
\begin{itemize}
\item $B^\prime\cap M\prec A$,
\item the adjoint action of $\mathcal{G}$ on $Z(B^\prime\cap M)$ is ergodic.
\end{itemize}

Then there exist a projection $p\in A$ and an element $v\in \M_{1,n}(\mathbb{C})\otimes Mp$ such that $vv^*=1$, $v^*v=1\otimes p$ and $v^*(B^\pr\cap M)v=\M_n(\cz)\otimes Ap$.
\end{lemma}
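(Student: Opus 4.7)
The plan is to strengthen Popa's intertwining $B^\pr \cap M \prec A$ to the explicit unitary conjugacy claimed, using the ergodicity of the $\mathcal{G}$-action on $Z(B^\pr \cap M)$ in two crucial ways: first, to saturate the left support of an initial partial intertwiner up to $1$, and second, to force $B^\pr \cap M$ to be of homogeneous type $I_n$ for a single finite $n$.

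First, I would apply Theorem \ref{thm:intertwining} to $B^\pr \cap M \prec A$, producing an integer $n_0$, a partial isometry $v_0 \in \M_{1,n_0}(\cz) \otimes M$ with $v_0^* v_0 \in \M_{n_0}(\cz) \otimes A$, and a normal $*$-homomorphism $\theta_0 : B^\pr \cap M \to v_0^* v_0 (\M_{n_0}(\cz) \otimes A) v_0^* v_0$ satisfying $x v_0 = v_0 \theta_0(x)$ for all $x \in B^\pr \cap M$. Since $B \subset B^\pr \cap M$, the relative commutant $(B^\pr \cap M)^\pr \cap M$ is contained in $B^\pr \cap M$ and therefore equals $Z(B^\pr \cap M)$; the intertwining forces $p_0 := v_0 v_0^* \in Z(B^\pr \cap M)$, and by ergodicity $\bigvee_{g \in \mathcal{G}} u_g p_0 u_g^* = 1$.

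Next, I would use ergodicity a second time to force homogeneity. The central projection $z_k \in Z(B^\pr \cap M)$ supporting the type $I_k$ summand of $B^\pr \cap M$ is preserved by every automorphism, hence is $\mathcal{G}$-invariant and therefore scalar. On each translate $u_g p_0 u_g^*$, the algebra $(B^\pr \cap M)(u_g p_0 u_g^*)$ embeds into $v_0^* v_0 (\M_{n_0}(\cz) \otimes A) v_0^* v_0$ via $u_g v_0$, hence has homogeneous type at most $n_0$; combined with $\bigvee_g u_g p_0 u_g^* = 1$, this forces $z_k = 0$ for every $k > n_0$, so exactly one $z_n = 1$ with $n \in \{1, \dots, n_0\}$. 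Consequently $B^\pr \cap M \cong \M_n(\cz) \otimes Z$, with $Z := Z(B^\pr \cap M)$, and one may choose matrix units $\{e_{ij}\}_{i,j=1}^n \subset B^\pr \cap M$ with $\sum_i e_{ii} = 1$, $e_{1i} e_{i1} = e_{11}$ for each $i$, and $e_{11}(B^\pr \cap M) e_{11} = Z e_{11}$.

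The final, and hardest, step is to identify $Z e_{11}$ with a corner of $A$ and assemble $v$. The intertwining restricts to $Z \prec A$; combined with $\mathcal{G}$-ergodicity on $Z$ and the Cartan property of $A$, the argument of \cite[Theorem A.1]{Po01} (adapted to the present context) should produce a partial isometry $w \in M$ with $w^* w = e_{11}$, $w w^* =: p \in A$, and $w (e_{11}(B^\pr \cap M) e_{11}) w^* = Ap$. Setting $v_j := e_{j1} w^* \in Mp$ and $v := \sum_{j=1}^n e_{1j} \otimes v_j \in \M_{1,n}(\cz) \otimes Mp$, the relations $vv^* = 1$, $v^* v = 1_n \otimes p$, and $v^*(B^\pr \cap M) v = \M_n(\cz) \otimes Ap$ follow from a direct matrix-unit calculation using $w^* w = e_{11}$, $w w^* = p$, and $w z w^* \in Ap$ for $z \in Z$. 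The main obstacle is this construction of $w$: turning the abstract intertwining $Z \prec A$ into a genuine conjugacy inside $M$, which is delicate because $Z$ is in general not a MASA in $M$; one likely has to work with the ``diagonal'' MASA $D := \cz^n \otimes Z \subset B^\pr \cap M$ and exploit the Cartan property of $A$ together with $\mathcal{G}$-ergodicity on $Z$ to ensure the image is a single corner $Ap$ rather than a direct sum of corners.
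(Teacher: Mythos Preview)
Your overall strategy matches the paper's, but you have overcomplicated the final step and in doing so missed the one-line observation that dissolves your ``main obstacle''. The point is that $Ze_{11}$ \emph{is} a MASA in $e_{11}Me_{11}$. Indeed, since $B$ is abelian we have $B\subset Z(B'\cap M)=Z$; hence $Be_{11}\subset Ze_{11}$, and therefore
\[
(Ze_{11})'\cap e_{11}Me_{11}\subset (Be_{11})'\cap e_{11}Me_{11}=e_{11}(B'\cap M)e_{11}=Ze_{11}.
\]
Moreover the normalizer of $Ze_{11}$ in $e_{11}Me_{11}$ acts ergodically: for each $g\in\mathcal G$, the projection $u_ge_{11}u_g^*$ is an abelian projection in $B'\cap M\cong \M_n(\cz)\otimes Z$ of central support $1$, hence there is a partial isometry $w_g\in B'\cap M$ with $w_g^*w_g=e_{11}$ and $w_gw_g^*=u_ge_{11}u_g^*$; then $w_g^*u_g\in\mathcal U(e_{11}Me_{11})$ normalizes $Ze_{11}$ and implements the $\mathcal G$-action on $Z$. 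With these two facts in hand, \cite[Theorem~A.1]{Po01} applies directly to the MASA $Ze_{11}\subset e_{11}Me_{11}$ and the Cartan subalgebra $A\subset M$, producing the partial isometry $w$ (the paper's $v_0$) you are looking for; there is no need to pass to the diagonal MASA $\cz^n\otimes Z$.

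Two smaller remarks. First, your initial ``saturation'' step is unnecessary: from $B'\cap M\prec A$ one gets directly that $B'\cap M$ has a nonzero type~I summand, and ergodicity of $\mathcal G$ on $Z$ then forces $B'\cap M$ to be homogeneous of type $I_n$ for a single finite $n$ (the type~$I_k$ central projections are $\mathcal G$-invariant, and the type~II part is ruled out by the existence of the type~I summand). Second, once you have $w$ with $w^*w=e_{11}$, $ww^*=p\in A$ and $w(Ze_{11})w^*=Ap$, your formula for $v$ is exactly the paper's: $v=\sum_{k=1}^n e_{1k}\otimes e_{1k}w^*$.
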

\begin{proof}
Since $B^\pr\cap M\prec A$, the von Neumann algebra $B^\pr\cap M$ has a type I direct summand. Since the adjoint action of $\g$ on $Z(B^\pr\cap M)$ is ergodic, we find an integer $n\ge 1$ such that $B^\pr\cap M=\M_n(\cz)\otimes Z(B^\pr\cap M)$. So, we may take a system of matrix units $(e_{ij})_{1\le i,j\le n}$ in $B^\pr\cap M$ with $e:=e_{11}$ satisfying $e(B^\pr\cap M)e=Z(B^\pr\cap M)e$. By construction, $Z(B^\pr\cap M)e$ is a maximal abelian subalgebra of $eMe$, whose normalizer acts ergodically on $Z(B^\pr\cap M)e$.

Since $B^\pr\cap M=\M_n(\cz)\otimes Z(B^\pr\cap M)$ and $B^\pr\cap M\prec A$, it follows that $Z(B^\pr\cap M)e\prec A$ and hence, by \cite[Theorem A.1]{Po01}, there exist a projection $p\in A$ and $v_0\in \M_{n,1}(\cz)\otimes Mp$ such that $v_0v_0^*=e$, $v_0^*v_0= p$ and $v_0^*(B^\pr\cap M)v_0=Ap$. Define $v\in \M_{1,n}(\cz)\otimes Mp$ by $v=\ds\sum_{k=1}^{n}{e_{1k}\otimes e_{1k}v_0}$. Then one checks easily that $vv^*=1$, $v^*v=1\otimes p$ and $v^*(B^\pr\cap M)v=\M_n(\cz)\otimes Ap$.

\end{proof}

\subsection{Jones' basic construction}

Let $(M,\tau)$ be a tracial von Neumann algebra and $Q\sset M$ be a von Neumann subalgebra. The \emph{Jones' basic construction} for the inclusion $Q\sset M$ is defined as the von Neumann algebra $\<M,e_Q\>$ generated by $M$ and the orthogonal projection $e_Q:\rl^2( M)\to\rl^2(Q)$.

We list now the main properties of the basic construction. Denote by $Me_QM$ the linear span of the set $\{xe_Qy\mid x,y\in M\}$.

If $Q$ is a von Neumann subalgebra of a tracial von Neumann algebra $(M,\tau)$, then the basic construction $\<M,e_Q\>$ is a semifinite von Neumann algebra, with a faithful normal semifinite trace $\Tr$, satisfying the following properties:
\begin{itemize}
\item  $\<M,e_Q\>$ equals the commutant of the right action of $Q$ on $\rl^2(M)$ and the $\ast$-subalgebra $Me_QM$ is weakly dense in $\<M,e_Q\>$;
\item $\Tr(xe_Qy)=\tau(xy)$, for all $x,y\in M$;
\item $e_Q x e_Q=E_Q(x)e_Q=e_Q E_Q(x)$, for all $x\in M$;
\item the central support of $e_Q$ in $\<M,e_Q\>$ is 1;
\item $Me_QM$ is dense in $\rl^2(\<M,e_Q\>)$ in $\norm{\cdot}_{2,\Tr}$-norm.
\end{itemize}

Part of these properties characterize the basic construction, as in the following well-known result (see e.g. \cite[Theorem 3.3.15]{SS08}).

\begin{theorem}\label{lem.basic-cons}
Let $N$ be a semifinite von Neumann algebra with a faithful normal semifinite trace $\Tr$ and von Neumann subalgebras $Q\sset M\sset N$. Assume that $e\in N$ is a projection such that
\begin{enumerate}
\item $N$ is the weak closure of the $\ast$-subalgebra $MeM$;
\item $\Tr(e)=1$ and $\tau(x):=\Tr(xe)$ defines a faithful normal trace $\tau$ on $M$;
\item $eNe=Qe=eQ$;
\end{enumerate}
Then there is a trace-preserving $\ast$-isomorphism $\theta:\<M,e_Q\>\to N$ with $\theta(x)=x$, for all $x\in M$, and $\theta(e_Q)=e$.
\end{theorem}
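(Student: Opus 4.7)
The plan is to build $\theta$ spatially: produce a unitary $U:\rl^2(\<M,e_Q\>,\Tr_0)\to\rl^2(N,\Tr)$ intertwining the left $M$-actions and carrying the Jones projection $e_Q$ to $e$; then $\theta:=\ad U$ will be the required isomorphism. The first step is to show $e$ commutes with $Q$. The maps $q\mapsto qe$ and $q\mapsto eq$ from $Q$ to $eNe$ are both injective (if $qe=0$ then $\tau(q^\ast q)=\Tr(q^\ast qe)=0$, and $\tau$ is faithful on $M\supset Q$), so the equalities $eNe=Qe=eQ$ provide bijections $\alpha,\beta:Q\to Q$ satisfying $qe=e\alpha(q)$ and $eq=\beta(q)e$. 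Computing $eqe$ in two ways with $e^2=e$ gives $\beta(q)e=e\alpha(q)=\beta(\alpha(q))e$, hence $\alpha=\id$ by injectivity, and then $\beta=\id$ follows from the adjoint relation. Since $eMe\subset eNe=Qe$, this defines $E_Q:M\to Q$ via $exe=E_Q(x)e$, and the usual checks (positivity and normality of $x\mapsto exe$, together with $\tau(E_Q(x))=\Tr(E_Q(x)e)=\Tr(exe)=\tau(x)$) show that $E_Q$ is a normal $\tau$-preserving conditional expectation.

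Next I would construct $U$. For any finite sum in $Me_QM$, using $e_Qxe_Q=E_Q(x)e_Q$, the formula $\Tr_0(ae_Qb)=\tau(ab)$, and trace cyclicity, one computes
\begin{equation*}
\Bigl\|\sum_i x_ie_Qy_i\Bigr\|_{2,\Tr_0}^2=\sum_{i,j}\tau\bigl(y_jy_i^\ast E_Q(x_i^\ast x_j)\bigr).
\end{equation*}
The analogous computation in $N$, using $exe=E_Q(x)e$ and $\Tr(\cdot\,e)=\tau(\cdot)$ on $M$, yields the same number for $\|\sum_i x_iey_i\|_{2,\Tr}^2$. Hence $xe_Qy\mapsto xey$ is isometric on the dense subspace $Me_QM$ and extends to an isometry $U:\rl^2(\<M,e_Q\>)\to\rl^2(N,\Tr)$ whose image is the $\|\cdot\|_{2,\Tr}$-closure of $MeM$.

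The main remaining step, and the principal technical obstacle, is to show $U$ is surjective, equivalently that $MeM$ is $\|\cdot\|_{2,\Tr}$-dense in $\rl^2(N,\Tr)$. Hypothesis (1) gives only weak density of $MeM$ in $N$; the promotion to $\rl^2$-density is obtained by restricting to finite corners $pNp$ (where Kaplansky density combined with the finite trace upgrades the weak closure of a convex set to its $\|\cdot\|_2$-closure) and exhausting $N$ by such corners, using semifiniteness of $\Tr$ together with the fact that $e$ has central support $1$ in $N$ (a consequence of the weak density of $MeM$). Once $U$ is known to be unitary, $\theta(T):=UTU^\ast$ is a normal $\ast$-isomorphism $\<M,e_Q\>\to N$; it fixes $M$ since $U$ intertwines the left $M$-actions on both sides, and sends $e_Q$ to $e$ because $Ue_Q(xe_Qy)=U(E_Q(x)e_Qy)=E_Q(x)ey=e(xey)=eU(xe_Qy)$. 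Finally, the identity $\Tr(\theta(xe_Qy))=\Tr(xey)=\tau(yx)=\Tr_0(xe_Qy)$ holds on the weakly dense subalgebra $Me_QM$, and trace preservation on all of $\<M,e_Q\>$ follows by normality.
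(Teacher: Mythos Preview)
The paper does not give a proof of this theorem: it is stated as a well-known characterization of the basic construction and referred to \cite[Theorem 3.3.15]{SS08}. So there is no ``paper's proof'' to compare against; your task was to supply one, and your spatial approach via the unitary $U:xe_Qy\mapsto xey$ is the standard route and is essentially correct.

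One point deserves tightening. In Step~4 you reduce surjectivity of $U$ to the $\|\cdot\|_{2,\Tr}$-density of $MeM$ and propose to use Kaplansky on finite corners $pNp$. The difficulty is that $p(MeM)p$ is generally \emph{not} a $\ast$-subalgebra of $pNp$ (the product $(paebp)(pcedp)$ involves $bp\cdot pc\notin M$), so Kaplansky does not apply directly there. A cleaner argument is global: let $K$ be the $\rl^2$-closure of $MeM$. Since $M\cdot MeM\subset MeM$ and $e\cdot MeM\subset QeM\subset MeM$ (and symmetrically on the right), $K$ is invariant under left and right multiplication by the von Neumann algebra generated by $M$ and $e$, which is $N$ by hypothesis (1). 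Hence the orthogonal projection $P_K$ lies in $N\cap N'=Z(N)$. As $e\in MeM\subset K$ we get $e\le P_K$, and since $MeM$ (hence $NeN$) is weakly dense the central support of $e$ is $1$, forcing $P_K=1$.

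The trace-preservation at the end is also slightly underjustified: two normal semifinite faithful traces agreeing on a weakly dense $\ast$-subalgebra need a small additional argument to coincide globally. Here it is cleanest to observe that $e_Q$ has central support $1$ in $\langle M,e_Q\rangle$, and both $\Tr\circ\theta$ and $\Tr_0$ restrict to $\tau$ on the full corner $e_Q\langle M,e_Q\rangle e_Q\cong Q$; this pins down the trace uniquely. With these two clarifications your proof is complete.
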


\begin{lemma}\label{lem.basic-cons-gms}
Let $\Gamma\act (A,\tau)$ be a trace-preserving action of a countable group $\Gamma$ on a tracial von Neumann algebra $(A,\tau)$. Let $\Sigma<\Gamma$ be a subgroup and denote $M:=A\rtimes\Gamma$ and $Q:=A\rtimes\Sigma$. Then the basic construction $\<M, e_Q\>$ is isomorphic to $N=(A\tens \ell^\infty(\Gamma/\Sigma))\rtimes \Gamma$, where $\Gamma$ acts diagonally on $A\tens \ell^\infty(\Gamma/\Sigma)$.
\end{lemma}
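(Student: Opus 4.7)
The plan is to invoke Theorem \ref{lem.basic-cons} with $N=(A\tens\ell^\infty(\Gamma/\Sigma))\rtimes\Gamma$ and produce the embedding $M\hookrightarrow N$ together with the projection $e$ explicitly, so that the three abstract conditions become easy computations in the crossed-product formalism.

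First I would equip $N$ with its canonical semifinite trace. The $\Gamma$-action on $A\tens\ell^\infty(\Gamma/\Sigma)$ preserves the faithful normal semifinite tracial weight $\tau\otimes c$, where $c$ is counting measure on $\Gamma/\Sigma$, so this weight extends to a faithful normal semifinite trace $\Tr$ on $N$ via the usual formula $\Tr\bigl((b\otimes f)u_g\bigr)=\delta_{g,1}\,\tau(b)\sum_{h\Sigma}f(h\Sigma)$. I would then identify $M=A\rtimes\Gamma$ with the subalgebra of $N$ generated by $\{a\ot 1:a\in A\}$ and $\{u_g:g\in\Gamma\}$; the trace-preservation of the embedding is automatic since both sides carry the same canonical trace on $M$.

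The candidate projection is $e:=1\ot p_{\Sigma}\in N$, where $p_{g\Sigma}$ denotes the minimal projection in $\ell^\infty(\Gamma/\Sigma)$ at the coset $g\Sigma$. From $u_g(1\ot p_{\Sigma})u_g^{-1}=1\ot p_{g\Sigma}$ it follows that every element of the form $(a\ot p_{g\Sigma})u_h=(au_g)e u_{g^{-1}h}$ lies in $MeM$, and the weak closure of the span of such elements is $N$, so condition $(1)$ of Theorem \ref{lem.basic-cons} holds. Condition $(2)$ is a direct computation: $\Tr(e)=\tau(1)\cdot c(\{\Sigma\})=1$, and for $x=au_g\in M$ we have $xe=(a\ot p_{g\Sigma})u_g$, so $\Tr(xe)=\delta_{g,1}\tau(a)$, which is precisely the canonical trace of $M$. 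For condition $(3)$, using the commutation $u_g(1\ot p_{\Sigma})=(1\ot p_{g\Sigma})u_g$ and $p_{\Sigma}p_{g\Sigma}=\delta_{g\in\Sigma}\,p_{\Sigma}$, a short calculation gives $e\bigl((a\ot f)u_g\bigr)e=\delta_{g\in\Sigma}(a\ot p_\Sigma\,f(\Sigma))u_g$, from which $eNe=Qe=eQ$ is immediate.

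Once the three conditions are verified, Theorem \ref{lem.basic-cons} produces the desired trace-preserving $\ast$-isomorphism $\<M,e_Q\>\to N$ sending $e_Q$ to $e$ and fixing $M$ pointwise. The only step that requires any real care is the density statement $(1)$, because one has to first generate the full system $\{1\ot p_{g\Sigma}\}_{g\Sigma}$ by conjugating $e$ by unitaries in $M$, and then recognize that this, combined with the already-embedded copy of $M$, exhausts $N$ via the identity $(a\ot p_{g\Sigma})u_h=(au_g)e u_{g^{-1}h}$; all the remaining verifications are routine bookkeeping with the crossed-product relations.
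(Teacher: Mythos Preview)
Your proposal is correct and follows exactly the same approach as the paper: you choose the projection $e=1\ot p_\Sigma$ (the paper writes it as $1\ot\delta_{e\Sigma}$), verify the three hypotheses of Theorem~\ref{lem.basic-cons}, and conclude. The paper's own proof is essentially a terse version of yours, asserting that ``one can easily check'' the density of $MeM$ and the identity $eNe=Qe=eQ$ where you have spelled out the computations.
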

\begin{proof}

Notice that $N=(A\tens \ell^\infty(\Gamma/\Sigma))\rtimes \Gamma$ is a semifinite von Neumann algebra, with a  faithful normal semifinite trace $\Tr_N$ induced by the trace on $A\tens \ell^\infty(\Gamma/\Sigma)$, and that $Q$ and $M$ can be naturally seen as subalgebras of $N$.

Define the projection $e=1\otimes\de_{e\Sigma}\in N$.   One can easily check that $N$ is the weak closure of the *-subalgebra $MeM$ and that $eNe=Qe=eQ$. Also, we have that $\Tr_N(e)=1$ and that the restriction of $\Tr_N(\cdot e)$ to $M$ equals the trace on $M$. Therefore, by Lemma \ref{lem.basic-cons}, $N$  is isomorphic to the basic construction $\< M,e_Q \>$.
\end{proof}

\subsection{Relative amenability}

\begin{definition}[{\cite[Section 2.2]{OP07}}]
Let $(M,\tau)$ be a tracial von Neumann algebra, $p\in M$ be a
non-zero projection and let $P\subset pMp$ and $Q\subset M$ be von
Neumann subalgebras. We say that $P$ is \emph{amenable relative to} $Q$
inside $M$ if there exists a $P$-central positive functional on the
basic construction $p\langle M,e_Q\rangle p$, whose restriction to
$pMp$ equals the trace $\tau$.

Following \cite{IPV10}, we say that $P$ is \emph{strongly non-amenable
relative to} $Q$ inside $M$ if, for all non-zero projections $q\in
P^\pr\cap pMp$, we have that $Pq$ is non-amenable relative to $Q$
inside $M$.
\end{definition}

\begin{definition}
Let $(M,\tau)$ and $(N,\tau)$ be tracial von Neumann algebras. Let
$P \subset M$ be a von Neumann subalgebra. An $M$-$N$-bimodule
$\bim{M}{\hh}{N}$ is said to be \emph{left $P$-amenable} if $B(\hh) \cap
(N\op)'$ admits a $P$-central state whose restriction to $M$ equals
the trace $\tau$.
\end{definition}

For more details about relative amenability and about left
amenability for bimodules, see \cite{Si10} and \cite{PV11}. The link
between these two notions is spelt out in the following remark.

\begin{remark}\label{rem:rel. amen. - left amen. bimodule}
If $(M,\tau)$ is a tracial von Neumann algebra and $P\subset pMp$
and $Q\subset M$ are von Neumann subalgebras, then by definition,
$P$ is amenable relative to $Q$ inside $M$ if and only if the
$pMp$-$Q$-bimodule $p\rl^2(M)$ is left $P$-amenable.
\end{remark}

The following criterion for relative amenability is due to
\cite{OP07} (see also \cite[Section 2.5]{PV11}). Here we copy the
formulation of \cite[Lemma 2.10]{BV12}.

\begin{lemma}[{\cite[Corollary 2.3]{OP07}}]\label{lema:criterion for left am bimodule}
Let $(M,\tau)$ be a tracial von Neumann algebra and \linebreak $P \subset pMp$
be a von Neumann subalgebra. Let $\hh$ be a $pMp$-$M$-bimodule.
Assume that \linebreak $(\xi_i)_{i\in I} \in \hh$ is a net of vectors
satisfying the following three conditions:
\begin{itemize}
\item $\ds\limsup_{i\in I}\norm{\xi_i} >0$;
\item $\ds\limsup_{i\in I}\norm{x \xi_i} \leq \norm{x}_2$, for all $x \in
pMp$;
\item $\ds\lim_{i\in I}\norm{a \xi_i - \xi_i a} = 0$, for all $a\in P$.
\end{itemize}
Then there exists a non-zero projection $q \in P' \cap pMp$ such that
the $qMq$-$M$-bimodule $q \hh$ is left $Pq$-amenable.
\end{lemma}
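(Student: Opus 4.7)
The plan is to build a $P$-central positive functional $\omega$ on $\mathcal{N}:=B(\hh)\cap(M\op)'$ whose restriction to $pMp$ is dominated by $\tau$, and then, via a Radon--Nikodym argument, extract from $\omega$ a projection $q\in P'\cap pMp$ and a $Pq$-central functional on $q\mathcal{N}q\cong B(q\hh)\cap(M\op)'$ whose restriction to $qMq$ equals $\tau$. By Remark \ref{rem:rel. amen. - left amen. bimodule}, this is exactly what left $Pq$-amenability of $q\hh$ means.

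The first step is to define $\omega_i(T):=\<T\xi_i,\xi_i\>$ for $T\in\mathcal{N}$. Applying the second hypothesis with $x=p$ (which acts as the identity on $\hh$) gives $\norm{\xi_i}\leq\tau(p)^{1/2}$, so the net $(\omega_i)$ is uniformly bounded. By Banach--Alaoglu I pass to a weak$^\ast$ accumulation point $\omega$ along a subnet on which $\norm{\xi_i}^2$ converges to $L:=\ds\limsup_i\norm{\xi_i}^2>0$ (first hypothesis). The second hypothesis passes to the limit as $\omega(x^*x)\leq\tau(x^*x)$ for all $x\in pMp$. For the $P$-centrality of $\omega$ I use that every $T\in\mathcal{N}$ commutes with the right $M$-action to write, for $a\in P$,
\[
\omega_i(Ta)-\omega_i(aT)=\<T(a\xi_i-\xi_i a),\xi_i\>+\<T\xi_i,\xi_i a^*-a^*\xi_i\>,
\]
which is bounded by $2\norm{T}\norm{\xi_i}\norm{a\xi_i-\xi_i a}$ and tends to $0$ by the third hypothesis. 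Hence $\omega(aT)=\omega(Ta)$ in the limit.

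The key step is a Radon--Nikodym extraction on $pMp$. From Cauchy--Schwarz and the domination one gets $|\omega(y^*x)|\leq\norm{x}_2\norm{y}_2$ on $pMp$, so $\omega|_{pMp}$ extends to a continuous functional on $\rl^2(pMp,\tau)$; by Riesz there is $h\in pMp$ with $\omega(x)=\tau(xh)$ for all $x\in pMp$, and $0\leq h\leq p$ follows from the bound. The $P$-centrality $\omega(ax)=\omega(xa)$ gives $\tau(x(ah-ha))=0$ for all $x\in pMp$, $a\in P$, so $h\in P'\cap pMp$ by faithfulness of $\tau$. Moreover $\tau(h)=\omega(p)=L>0$, so $h\neq 0$.

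To finish, pick $c>0$ small enough that $q:=\chi_{[c,1]}(h)\neq 0$; then $q\in P'\cap pMp$ and $h_q:=hq$ is positive and invertible in $q(pMp)q$, with inverse square root $h_q^{-1/2}\in q(pMp)q$ still commuting with $P$. Define
\[
\widetilde{\omega}(T):=\omega\bigl(h_q^{-1/2}\,T\,h_q^{-1/2}\bigr),\qquad T\in q\mathcal{N}q.
\]
Then $\widetilde{\omega}$ is positive; a short computation using that $h_q^{-1/2}$ commutes with every $a\in P$ and that $\omega$ is $P$-central yields $\widetilde{\omega}((aq)T)=\widetilde{\omega}(T(aq))$, so $\widetilde{\omega}$ is $Pq$-central on $q\mathcal{N}q$. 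For $x\in qMq$,
\[
\widetilde{\omega}(x)=\tau\bigl(h_q^{-1/2}xh_q^{-1/2}h\bigr)=\tau\bigl(x\,h_q^{-1/2}hh_q^{-1/2}\bigr)=\tau(xq)=\tau(x),
\]
since $h_q^{-1/2}hh_q^{-1/2}=q$ inside $q(pMp)q$. Hence $\widetilde{\omega}$ is the desired $Pq$-central functional on $B(q\hh)\cap(M\op)'$ restricting to $\tau$ on $qMq$, witnessing left $Pq$-amenability of $q\hh$. The most delicate point is the Radon--Nikodym extraction in the third paragraph: because $\omega$ need not be normal on $pMp$, the density $h$ has to be constructed by hand from the $\rl^2$-bound supplied by the second hypothesis, and it is precisely this construction that places $h$ (and therefore its spectral projection $q$) into $P'\cap pMp$.
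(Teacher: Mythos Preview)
The paper does not give a proof of this lemma; it is stated with a citation to \cite[Corollary 2.3]{OP07} (see also \cite[Section 2.5]{PV11} and \cite[Lemma 2.10]{BV12}). Your argument is correct and is essentially the standard proof: build a $P$-central positive functional $\omega$ on $B(\hh)\cap(M\op)'$ as a weak$^\ast$ limit of the vector functionals $\langle\,\cdot\,\xi_i,\xi_i\rangle$, observe that $\omega|_{pMp}\leq\tau$, extract a Radon--Nikodym density $h\in P'\cap pMp$, and renormalize under a spectral projection $q$ of $h$ to obtain a $Pq$-central functional restricting exactly to $\tau$ on $qMq$.

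Two minor remarks. First, the second hypothesis only gives $\limsup_i\norm{\xi_i}\leq\norm{p}_2$, not a uniform bound, so before invoking Banach--Alaoglu you should pass to a cofinal subnet on which $\norm{\xi_i}$ is bounded; this is harmless since all three hypotheses are preserved under subnets. Second, your closing comment that ``$\omega$ need not be normal on $pMp$'' is slightly misleading: a positive functional dominated by the normal trace $\tau$ is automatically normal, so the Radon--Nikodym density exists for free. Your direct $\rl^2$/Riesz construction is of course also valid and places $h$ in $P'\cap pMp$ exactly as you say.
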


\begin{lemma}\label{lem.centrality-of-states}
Let $(M,\tau)$ be a tracial von Neumann algebra and assume that $M\sset\Mtil$, for some von Neumann algebra $\Mtil$. Let $S\sset M$ be a subset and let $\Omega$ be a positive functional on $\Mtil$ such that the restriction of $\Omega$ to $M$ is bounded by  $c\tau$, for some constant $c>0$. If $\Omega$ is $S$-central, then $\Omega$ is $S^{\bi}$-central.
\end{lemma}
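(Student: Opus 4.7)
The plan is to reduce the statement to a $\|\cdot\|_2$-density argument on $M$. Fix an arbitrary $y \in \widetilde{M}$ and consider the linear functional $f_y : M \to \mathbb{C}$ defined by
\[
f_y(x) := \Omega(xy) - \Omega(yx).
\]
Proving that $\Omega$ is $S''$-central amounts to showing that $f_y$ vanishes on $S''$ for every choice of $y$, and by linearity it is enough to establish this on the unit ball.

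First, I would establish that $f_y$ is $\|\cdot\|_2$-continuous on $M$. Since $\Omega$ is a positive functional on $\widetilde{M}$, Cauchy--Schwarz gives $|\Omega(xy)|^2 \leq \Omega(xx^*)\,\Omega(y^*y)$ and $|\Omega(yx)|^2 \leq \Omega(yy^*)\,\Omega(x^*x)$. The hypothesis $\Omega|_M \leq c\tau$ then replaces each of $\Omega(xx^*)$ and $\Omega(x^*x)$ by $c\|x\|_2^2$, giving a bound of the form $|f_y(x)| \leq C(y)\,\|x\|_2$. This is the step I expect to carry the most weight: it is the unique place where the domination by the trace enters, and it is exactly what converts the possibly non-normal functional $\Omega$ on $\widetilde{M}$ into a Hilbert-space continuous object on $L^2(M)$.

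Next, I would upgrade the given $S$-centrality of $\Omega$ to centrality with respect to the $*$-algebra $\mathcal{A}$ generated by $S$. Centrality on products is automatic: for $s_1,s_2 \in S$ and $x \in \widetilde{M}$ one has $\Omega(s_1 s_2 x) = \Omega(s_2 x s_1) = \Omega(x s_1 s_2)$, and linearity extends this to $\mathcal{A}$. Centrality on adjoints is forced by positivity of $\Omega$: for $s \in S$,
\[
\Omega(s^* x) = \overline{\Omega(x^* s)} = \overline{\Omega(s x^*)} = \Omega(x s^*).
\]
In particular, $f_y$ vanishes on $\mathcal{A}$.

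Finally, I would invoke Kaplansky's density theorem: the unit ball of $\mathcal{A}$ is SOT-dense, hence $\|\cdot\|_2$-dense, in the unit ball of $S''$. Combined with the $\|\cdot\|_2$-continuity of $f_y$ from the first step, this forces $f_y$ to vanish on the unit ball of $S''$, and therefore on all of $S''$ by linearity. Since $y \in \widetilde{M}$ was arbitrary, $\Omega$ is $S''$-central. The only genuinely nontrivial ingredient is the Cauchy--Schwarz estimate of the first step; once it is in place, the remaining arguments are routine.
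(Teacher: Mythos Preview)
Your proposal is correct and follows essentially the same approach as the paper's proof: both use the Cauchy--Schwarz inequality together with the bound $\Omega|_M \le c\tau$ to obtain $\|\cdot\|_2$-continuity of $x \mapsto \Omega(xy)-\Omega(yx)$, and then pass from $S$ to $S''$ by an $\rl^2$-density argument. The paper phrases the last step by showing that $M_0 := \{x \in M \mid \Omega(xy)=\Omega(yx)\text{ for all }y\in\Mtil\}$ is an $\rl^2$-closed $*$-subalgebra containing $S$, while you fix $y$ and invoke Kaplansky density explicitly; these are two packagings of the same idea, and your version has the virtue of spelling out the verification that centrality passes to products and adjoints.
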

\begin{proof}
 For all elements $y\in\Mtil $ and $x\in M$, by the Cauchy-Schwarz inequality, we have that $\abs{\Omega(yx)}^2\le\Omega(y^*y)\Omega(x^*x)\le c\Omega(y^*y)\tau(x^*x) \le c\norm{y}^2\cdot\norm{x}^2_2$ and similarly $\abs{\Omega(xy)}^2\le c\norm{y}^2\cdot\norm{x}^2_2$.

 Thus, the set $M_0:=\{x\in M\mid \Omega(xy)=\Omega(yx)\rm{ for all }y\in\Mtil\}$ is an $\rl^2$-closed $\ast$-subalgebra of $M$. Since $S$ is contained in $M_0$ and $M_0$ is $\rl^2$-closed, it follows that $S^{\bi}$ is also contained in $M_0$, and this exactly means that $\Omega$ is $S^{\bi}$-central.
\end{proof}

\begin{lemma}\label{lem.rel-ameab}
Let $\sigma:\Gamma\act (X,\mu)$ be a free p.m.p. action of a countable group $\Gamma$ on a standard probability space $(X,\mu)$. Denote $A:=\rl^\infty(X,\mu)$ and let $p\in A$ be a non-zero projection. Let $\Sigma<\Gamma$ be a subgroup, $n \ge 1$ be an integer and denote $M:=\M_n(\cz)\otimes p(A\rtimes \Gamma)p$ and $Q:=\M_n(\cz)\otimes p(A\rtimes \Sigma)p$. Assume that $\g<\uu(M)$ is a subgroup and $q\in\g^\pr\cap M$ is a non-zero projection such that
\begin{itemize}
\item $\g$ normalizes $\M_n(\cz)\otimes Ap$,
\item $(\g q)^{\bi}$ is amenable relative to $Q$.
\end{itemize}
Denote by $M_0$ the von Neumann algebra generated by $\g$ and $1\tens Ap$. Then there exists a non-zero projection $q_0\in M_0 ^ \pr \cap M$ such that $M_0 q_0$ is amenable relative to $Q$.
\end{lemma}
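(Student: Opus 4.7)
The strategy is to produce an $M_0$-central positive functional on $\langle M, e_Q\rangle$ by combining the $(\mathcal{G}q)''$-central functional from the hypothesis with averaging over the abelian algebra $B_0 := 1\otimes Ap$, and then to extract the desired projection from its restriction to $M$. The key structural observation is that $\mathcal{G}$ normalizes $B_0$: indeed, $B_0$ is the center of $\M_n(\cz)\otimes Ap$, which $\mathcal{G}$ normalizes by hypothesis. Let $\alpha:\mathcal{G}\to\aut(B_0)$ denote the induced action.

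First, the hypothesis gives a $(\mathcal{G}q)''$-central state $\Omega$ on $q\langle M,e_Q\rangle q$ with $\Omega|_{qMq}=\tau$. Extend $\Omega$ to a positive functional $\widetilde\Omega$ on $\langle M,e_Q\rangle$ by $\widetilde\Omega(x):=\Omega(qxq)$. Since every $v\in\mathcal{G}$ commutes with $q$, the identity
\[
\widetilde\Omega(vxv^*)=\Omega\bigl((vq)(qxq)(vq)^*\bigr)=\Omega(qxq)=\widetilde\Omega(x),
\]
obtained by applying $\mathcal{G}q$-centrality of $\Omega$ to the unitary $vq\in\mathcal{G}q$, shows that $\widetilde\Omega$ is $\mathcal{G}$-central (not merely $\mathcal{G}q$-central); moreover $\widetilde\Omega|_M(y)=\tau(qy)$.

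Writing $\sigma_w(\Psi)(x):=\Psi(w^{-1}xw)$ for conjugation and using the relation $vu=\alpha_v(u)\,v$ for $u\in\mathcal{U}(B_0)$, the $\mathcal{G}$-centrality of $\widetilde\Omega$ yields
\[
\sigma_v\sigma_u(\widetilde\Omega)=\sigma_{\alpha_v(u)}(\widetilde\Omega),\qquad v\in\mathcal{G},\ u\in\mathcal{U}(B_0).
\]
Hence the $\mathcal{U}(B_0)$-orbit of $\widetilde\Omega$ is $\mathcal{G}$-stable (permuted via $\alpha$), and its weak-$*$ closed convex hull $K$ inside the positive cone of $\langle M,e_Q\rangle^*$ is invariant under the group $G:=\mathcal{G}\cdot\mathcal{U}(B_0)\subset\mathcal{U}(M)$. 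Since $G$ acts on $K$ by weak-$*$ continuous affine isometries (conjugation by unitaries being isometric on $\langle M,e_Q\rangle^*$), a fixed-point argument -- Markov--Kakutani applied to the abelian $\mathcal{U}(B_0)$-action to produce the weak-$*$ compact convex $\mathcal{G}$-invariant subset $K^{\mathcal{U}(B_0)}$ of $B_0$-central functionals in $K$, followed by Ryll--Nardzewski applied to the isometric $\mathcal{G}$-action on $K^{\mathcal{U}(B_0)}$ -- provides a common fixed point $\Omega^*\in K$. By Lemma \ref{lem.centrality-of-states} applied to $S=\mathcal{G}\cup\mathcal{U}(B_0)$, the functional $\Omega^*$ is $M_0$-central.

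Finally, $\Omega^*|_M$ takes the form $y\mapsto\tau(Q^*y)$, where $Q^*$ lies in the weak-$*$ closure of the convex hull of $\{uqu^*:u\in\mathcal{U}(B_0)\}$. Thus $Q^*\in M_0'\cap M$ is positive with $0\le Q^*\le 1$ and $\tau(Q^*)=\tau(q)>0$. I choose a nonzero spectral projection $q_0$ of $Q^*$ in $M_0'\cap M$, namely $q_0:=1_{\{1\}}(Q^*)$ if this is nonzero, and otherwise a spectral projection in an interval $[1-\varepsilon,1]$, invoking the standard equivalence between the ``$\Omega^*|_{pMp}=\tau$'' and ``$\Omega^*|_{pMp}\le c\tau$'' formulations of relative amenability to reduce to an exact trace restriction. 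Then $\Omega^*|_{q_0\langle M,e_Q\rangle q_0}$ is an $M_0q_0$-central positive functional that restricts (after rescaling if needed) to $\tau$ on $q_0Mq_0$, showing that $M_0q_0$ is amenable relative to $Q$. The main obstacle is obtaining simultaneous $\mathcal{U}(B_0)$- and $\mathcal{G}$-centrality of the averaged functional without assuming $\mathcal{G}$ amenable; this is resolved by exploiting that the $\mathcal{G}$-action permutes (rather than destroys) the $\mathcal{U}(B_0)$-orbit, so the relevant convex sets are already $\mathcal{G}$-invariant, and a Ryll--Nardzewski-type fixed-point theorem applies because $\mathcal{G}$ acts by isometries.
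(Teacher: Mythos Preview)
Your argument has a genuine gap at the Ryll--Nardzewski step. The set $K^{\mathcal{U}(B_0)}$ is only weak-$*$ compact in $\langle M,e_Q\rangle^*$, not weakly compact, and Ryll--Nardzewski requires weak compactness. That hypothesis cannot be dropped: take any non-amenable group $G$ acting by left translation on the weak-$*$ compact convex set of states on $\ell^\infty(G)$; the action is by isometries, yet there is no fixed point. Since $\mathcal{G}$ is not assumed amenable, there is no abstract fixed-point theorem that produces a $\mathcal{G}$-fixed element of $K^{\mathcal{U}(B_0)}$. Equivalently, averaging with an invariant mean $m$ on the abelian group $\mathcal{U}(B_0)$ produces a $\mathcal{U}(B_0)$-central functional, but $\mathcal{G}$-invariance would require $m$ to be invariant under the automorphisms $\alpha_v$ of $\mathcal{U}(B_0)$, which an invariant mean on an abelian group has no reason to be. A symptom that something is missing: your argument never uses the freeness of $\Gamma\curvearrowright X$, which is part of the hypotheses.

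The paper avoids this obstacle by using the concrete realization of the basic construction (Lemma~\ref{lem.basic-cons-gms}): $\langle M,e_Q\rangle\cong N=\M_n(\cz)\otimes(p\otimes 1)\bigl((A\tens\ell^\infty(\Gamma/\Sigma))\rtimes\Gamma\bigr)(p\otimes 1)$. Inside $N$ one has the abelian-over-$\M_n$ subalgebra $D:=\M_n(\cz)\otimes Ap\otimes\ell^\infty(\Gamma/\Sigma)$, and freeness of the action allows one to show that every unitary in $M$ normalizing $\M_n(\cz)\otimes Ap$ also normalizes $D$ inside $N$; hence the trace-preserving conditional expectation $E_D$ is $\mathcal{G}$-equivariant. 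The functional $T\mapsto\Omega(E_D(T))$ is therefore simultaneously $\mathcal{G}$-central (by equivariance of $E_D$) and $(1\otimes Ap)$-central (since $1\otimes Ap$ commutes with $D$), with restriction to $M$ dominated by a multiple of the trace. Lemma~\ref{lem.centrality-of-states} then gives $M_0$-centrality, and \cite[Lemma~2.9]{BV12} extracts the projection $q_0$. The point is that $E_D$ is an explicit, $\mathcal{G}$-equivariant averaging over $B_0$, so no fixed-point theorem is needed.
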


\begin{proof}
Since $(\g q)^{\bi}$ is amenable relative to $Q$, there exists a state $\Omega_1$ on $q\<M,e_Q\>q$ such that $\Omega_1$ is $\g q$-central and it restricts to the trace on $qMq$.

Denote $N:=\M_n(\cz)\otimes (p\otimes 1)((A\tens\ell^\infty(\Gamma/\Sigma))\rtimes\Gamma)(p\otimes 1)$, where $\sigma:\Gamma \act A\tens\ell^\infty(\Gamma/\Sigma)$ is the diagonal action. By Lemma \ref{lem.basic-cons-gms} it follows that $N$ is isomorphic with the basic construction $\<M,e_Q\>$, thus $\Omega_1$ is a $\g q$-central state on $qNq$ whose restriction to $qMq$ equals the trace.

Define a state $\Omega$ on $N$ by the formula $\Omega(T)=\Omega_1(qTq)$, for all $T\in N$. Since $q$ commutes with $\g$, it follows immediately that $\Omega$ is $\g$-central. Since $\Omega_1$ restricts to the trace on $qMq$, we get that $\Omega\mid_M$ is bounded by a multiple of the trace.

Denote $D:=\M_n(\cz)\tens Ap\tens \ell^\infty(\Gamma/\Sigma)$ and let $E_D:N\to D$ be the unique trace-preserving conditional expectation.

 We claim that every unitary $v\in \uu(M)$ that normalizes $\M_n(\cz)\otimes Ap$ also normalizes $D$ inside $N$. Indeed, take $v\in\nn_M(\M_n(\cz)\otimes Ap)$. For every $g\in \Gamma$, there exist a projection $p_g\in Ap$ and a unitary $v_g\in \uu(\M_n(\cz)\otimes A\si_g(p_g))$ such that $\sum_{g\in\Gamma}{p_g}=1$, $\sum_{g\in\Gamma}{\si_g(p_g)}=1$ and $v(1\otimes p_g)=v_g(1\otimes u_g)$. If $x\in\M_n(\cz)\otimes Ap_g$, then it follows immediately that $vxv^*= v_g(\id\otimes\sigma_g)(x)v_g^*$. Moreover, for every $x\in \M_n(\cz)\tens Ap_g\tens \ell^\infty(\Gamma/\Sigma)$, we get that $vxv^*= (v_g\otimes 1)(\id\otimes\sigma_g)(x)(v_g^*\otimes 1)$, and since the right hand side belongs to $D$, our claim is proven.

If $v\in\nn_M(\M_n(\cz)\otimes Ap)$, then $v\in \nn_N(D)$ and since $E_D$ is the unique trace-preserving conditional expectation form $N$ onto $D$, it follows that $E_D(vTv^*)=vE_D(T)v^*$, for all $T\in N$. In particular, $E_D(vTv^*)=vE_D(T)v^*$, for all $v\in \g$ and $T\in N$.

Define a state $\widetilde{\Omega}$ on $N$ by $\widetilde{\Omega}(T)=\Omega(E_D(T))$, for all $T\in N$. Since $\Omega$ is $\g$-central, the previous remark implies that $\widetilde{\Omega}$ is also $\g$-central. Since $E_D(M)\sset \M_n(\cz)\otimes Ap$, we have that $\widetilde{\Omega}\mid_M$ is bounded by a multiple of the trace. Notice that $\widetilde{\Omega}$ is automatically $(1\tens Ap)$-central, since $1\tens Ap$ commutes with $D$, and hence, by Lemma \ref{lem.centrality-of-states}, it follows that $\widetilde{\Omega}$ is an $M_0$-central state on $N$ whose restriction to $M$ is bounded by a multiple of the trace. In particular, $\widetilde{\Omega}$ is normal on $M$, and then, by \cite[Lemma 2.9]{BV12}, there exists a non-zero projection $q_0\in M_0 ^ \pr \cap M$ such that $M_0 q_0$ is amenable relative to $Q$.
\end{proof}

The next lemma is entirely contained in the proof of \cite[Theorem 7.1]{Io12a} and \cite[Lemma 8.2]{DI12}.
\begin{lemma}\label{lem.coamenable AFP HNN}
Let $\Gamma$ be a non-degenerate amalgamated free product $\Gamma=\Gamma_1*_\Sigma\Gamma_2$ or a non-degenerate HNN extension $\Gamma=\rm{HNN}(\Gamma_0,\Sigma,\theta)$. Then $\Sigma$ is not co-amenable in $\Gamma$.
\end{lemma}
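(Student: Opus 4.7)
I would argue by contradiction. Suppose that $\Sigma$ is co-amenable in $\Gamma$. By the classical characterization due to Eymard, this is equivalent to the existence of a sequence of unit vectors $(\xi_n)_n$ in $\ell^2(\Gamma/\Sigma)$ with $\|g\cdot\xi_n - \xi_n\|_2 \to 0$ for every $g\in\Gamma$; equivalently, the $\ell^1$-probability densities $|\xi_n|^2$ on $\Gamma/\Sigma$ are almost $\Gamma$-invariant, since $\||g\xi_n|^2-|\xi_n|^2\|_1 \leq 2\,\|g\xi_n-\xi_n\|_2 \to 0$.

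The main tool would be the Bass-Serre tree $T$ of the splitting. In the AFP case, $V(T)=\Gamma/\Gamma_1\sqcup\Gamma/\Gamma_2$ and the edge set $E(T)$ is identified with $\Gamma/\Sigma$ via $g\Sigma\mapsto$ the edge joining $g\Gamma_1$ and $g\Gamma_2$. In the HNN case, $V(T)=\Gamma/\Gamma_0$ and one orientation of $E(T)$ is identified with $\Gamma/\Sigma$, the reversed orientation being $\Gamma/\theta(\Sigma)$. The $\Gamma$-equivariant endpoint maps $\pi_\pm\colon E(T)\to V(T)$ push the almost $\Gamma$-invariant $\ell^1$-measures $|\xi_n|^2$ forward to almost $\Gamma$-invariant probability measures on each $\Gamma$-orbit of vertices. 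This first reduction shows that every vertex group ($\Gamma_1$ and $\Gamma_2$ in the AFP case; $\Gamma_0$ in the HNN case) would itself have to be co-amenable in $\Gamma$.

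The remaining, and hard, step will be to derive a contradiction from co-amenability of a vertex group combined with the non-elementarity of the action $\Gamma\curvearrowright T$. The non-degeneracy assumptions $[\Gamma_1:\Sigma]\geq 2$, $[\Gamma_2:\Sigma]\geq 3$ (respectively $\Sigma\neq\Gamma_0\neq\theta(\Sigma)$) together with the malnormality of $\{\Sigma,\theta(\Sigma)\}$ inside the vertex groups ensure that $\Gamma$ contains hyperbolic isometries of $T$ with disjoint axes. I would close with a Powers-type estimate in the spirit of \cite[Theorem 7.1]{Io12a} and \cite[Lemma 8.2]{DI12}: combining such a hyperbolic element with suitably chosen coset representatives in a vertex group, one produces arbitrarily many pairwise almost-orthogonal $\Gamma$-translates of a mollified version of $\xi_n$ in $\ell^2(\Gamma/\Sigma)$; averaging then forces $\|\xi_n\|_2\to 0$, contradicting $\|\xi_n\|_2=1$. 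The main obstacle is the geometric/combinatorial bookkeeping required to ensure genuine disjointness of the relevant tree-translates; this is precisely where the non-degeneracy and (rel)malnormality hypotheses enter essentially, and it is exactly what is carried out in the two cited references.
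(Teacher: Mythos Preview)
The paper itself gives no proof of this lemma; it simply records that the statement is contained in \cite[Theorem 7.1]{Io12a} and \cite[Lemma 8.2]{DI12}. Your proposal is an elaboration of the same route---almost invariant vectors on $\ell^2(\Gamma/\Sigma)$, Bass--Serre tree, Powers/ping-pong contradiction---and in the end you also defer the decisive estimate to exactly those two references. In that sense your plan is aligned with the paper.

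There is, however, one genuine slip. The lemma as stated assumes \emph{only} non-degeneracy: $[\Gamma_1:\Sigma]\ge 2$, $[\Gamma_2:\Sigma]\ge 3$ in the AFP case, and $\Sigma\neq\Gamma_0\neq\theta(\Sigma)$ in the HNN case. Malnormality is \emph{not} part of the hypotheses here (it appears only in the main theorem), yet you invoke ``the malnormality of $\{\Sigma,\theta(\Sigma)\}$ inside the vertex groups'' to produce hyperbolic isometries with disjoint axes. This is unnecessary and, as written, would only prove a weaker statement than the lemma claims. Non-degeneracy alone already guarantees that the $\Gamma$-action on the Bass--Serre tree is of general type (no global fixed point, no invariant end or axis), and the standard ping-pong in the tree then yields two hyperbolic elements generating a free subgroup; this is all that the arguments in \cite{Io12a} and \cite{DI12} use. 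So you should delete the malnormality clause.

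A smaller point: the ``first reduction'' to co-amenability of the vertex groups is a detour. The Powers-type estimate is carried out directly on $\ell^2(\Gamma/\Sigma)$ (the edge set of the tree), not on $\ell^2(\Gamma/\Gamma_i)$. Your pushforward step is correct but not used in the conclusion you describe, so it can be dropped.
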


\subsection{Properties of the comultiplication}

We recall now a few useful properties of the comultiplication that
we shall use throughout the paper. Let $M$ be a II$_1$ factor and
assume that $M\cong\rl\Lambda$, for some countable group $\Lambda$.
Define the comultiplication $\Delta:\rl\Lambda \to\rl\Lambda\tens
\rl\Lambda $, associated to $\Lambda$, by $$\Delta(v_s)=v_s\otimes
v_s, \rm{ for all }s\in\Lambda.$$

The next proposition is contained in \cite[proposition 7.2]{IPV10}
and \cite[Proposition 4.1]{BV12}.
\begin{proposition}\label{prop:comultiplication}

Assume that $\mtil$ is a tracial von Neumann algebra such that
$M\sset\mtil$.
\begin{enumerate}
\item If $Q\sset M$ is a von Neumann subalgebra such that $\Delta(M)\prec  M \tens $Q, then there exists a non-zero projection $q\in Q^\pr \cap M$ such that $Qq\sset qMq$
has finite index.
\item If $Q\sset\mtil$ is a von Neumann subalgebra such that $\Delta(M)$ is amenable relative to $M\tens Q$ inside $M\tens\mtil$, then $M$ is amenable relative to
$Q$ inside $\mtil$.
\item If $Q\sset M$ is a von Neumann subalgebra that has no amenable direct summand, then $\Delta(Q)$ is strongly non-amenable relative to $M\tens 1$.
\item If $Q\sset M$ is diffuse, then $\De(Q)\nprec M\tens 1$ and $\De(Q)\nprec 1\tens M$.

\item $\De(M)^\pr\cap M\tens M=\cz 1$.
\end{enumerate}
\end{proposition}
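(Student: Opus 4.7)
The proof of all five statements is driven by a single key identity,
$$\De(v_s)(1 \otimes v_s^*) = v_s \otimes 1 \qquad \text{for all } s \in \La,$$
together with its tensor-flip analogue. Equivalently, the unitary $W \in \uu(\ell^2(\La) \otimes \ell^2(\La))$ defined by $W(\de_s \otimes \de_t) = \de_s \otimes \de_{st}$ satisfies $W^*(v_s \otimes 1)W = \De(v_s)$. Via this identity, every element $v_s \otimes 1$ lies in the $*$-algebra generated by $\De(M)$ and $1 \tens M$, so bimodule and relative-amenability properties of $\De(M)$ with respect to $M \tens Q$ can be converted into corresponding properties of $M \tens 1$ with respect to $M \tens Q$, where they become tractable.

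For (1), assuming $\De(M) \prec M \tens Q$, Theorem \ref{thm:intertwining} supplies a projection $q_0$, a partial isometry $v$ and a $*$-homomorphism $\te$ with $\De(a)v = v\te(a)$. Right-multiplying $v$ by the unitaries $1 \otimes v_s^*$ and invoking the key identity transforms this into an intertwiner of $M \tens 1$ into $M \tens Q$ inside $M \tens M$. Collapsing the shared first factor (via slice maps, together with the fact that $M$ is a factor) produces a non-zero projection $q\in Q^\pr\cap M$ for which $Qq\sset qMq$ has finite index. For (2), start from a $\De(M)$-central state $\Om$ on $\langle M \tens \Mtil, e_{M \tens Q}\rangle \simeq M \tens \langle \Mtil, e_Q\rangle$ whose restriction to $M \tens \Mtil$ is tracial. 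Using the key identity, averaging $\Om$ over $\{v_s \otimes 1\,:\,s\in\La\}$ upgrades its $\De(M)$-centrality into $(M\tens 1)$-centrality, after which slicing away the first tensor factor (by Lemma \ref{lem.centrality-of-states} applied to a slice of $\Om$) produces an $M$-central state on $\langle \Mtil, e_Q\rangle$ that restricts to the trace on $\Mtil$, i.e.\ $M$ is amenable relative to $Q$ inside $\Mtil$.

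Statement (3) is the contrapositive of a projection-refined version of (2): if $\De(Q)q$ were amenable relative to $M \tens 1$ for some non-zero $q \in \De(Q)^\pr \cap M \tens M$, the same averaging argument would yield a non-zero projection in $Q^\pr\cap M$ on which $Q$ is amenable relative to $\cz$, contradicting the absence of amenable direct summands. For (4), apply the third criterion of Theorem \ref{thm:intertwining}: because $Q$ is diffuse, there is a sequence $(u_n) \sset \uu(Q)$ with $u_n \to 0$ weakly; approximating $x, y \in M \tens M$ by finite linear combinations of $v_s \otimes v_t$ and using $\De(u_n) = u_n \otimes u_n$, one computes
$$\norm{E_{M \tens 1}\bigl(x^*(u_n \otimes u_n)y\bigr)}_2 \longrightarrow 0,$$
each elementary term collapsing to a product involving $\tau(v_{t'}v_t^* u_n) \to 0$; the symmetric argument gives $\De(Q) \nprec 1 \tens M$. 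Finally, for (5), any $x \in \De(M)^\pr \cap M \tens M$ expands as $x = \sum_{s, t} \al_{s,t}\, v_s \otimes v_t$ in $\ell^2$, and commutation with $\De(v_r) = v_r \otimes v_r$ forces $\al_{rsr^{-1}, rtr^{-1}} = \al_{s,t}$ for every $r \in \La$; the $\La$-orbits of diagonal conjugation on $\La \times \La$ are infinite except for $\{(e,e)\}$ (here $\La$ is i.c.c.\ because $M$ is a factor), so $\ell^2$-summability of $(\al_{s,t})$ leaves only $\al_{e,e}$ non-zero, giving $x \in \cz 1$.

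The real obstacle is (2), which is also the engine behind (1) and (3): passing from a $\De(M)$-central state on the ``large'' basic construction down to an $M$-central state on the ``small'' one requires the explicit averaging on the first tensor factor afforded by the key identity, and this is where the full rigidity of the comultiplication is used. Parts (4) and (5) are then straightforward calculations in the group-element basis.
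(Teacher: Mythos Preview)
The paper does not supply its own proof here; it cites \cite[Proposition~7.2]{IPV10} and \cite[Proposition~4.1]{BV12}. Your sketch follows those references in spirit, and parts (4) and (5) are argued correctly. Two points need repair. For (2), the averaging over $\{v_s\otimes 1:s\in\La\}$ is unnecessary: writing $1\otimes v_s=(v_s^*\otimes 1)\De(v_s)$ and using that $v_s^*\otimes 1$ commutes with $1\otimes\<\Mtil,e_Q\>$, one checks directly that the slice $\psi(T):=\Om(1\otimes T)$ already satisfies $\psi(v_sTv_s^*)=\psi(T)$ for all $s\in\La$, hence is $M$-central by Lemma~\ref{lem.centrality-of-states}. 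For (1), the manipulation you describe does not work as written: right-multiplying the intertwining partial isometry $v$ by $1\otimes v_s^*$ produces an $s$-dependent object, not a single intertwiner of $M\otimes 1$ into $M\tens Q$. The argument in \cite{IPV10} proceeds instead by contrapositive via the net criterion of Theorem~\ref{thm:intertwining}: if no non-zero $q\in Q'\cap M$ makes $Qq\subset qMq$ of finite index, then $M\nprec_M Q$, so there is a net $(u_i)\sset\uu(M)$ with $\|E_Q(x^*u_iy)\|_2\to 0$ for all $x,y\in M$; since
\[
E_{M\tens Q}\bigl((a\otimes b)^*\De(u_i)(c\otimes d)\bigr)=a^*u_ic\otimes E_Q(b^*u_id),
\]
this yields $\De(M)\nprec M\tens Q$. (A minor side remark: your parenthetical identity for $W$ has the conjugation reversed---with your $W$ one gets $W(v_s\otimes 1)W^*=\De(v_s)$---and in any case $W\notin M\tens M$, so only the algebraic identity $\De(v_s)(1\otimes v_s^*)=v_s\otimes 1$ is actually usable inside $M\tens M$.)
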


\section{Tensor length deformation}

Assume that $G$ is a countable discrete group acting on a countable
set $I$ and let $(A_0,\tau)$ be any tracial von Neumann algebra.
Consider the generalized Bernoulli action $G\act A_0^I$. Denote by
$M$ the corresponding Bernoulli crossed product $M=A_0^I\rtimes G$.

In \cite{Po03}, \cite{Po04}, Popa introduced his fundamental \emph{malleable
deformation} for Bernoulli crossed products and used it to prove the
first W$^\ast$-rigidity theorems for property (T) groups. In
\cite{Po06b}, Popa introduced spectral gap methods to prove
W$^\ast$-rigidity theorems for direct products of non-amenable groups.
These methods and results have been intensively generalized and used
in many subsequent works. For more details about Popa's
deformation/rigidity theory, we refer to the survey papers
\cite{Po06a}, \cite{Va10a}, \cite{Io12b}.

In this paper, we use the following variant of Popa's malleable
deformation for Bernoulli crossed products, due to Ioana
\cite{Io06}. Consider the free product $A_0 * \rl\zz$, with respect
to the natural traces. Denote $\mtil := (A_0\ast \rl\zz)^I \rtimes G$
the corresponding generalized Bernoulli crossed product.

Consider the self-adjoint element $h\in\rl\zz$, with spectrum
$[-\pi,\pi]$, such that $\exp(ih)$ equals the canonical generating
unitary of $\rl\zz$ and denote by $(u_t)_{t\in\rz}$ the
one-parameter group of unitary elements in $\rl\zz$ given by
$u_t:=\exp(ith)$, for all $t\in \rz$.

Define a one-parameter group of automorphisms $\al_t\in\aut(\mtil)$
by
$$\al_t(u_g)=u_g,\; \rm{ for all } g\in G,$$ and
$$\al_t(\pi_i(x))=\pi_i(u_txu_t^*),\rm{ for all } x\in A_0\ast\rl\zz,\; i\in I,$$

where $\pi_i:A_0\ast\rl\zz\to(A_0\ast\rl\zz)^I$ puts an element of
$A_0\ast\rl\zz$ in the $i$-th position in $(A_0\ast\rl\zz)^I$.

We call $(\al_t)_{t\in\rz}\in \aut(\mtil)$ the \emph{tensor length
deformation} of the generalized Bernoulli crossed product
$M=A_0^I\rtimes G$.
If $Q\sset \mtil$ is a von Neumann subalgebra, then we say that $\al_t\to \id$ uniformly on $Q$ if
$$\ds\sup_{b\in \uu(Q)}{\norm{\al_t(b)-b}_2}\to 0,\rm{ as }t\to 0.$$

%

We recall from \cite{BV12} the following \emph{spectral gap
rigidity} theorem.

\begin{theorem}[{\cite[Theorem 3.1]{BV12}}]\label{thm: spectral gap}

Let $G \curvearrowright I$ be an action of a countable group on a
countable set. Assume that $(A_0,\tau)$ and $(N,\tau)$ are arbitrary
tracial von Neumann algebras. Consider, as above, the generalized
Bernoulli crossed product $M = A_0^I \rtimes G$, with its tensor
length deformation $(\alpha_t)_{t\in\rz} \in \aut(\mtil)$. Let $p
\in N \tens M$ be a non-zero projection and  $Q \subset p (N \tens M)
p$ be a von Neumann subalgebra.

Assume there exists an integer $\kappa > 0$ such that for every
finite subset $\ff \subset I$, with $\abs{\ff} \geq \kappa$,
$$Q \rm{ is strongly non-amenable relative to } N \tens (A_0^I \rtimes \stab{\ff}).$$

Then: $$\id\otimes\alpha_t\to \id \rm{ uniformly on }\uu(Q^\pr\cap
p(N\tens M) p).$$
\end{theorem}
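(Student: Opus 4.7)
The plan is a spectral gap rigidity argument in the style of Popa and Ioana, proceeding by contradiction. Assume the conclusion fails, so that there exist $\eps>0$, $t_n\to 0$, and unitaries $v_n\in\uu(Q^\pr\cap p(N\tens M)p)$ with $\norm{(\id\ot\al_{t_n})(v_n)-v_n}_2\ge \eps$ for every $n$. Set $w_n:=(\id\ot\al_{t_n})(v_n)$ and consider the vectors $\xi_n:=w_n-E_{N\tens M}(w_n)$ in the $(N\tens M)$-bimodule $\hh:=\rl^2(N\tens\mtil)\ominus \rl^2(N\tens M)$. Three elementary facts hold: $\liminf_n\norm{\xi_n}_2\ge \eps/2$; after rescaling using the uniform bound $\norm{\xi_n}_\infty\le 2$, one has $\norm{x\xi_n}_2\le\norm{x}_2$ for every $x\in p(N\tens M)p$; and $\norm{a\xi_n-\xi_n a}_2\to 0$ for every $a\in Q$, since $v_n$ commutes with $a$ and $(\id\ot\al_{t_n})(a)\to a$ in $\norm{\cdot}_2$ as $t_n\to 0$. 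Applying Lemma \ref{lema:criterion for left am bimodule} produces a non-zero projection $q^\pr\in Q^\pr\cap p(N\tens M)p$ such that the bimodule $q^\pr\hh$ is left $Qq^\pr$-amenable.

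The decisive step is the bimodule analysis of $\hh$. Writing $K:=(A_0\ast\rl\zz)\ominus A_0$ and performing a Fourier expansion over $G$ and over reduced words in the $\pi_i(K)$ ($i\in I$), the space $\rl^2(\mtil)\ominus\rl^2(M)$ splits orthogonally into $M$-$M$-sub-bimodules $\hh_\ff$ indexed by the non-empty finite $I$-supports $\ff$ of the words. For each such $\ff$, the conditional expectation $E_{A_0^I\rtimes\stab\ff}$ annihilates every vector in $\hh_\ff$, so by Lemma \ref{lem.basic-cons-gms} the subbimodule $\hh_\ff$ realizes a subrepresentation of the basic construction $\<M,e_{A_0^I\rtimes\stab\ff}\>$ and is therefore weakly contained, as an $M$-$M$-bimodule, in the coarse bimodule $\rl^2(M)\ot_{A_0^I\rtimes\stab\ff}\rl^2(M)$. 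Tensoring on the left by $N$, the full bimodule $\hh$ is weakly contained, as an $(N\tens M)$-bimodule, in the direct sum over non-empty finite $\ff\sset I$ of $\rl^2(N\tens M)\ot_{N\tens(A_0^I\rtimes\stab\ff)}\rl^2(N\tens M)$.

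Transporting the left $Qq^\pr$-amenability of $q^\pr\hh$ across this weak containment supplies a finite non-empty $\ff\sset I$ for which $Qq^\pr$ is amenable relative to $N\tens(A_0^I\rtimes\stab\ff)$. The remaining point is to upgrade to $\abs{\ff}\ge\kappa$. This is arranged by amplifying the tensor length deformation: replacing the single free factor $\rl\zz$ by a $\kappa$-fold free product $\rl\zz\ast\cdots\ast\rl\zz$, one obtains an enlarged tracial extension $\mtil_\kappa\supset M$ carrying a joint deformation whose orthogonal complement is controlled by words of $I$-support of size at least $\kappa$. The same argument applied inside $\mtil_\kappa$ then yields a projection $q^\pr$ and a finite $\ff\sset I$ with $\abs{\ff}\ge\kappa$ for which $Qq^\pr$ is amenable relative to $N\tens(A_0^I\rtimes\stab\ff)$, contradicting the strong non-amenability hypothesis. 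The main obstacle is precisely this coupling of the bimodule weak containment with the $\kappa$-fold amplification; once the support-size analysis is in place, the relative amenability lands in the correct summand and the rest of the proof is a routine transfer of left amenability through a weak containment of bimodules.
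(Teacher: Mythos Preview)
This theorem is not proved in the present paper; it is quoted from \cite[Theorem 3.1]{BV12} and used as a black box, so there is no proof here to compare against. Your overall architecture is nonetheless the correct one and is the strategy of \cite{IPV10,BV12}: contradiction, transversality to produce asymptotically $Q$-central vectors $\xi_n\in\rl^2(N\tens\Mtil)\ominus\rl^2(N\tens M)$, Lemma~\ref{lema:criterion for left am bimodule}, and then a bimodule weak-containment analysis to convert left $Qq'$-amenability into relative amenability with respect to some $N\tens(A_0^I\rtimes\stab\ff)$. Two small corrections: the bound $\liminf_n\norm{\xi_n}_2\ge\eps/2$ is not literally elementary --- it needs the period-two symmetry of Ioana's deformation (the automorphism of $A_0\ast\rl\zz$ inverting the canonical unitary) to get Popa's transversality inequality, together with replacing $t_n$ by $t_n/2$; and your spaces $\hh_\ff$ are not $M$-$M$-sub-bimodules, because left multiplication by $u_g$ sends $\ff$ to $g\cdot\ff$, so one must group them into $G$-orbits of finite subsets.

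The genuine gap is your handling of $\kappa$. Replacing $\rl\zz$ by a $\kappa$-fold free product $\rl\zz\ast\cdots\ast\rl\zz$ at each site $i\in I$ enlarges the alphabet available at each site but does nothing to force the $I$-support of a word in the orthogonal complement to have cardinality at least $\kappa$: a word may perfectly well carry all of its free letters at a single position $i$, and then its $I$-support is $\{i\}$. So your ``joint deformation'' on $\Mtil_\kappa$ still has an orthogonal complement containing pieces with $I$-support of size one, and the argument lands you back at relative amenability with respect to $N\tens(A_0^I\rtimes\stab\{i\})$, which does not contradict the hypothesis when $\kappa\ge 2$. The mechanism actually used in \cite{BV12} is different and hinges on the fact that $K=(A_0\ast\rl\zz)\ominus A_0$ is, as an $A_0$-$A_0$-bimodule, a multiple of the coarse bimodule; this makes the $M$-$M$-bimodule $\rl^2(\Mtil)\ominus\rl^2(M)$ weakly equivalent to each of its Connes tensor powers over $M$, and iterating the tensor product $\kappa$ times is what forces the effective stabilizer down to $\stab\ff$ for some $\ff$ with $\abs{\ff}\ge\kappa$. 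You should replace the free-product amplification by this tensor-power/iteration argument.
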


\section{Cocycles and Gaussian deformation}

Let $\Gamma$ be a countable group and let $c:\Gamma\to\kk_\rz$ be a
1-cocycle into the orthogonal representation $\pi:\Gamma\to
\mathcal{O}(\kk_\rz)$. The 1-cocycle $c$ defines a one-parameter
family $(\psi_t)_{t>0}$ of positive definite functions on $\Gamma$
by
$$\psi_t:\Gamma\ni g\map \psi_t(g):=\exp(-t\norm{c(g)}^2)\in \rz.$$

If $\Gamma\act (A, \tau)$ is a trace-preserving action of $\Gamma$
on the tracial von Neumann algebra $(A,\tau)$ and $M:=A\rtimes
\Gamma$ is the corresponding crossed product, then to the family
$(\psi_t)_{t>0}$ corresponds a one-parameter family $(\fy_t)_{t>0}$
of unital completely positive normal trace-preserving maps on $M$,
defined by
$$\fy_t:M\ni bu_g\map \fy_t(bu_g):=\psi(g)bu_g=\exp(-t\norm{c(g)}^2)bu_g\in M .$$

If $\kk_\rz$ is a real Hilbert space and $\pi:\Gamma\to
\mathcal{O}(\kk_\rz)$ is an orthogonal representation, then we
denote by $\kk$ the complexification of $\kk_\rz$ and by $\pi$ the
corresponding unitary representation on $\kk$. To any unitary
representation $\pi:\Gamma\to \mathcal{U}(\kk)$ we associate the
$M$-$M$-bimodule $K^\pi:=\kk\otimes \rl^2(M)$, where the left-right
$M$-module action on $\kk^\pi$ is given by
$$(bu_g)\cdot (\xi\otimes x)\cdot y=\pi(g)\xi\otimes (bu_g)xy,$$ for all $b\in A, g\in\Gamma, \xi\in \kk$ and $x,y\in M.$

The unitary representation $\pi:\Gamma\to \mathcal{U}(\kk)$ is said
to be \emph{mixing} if, for all $\xi,\eta\in\kk$, we have that
$$\langle \pi(g)\xi,\eta \rangle\to 0, \rm{ as }g\to\infty.$$

We define now the \emph{malleable Gaussian deformation}
(\cite[Section 3]{Si10}) on $M=A\rtimes \Gamma$, associated to the
1-cocycle $c:\Gamma\to\kk_\rz$ into the orthogonal representation
$\pi:\Gamma\to \mathcal{O}(\kk_\rz)$.

Denote by $\si:\Gamma\act (Y,\nu)$ the Gaussian action associated to the orthogonal representation $\pi$. Let $D:=\rl^\infty(Y,\nu)$ and $\tau$ be the
trace on $D$ given by integration with respect to $\nu$. Then $\si$ yields a trace-preserving action $(\si_g)_{g\in \Gamma}$ of $\Gamma$ on $(D,\tau)$. For the purpose
of our paper it is more convenient to see $(D,\tau)$ as the unique abelian tracial von Neumann algebra generated by unitaries $\om(\xi)$, with $\xi\in \kk_\rz$, subject
to the following relations:
$$\lf.\barr{l} \om(\xi+\eta)=\om(\xi)\om(\eta), \rm{ for all }\xi,\eta\in\kk_\rz;\\\\
               \om(0)=1,\; \om(\xi)^*=\om(-\xi), \rm{ for all }\xi\in\kk_\rz; \\\\
               \tau(\om(\xi))=\exp(-\norm{\xi}^2), \rm{ for all }\xi\in\kk_\rz.
\earr\rg.$$

By construction, the Gaussian action of $\Gamma$ on $(D,\tau)$ is given by
$$\si_g(\omega(\xi))=\omega(\pi(g)\xi),\; \rm{ for all } g\in \Gamma,
\xi\in \kk_\rz.$$

We denote $\widetilde{M}:=(D\tens A)\rtimes \Gamma$, where $\Gamma$
acts diagonally on $D\tens A$, and we define a one-parameter group
of automorphisms $(\beta_t)_{t\in\rz}\in\aut(\widetilde{M})$ by
$$\beta_t(x)=x, \rm{ for all }x\in D\tens A,$$ and
$$\beta_t(u_g)=(\omega(tc(g))\otimes 1)u_g,\;\rm{ for all } g\in \Gamma,t\in\rz.$$

The automorphisms $(\beta_t)_{t\in\rz}\in\aut(\widetilde{M})$ give a
malleable deformation in the sense of Popa, i.e. $\beta_t\to\id$
pointwise, as $t\to 0$, in the $\rl^2$-norm on $\mtil$.

We record for later use the following two easy lemmas.

\begin{lemma}[{\cite[Lemma 2.1]{Io11}}]\label{lema:cocycle must be bounded}
If $\beta_t\to \id$ uniformly on the unit ball of $pMp$, for some
non-zero projection $p\in M$, then the cocycle $c$ must be bounded.
\end{lemma}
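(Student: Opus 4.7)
My plan is to argue by contraposition: assume the cocycle $c$ is unbounded, and exhibit a sequence of test elements in $(pMp)_1$ on which $\beta_t$ fails to converge to $\id$ uniformly.

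The quantitative engine of the proof is a direct Fourier calculation in $\Mtil=(D\tens A)\rtimes\Gamma$. For any $x=\sum_{g\in\Gamma}x_g u_g\in M$ with $x_g\in A$, the identities $\beta_t|_A=\id$ and $\beta_t(u_g)=(\omega(tc(g))\otimes 1)u_g$ give
\[
\beta_t(x)-x=\sum_{g\in\Gamma}\bigl((\omega(tc(g))-1)\otimes x_g\bigr)u_g.
\]
The cross terms vanish by orthogonality of the Fourier modes $\{(\cdot)u_g\}$ in $L^2(\Mtil)$, and the diagonal terms factor because $D\tens 1$ and $1\tens A$ are trace-independent inside $D\tens A$. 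Combined with $\|\omega(\xi)-1\|_2^{\,2}=2(1-e^{-\|\xi\|^2})$, which follows directly from the defining relations of $(D,\tau)$ (using $\omega(\xi)^*=\omega(-\xi)$ and $\omega(\xi)\omega(-\xi)=1$), this produces the master estimate
\[
\|\beta_t(x)-x\|_2^{\,2}=2\sum_{g\in\Gamma}\|x_g\|_2^{\,2}\bigl(1-e^{-t^2\|c(g)\|^2}\bigr).
\]

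Now assume $c$ is unbounded; choose $g_n\in\Gamma$ with $\|c(g_n)\|\to\infty$ and set $t_n:=\|c(g_n)\|^{-1}$, so $t_n\to 0$. By the master estimate, any $v\in (pMp)_1$ satisfies $\|\beta_{t_n}(v)-v\|_2^{\,2}\ge 2\|v_{g_n}\|_2^{\,2}(1-e^{-1})$, so the task reduces to producing $v_n\in(pMp)_1$ whose $g_n$-th Fourier coefficient has $\|\cdot\|_2$-norm bounded below independently of $n$. In the factor/Cartan setting relevant to the paper I would first pass to a non-zero sub-projection of $p$ lying in $A$: since $A\sset M$ is Cartan, a standard Murray--von Neumann equivalence argument provides a non-zero sub-projection of $p$ equivalent in $M$ to some $p_0\in A$, and conjugating by the corresponding partial isometry transports the uniform-convergence hypothesis to the corner $(p_0 Mp_0)_1$. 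With $p_0\in A$, the natural test element
\[
v_n:=p_0 u_{g_n}p_0=p_0\,(u_{g_n}p_0 u_{g_n}^{-1})\,u_{g_n}\in(p_0 Mp_0)_1
\]
has Fourier support exactly $\{g_n\}$ with coefficient $p_0(u_{g_n}p_0 u_{g_n}^{-1})\in A$, and the master estimate specializes to $\|\beta_{t_n}(v_n)-v_n\|_2^{\,2}=2\,\tau(p_0\,u_{g_n}p_0 u_{g_n}^{-1})(1-e^{-1})$.

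The argument closes as soon as $\tau(p_0\,u_{g_n}p_0 u_{g_n}^{-1})$ is bounded below in $n$; this is where the ambient setup of the paper intervenes, since the action on $A=\rl H^{(\Gamma)}$ is a Bernoulli shift and hence mixing, giving $\tau(p_0\,u_{g_n}p_0 u_{g_n}^{-1})\to\tau(p_0)^2>0$ as $g_n\to\infty$. This delivers $\|\beta_{t_n}(v_n)-v_n\|_2\ge\tau(p_0)\sqrt{1-e^{-1}}$ for all large $n$, contradicting the assumed uniform convergence. I expect the main obstacle, in attempting to free the proof from this specific Bernoulli/Cartan context, to be producing test elements with controlled $g_n$-Fourier mass when neither a sub-projection of $p$ in $A$ nor mixing of the action on $A$ is available; in that generality one would instead Fourier-expand $p$ itself and build $v_n$ as, for instance, the partial isometry in the polar decomposition of $pu_{g_n}p$, showing that its $g_n$-component does not degenerate.
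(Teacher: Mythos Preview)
The paper does not supply its own proof of this lemma: it is stated with a bare citation to \cite[Lemma~2.1]{Io11} and is used immediately afterwards (in Lemma~\ref{lem.cocycle-unif-on-delta-M}) only in situations where $M$ is already assumed to be a II$_1$ factor. So there is nothing in the paper to compare against directly.

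Your master estimate
\[
\|\beta_t(x)-x\|_2^{\,2}=2\sum_{g\in\Gamma}\bigl(1-e^{-t^2\|c(g)\|^2}\bigr)\|x_g\|_2^{\,2}
\]
is correct and is exactly the right starting point. Your argument from there --- passing via Murray--von Neumann equivalence to a projection $p_0\in A$, taking $v_n=p_0u_{g_n}p_0$, and invoking mixing of the Bernoulli action --- is valid \emph{in the concrete setting of Section~6}, and you are commendably explicit about which extra structure you are using. So for the paper's purposes your proof is adequate.

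That said, relative to the lemma \emph{as stated} (in the generality of Section~4, where $A$ is an arbitrary tracial von Neumann algebra with an arbitrary trace-preserving $\Gamma$-action), you are importing hypotheses not present in the statement: that $A$ is diffuse abelian and Cartan, and that the action is mixing. There is a cleaner route that needs only that $M$ is a II$_1$ factor (which is the standing hypothesis whenever the lemma is invoked). Namely: choose finitely many partial isometries $v_1,\dots,v_n\in M$ with $v_iv_i^*\le p$ and $\sum_i v_i^*v_i=1$; writing any $x\in (M)_1$ as $x=\sum_{i,j} v_i^*(v_i x v_j^*)v_j$ with $v_ixv_j^*\in(pMp)_1$, one gets
\[
\|\beta_t(x)-x\|_2\;\le\; n^2\!\!\sup_{y\in(pMp)_1}\!\|\beta_t(y)-y\|_2\;+\;2n\sum_i\|\beta_t(v_i)-v_i\|_2,
\]
so uniform convergence on $(pMp)_1$ bootstraps to uniform convergence on all of $(M)_1$. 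Then testing against $x=u_g$ gives $\|\beta_t(u_g)-u_g\|_2^{\,2}=2(1-e^{-t^2\|c(g)\|^2})$ directly, and unboundedness of $c$ yields the contradiction without ever needing $A$ to be Cartan or the action to be mixing. This is both simpler than your approach and covers the lemma in the generality actually required by the paper.
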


\begin{lemma}\label{lem.cocycle-unif-on-delta-M}
Assume that $M=A\rtimes\Gamma$ is a type II$_1$ factor and $M\cong \rl\Lambda$ for some countable group $\La$. Define the comultiplication $\De:\rl\Lambda\to \rl\Lambda\tens \rl\Lambda$ by $\De(v_s)=v_s\otimes v_s$, for all $s\in \La$.

If $\id\otimes\beta_t\to \id$ uniformly on the unit ball of $q\De(M)q$, for some non-zero projection $q\in M\tens M$, then the cocycle $c$ must be bounded.
\end{lemma}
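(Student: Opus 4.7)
The plan is to reduce to Lemma~\ref{lema:cocycle must be bounded}, applied not to $M$ itself but to the II$_1$ factor $M \tens M = (A \tens A) \rtimes (\Gamma \times \Gamma)$, on which $\id \otimes \beta_t$ is precisely the Gaussian deformation associated to the $1$-cocycle $\widetilde c : \Gamma \times \Gamma \to \kk_\rz$, $\widetilde c(g, h) := c(h)$, into the orthogonal representation $(g,h) \mapsto \pi(h)$. Since boundedness of $\widetilde c$ is visibly equivalent to boundedness of $c$, it suffices to exhibit a non-zero projection $p \in M \tens M$ such that $\id \otimes \beta_t \to \id$ uniformly on the unit ball of $p(M \tens M) p$.

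As a first step I would test the hypothesis at the unit-ball elements $q \De(v_s) q = q(v_s \otimes v_s) q$, $s \in \Lambda$. Using that $\id \otimes \beta_t$ is $L^2$-isometric (being an automorphism of $M \tens \widetilde M$) and that $(\id \otimes \beta_t)(q) \to q$ in $\|\cdot\|_2$ by pointwise convergence, a routine triangle-inequality argument yields
\[
\sup_{s \in \Lambda}\, \bigl\| q\,\bigl(v_s \otimes (\beta_t(v_s) - v_s)\bigr)\,q \bigr\|_2 \to 0 \quad \text{as }t \to 0.
\]
Moreover, $\id \otimes \beta_t$ is the identity on $M \tens (D \tens A)$ because $\beta_t$ fixes $D \tens A$ pointwise; hence uniform convergence is free on the unit ball of $M \tens A$. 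The decisive ingredient is then Proposition~\ref{prop:comultiplication}(5), namely $\De(M)^\pr \cap (M \tens M) = \cz 1$: this irreducibility implies that, as a $\De(M)$-$\De(M)$-bimodule, $L^2(M \tens M)$ is cyclically generated by $\widehat q$, which allows one to produce finitely many partial isometries $w_i \in M \tens M$ with $w_i^* w_i \le q$ and $p := \sum_i w_i w_i^*$ a non-zero projection in $M \tens M$. Conjugating the estimate above by the $w_i$ and combining with the automatic uniform estimate on $M \tens A$ to cover the ``non-$\De(M)$'' directions of $M \tens M$ transfers the uniform convergence to the unit ball of $p (M \tens M) p$.

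The main obstacle I expect is precisely this last amplification step: upgrading uniform convergence on a corner of the irreducible subfactor $\De(M)$ into uniform convergence on a corner of the ambient II$_1$ factor $M \tens M$. It relies crucially on the bimodular consequences of the irreducibility provided by Proposition~\ref{prop:comultiplication}(5), which I would deploy either through a Pimsner--Popa basis for $\De(M) \subset M \tens M$ or through a direct averaging argument in the $\De(M)$-$\De(M)$-bimodule $L^2(M \tens M)$. Once the corner $p(M \tens M) p$ is obtained, Lemma~\ref{lema:cocycle must be bounded} applied to the crossed-product presentation $(A \tens A) \rtimes (\Gamma \times \Gamma)$ with cocycle $\widetilde c$ forces $\widetilde c$, and hence $c$, to be bounded.
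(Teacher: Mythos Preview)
Your amplification step has a genuine gap. The claim that irreducibility $\De(M)'\cap(M\tens M)=\cz 1$ makes $L^2(M\tens M)$ cyclic as a $\De(M)$-$\De(M)$-bimodule is false: for $\De(\rl\Lambda)\subset\rl\Lambda\tens\rl\Lambda$ the bimodule $\ell^2(\Lambda\times\Lambda)$ decomposes along conjugacy classes of $a^{-1}b$, and since $\Lambda$ is i.c.c.\ there are infinitely many pieces. More to the point, even granting partial isometries $w_i$ with $w_i^*w_i\le q$ and $p=\sum_i w_iw_i^*$, the sets $w_i\De(M)w_i^*$ together with $p(M\tens A)p$ do not approximate the unit ball of $p(M\tens M)p$ with any uniform control. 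Consider $p(1\otimes u_g)p$ for $g\in\Gamma$: it is orthogonal to $M\tens A$ when $g\ne e$, and its expression in terms of the $\De(v_s)$ involves the full Fourier expansion of $v_s$ over $\Gamma$, with no a priori bound. So ``covering the non-$\De(M)$ directions'' is exactly the problem you are trying to solve, and the inclusion $\De(M)\subset M\tens M$ has infinite index, so no finite Pimsner--Popa-type argument is available.

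The paper proceeds differently and avoids working in $M\tens M$ altogether. From uniform convergence on $\{q(v_s\otimes v_s)q:s\in\Lambda\}$ it invokes \cite[Lemmas~3.3 and~3.4]{Va10b}, which are slicing/transversality results: they produce a non-zero projection $q_1\in M$ (in the \emph{second} tensor leg) with $q\le 1\otimes q_1$ such that $\beta_t\to\id$ uniformly on $\{q_1 v_s q_1:s\in\Lambda\}$. Since $\{v_s\}$ generates $M$, a second application of \cite[Lemma~3.4]{Va10b} upgrades this to uniform convergence on the unit ball of $q_2Mq_2$ for some $q_2\ge q_1$, and then Lemma~\ref{lema:cocycle must be bounded} finishes. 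The point is that the tensor-leg structure of $\id\otimes\beta_t$ lets one descend to $M$ rather than amplify inside $M\tens M$.
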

\begin{proof}
Let $q\in M\tens M$ be a non-zero projection and assume that $\id\otimes\beta_t\to \id$
uniformly on the unit ball of $q\De(M)q$. Since $\{v_s\}_{s\in\La}$
is a group of unitaries generating $M$, we have that
$\id\otimes\beta_t\to \id$ uniformly on $\{q\De(v_s)q=q(v_s\otimes
v_s)q\mid s\in\La\}$. A combination of \cite[Lemma 3.3]{Va10b} and  \cite[Lemma 3.4]{Va10b} yields a non-zero projection $q_1\in M$ such that $q\le
1\otimes q_1$ and $\beta_t\to \id$ uniformly on $\{q_1v_sq_1\mid
v\in\La\}$. Since the group of unitaries $\{v_s\}_{s\in\La}$ generates $M$, applying again
\cite[Proposition 3.4]{Va10b}, it follows that $\beta_t\to \id$
uniformly on the unit ball of $q_2Mq_2$, where $q_2$ is a projection in $M$ satisfying $q_1\le q_2$. Since $q_1$ is non-zero, it
follows that $q_2$ is also non-zero and then, by Lemma
\ref{lema:cocycle must be bounded}, the cocycle $c$ must be bounded.
\end{proof}

In \cite[Theorem 4.5]{Pe09} and \cite[Theorem 2.5]{CP10}, using
Peterson's techniques of unbounded derivations, it has been proven
that whenever $\pi$ is mixing and $\beta_t\to\id$ uniformly on a von Neumann subalgebra
$Q\sset M$ such that $Q\nprec A$, then $\beta_t\to\id$ uniformly on
the normalizer of $Q$. An alternative proof of this result was given
by Vaes, in \cite{Va10b}, using the Gaussian automorphisms
$(\beta_t)_{t\in \rz}$. The precise formulation of this result goes
as follows.

\begin{theorem}[{\cite[Theorem 3.10]{Va10b}}]\label{thm:chifan peterson direct}
Assume that $\pi$ is a mixing representation. Let $p\in M$ be a
projection and $Q\sset pMp$ be a von Neumann subalgebra such that
$Q\nprec A$ and such that $\beta_t\to \id$ uniformly on the unit
ball of $Qq$, for some non-zero projection $q\in Q^\pr\cap pMp$.
Denote by $P$ the normalizer of $Q$ inside $pMp$. Then
$\beta_t\to\id$ uniformly on the unit ball of $Pr$, where $r$ is the
smallest central projection in $Z(P)$ satisfying $q\le r$.
\end{theorem}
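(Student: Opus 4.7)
The plan follows the standard normalizer-capturing scheme for malleable deformations developed by Popa and refined by Peterson, Chifan--Peterson and Vaes. Write $\hh := \rl^2(\Mtil)\ominus \rl^2(M)$, viewed as an $M$-$M$-bimodule, and recall that the Gaussian deformation is malleable: there is a period-two automorphism $\theta\in\aut(\Mtil)$ fixing $M$ pointwise and satisfying $\theta\beta_t\theta=\beta_{-t}$. Writing $\beta_{2t}(x)-x=\beta_t(\beta_t(x)-\theta(\beta_t(x)))$ for $x\in M$ and using that $\theta$ restricts to the identity on $M$, one obtains the transversality inequality
$$\norm{\beta_{2t}(x)-x}_2 \le 2\norm{\beta_t(x)-E_M(\beta_t(x))}_2,\qquad x\in M.$$
Consequently, proving that $\beta_t\to\id$ uniformly on the unit ball of $Pr$ reduces to proving $\norm{\beta_t(u)-E_M(\beta_t(u))}_2\to 0$ uniformly for $u$ ranging in the unitary group of $Pr$.

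Next I would extract twisted almost-$Q$-centrality from the normalizer relation. For $u\in\uu(P)$ and $a\in Q$, the inclusion $u^*au\in Q$ and the homomorphism property of $\beta_t$ yield $\beta_t(a)\beta_t(u)=\beta_t(u)\beta_t(u^*au)$. Multiplying on the right by $u^*$ and using the hypothesis that $\beta_t\to\id$ uniformly on $(Qq)_1$ (which transfers uniformly to $(Q(u^*qu))_1$ via a covering argument in $Q^\pr\cap pMp$, since $u$ also normalises $Q^\pr$ and $uqu^*$ has the same central support as $q$ in $\z(Q^\pr\cap pMp)$), a short calculation projecting separately onto $\rl^2(M)$ and onto $\hh$ shows that $\eta_u := P_{\hh}(\beta_t(u)u^*)$ satisfies
$$\norm{a\,\eta_u-\eta_u\,a}_2 \le \de(t),\qquad \de(t)\to 0\ \text{as}\ t\to 0,$$
uniformly in $a\in(Qq)_1$ and in the unitaries $u$ under consideration.

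Finally I would invoke the mixing of $\pi$: every symmetric power $\operatorname{Sym}^n(\pi)$ is mixing, so the Fock-type decomposition of $\rl^2(D)$ exhibits $\hh$ as a direct sum of $M$-$M$-bimodules each weakly contained in the basic-construction bimodule $\rl^2(\<M,e_A\>)$. The hypothesis $Q\nprec A$ rules out, via Popa's intertwining theorem, the existence of non-zero $Q$-central vectors in such bimodules, and a standard ultralimit argument promotes this to a quantitative uniform statement: any family in $\hh$ that is almost $Q$-central in the sense of the previous paragraph must have $\norm{\cdot}_2\to 0$ uniformly. Applied to $(\eta_u)$, combined with the reduction from the first paragraph, this gives $\norm{\beta_t(u)-E_M(\beta_t(u))}_2\to 0$ uniformly. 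The promotion from $q$ to $r$ is then by maximality: the set of projections $r^\pr\in \z(P)$ for which uniform convergence holds on $(Pr^\pr)_1$ is closed under suprema and dominates $q$, so admits a largest element which coincides with the projection $r$ of the statement. The main obstacle is the uniformity in $u$ in the mixing step: without mixing of $\pi$ one would recover only sequential convergence, and extracting a genuinely uniform bound requires the relative-mixing structure of $\hh$ to interact with $Q\nprec A$ through a uniform version of Popa's intertwining criterion; the projection bookkeeping, which naturally lives in $\z(Q^\pr\cap pMp)$ but must be transferred to $\z(P)$, is the other delicate point.
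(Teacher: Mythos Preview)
The paper does not supply a proof of this theorem: it is stated verbatim as a citation of \cite[Theorem 3.10]{Va10b} and used as a black box throughout Sections~5 and~6. There is therefore nothing in the paper to compare your argument against.

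That said, your outline is precisely the strategy of \cite{Va10b}: transversality via the period-two symmetry $\theta$ to reduce to controlling $(1-E_M)\beta_t(u)$, extraction of almost $Q$-central vectors in $\hh=\rl^2(\Mtil)\ominus \rl^2(M)$ from the normalizer relation, and elimination of such vectors using that $\hh$ is a mixing $M$-$M$-bimodule (because $\pi$ is mixing) together with $Q\nprec A$. Two points deserve more care than your sketch indicates. First, the almost-centrality you obtain is for $a\in Qq$, not all of $Q$, and the vectors $\eta_u$ you define do not obviously see the projection $q$; in \cite{Va10b} one rather works with $\beta_t(uq)$ (or equivalently with $\beta_t(u)q$) and exploits that $u$ normalizes both $Q$ and $Q'\cap pMp$, so that $uqu^*$ again lies in $Q'\cap pMp$ and carries the same uniform convergence. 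Second, the passage from ``no almost $Q$-central vectors when $Q\nprec A$'' to a \emph{uniform} statement over all $u\in\uu(P)$ is not a mere ultralimit: one must keep the bound on $\norm{\eta_u}$ uniform in $u$ (which follows from $\norm{\beta_t(uq)}_2\le\norm{q}_2$) and then argue by contradiction with a diagonal sequence. Your final supremum argument for promoting $q$ to $r$ is correct, though in \cite{Va10b} this is phrased directly in terms of the central support of $q$ in $\z(P)$.
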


\section{Normalizers of (relatively) amenable subalgebras}

Let $\Gamma$ be a countable group and let $\Gamma\act (A,\tau)$ be a
trace-preserving action such that the crossed product
$M:=A\rtimes\Gamma$ is a type II$_1$ factor. Assume that $M\cong
\rl\Lambda$, for some countable group $\Lambda$, and define the
corresponding comultiplication $\Delta:\rl\Lambda\to \rl\Lambda\tens
\rl\Lambda$.

The following two results are direct consequences of \cite[Theorem
3.1]{PV11} and \cite[Theorems A and 4.1]{Va13} and they are very
similar to \cite[Theorem 5.1]{BV12}.

\begin{theorem}\label{norm of abelian salg:Betti}

Assume that $\Gamma$ is non-amenable, weakly amenable and it admits
an unbounded 1-cocycle $c:\Gamma\to\kk_\rz$ into a mixing orthogonal
representation $\pi:\Gamma\to\mathcal{O}(\kk_\rz)$ that is weakly
contained into the left regular representation of $\Gamma$.

Let $Q\subset M\tens M$ be a von Neumann subalgebra such that
$\Delta(M)\subset\mathcal{N}_{M\tens M}(Q)^{\pr\pr}$ and such that
$Q$ is amenable relative to $M\tens A$. Then $Q\prec^f M\tens A$.
\end{theorem}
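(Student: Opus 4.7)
The plan is to invoke the Popa--Vaes-type dichotomy of \cite[Theorem 3.1]{PV11} (in the refined form of \cite[Theorems A and 4.1]{Va13}), and then exclude the ``amenable normalizer'' alternative using the rigidity of the comultiplication recorded in Proposition \ref{prop:comultiplication}.

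First I would rewrite $M\tens M$ as a crossed product adapted to the cited theorems. Since $M = A \rtimes G$ with $G := \Gamma\times\Gamma$, we get $M\tens M = (M\tens A)\rtimes G$ with $G$ acting on $M\tens A$ through $\id\tens\si$. The acting group $G$ is weakly amenable (weak amenability being stable under direct products), and the cocycle $(g_1,g_2)\mapsto (c(g_1),c(g_2))$ is an unbounded $1$-cocycle into the mixing orthogonal representation $\pi\oplus\pi$ of $G$, which is still weakly contained in the left regular representation of $G$. Thus the hypotheses of \cite[Theorem 3.1]{PV11} and \cite[Theorem 4.1]{Va13} are in place in our setting.

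Applying these theorems to $Q \subset M\tens M$, together with the assumption that $Q$ is amenable relative to $M\tens A$, yields a central projection $z \in \mathcal{Z}(\mathcal{N}_{M\tens M}(Q)^{\pr\pr})$ such that $Qz \prec^f M\tens A$ and $\mathcal{N}_{M\tens M}(Q)^{\pr\pr}(1-z)$ is amenable relative to $M\tens A$. It now suffices to show that $1-z = 0$. Since $\De(M) \subset \mathcal{N}_{M\tens M}(Q)^{\pr\pr}$, the projection $1-z$ commutes with $\De(M)$, and hence by Proposition \ref{prop:comultiplication}.5 it belongs to $\De(M)^\pr \cap (M\tens M) = \cz 1$. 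Were $1-z = 1$, the whole normalizer $\mathcal{N}_{M\tens M}(Q)^{\pr\pr}$, and in particular $\De(M)$, would be amenable relative to $M\tens A$; then Proposition \ref{prop:comultiplication}.2 would force $M$ to be amenable relative to $A$, i.e. $G = \Gamma\times\Gamma$ would be amenable, contradicting the non-amenability of $\Gamma$. Therefore $1-z = 0$, so $z = 1$, and the conclusion $Q \prec^f M\tens A$ follows.

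The main anticipated difficulty is pinning down the exact formulation of the PV/Vaes dichotomy that simultaneously produces the central projection $z$ together with both the $\prec^f$-intertwining of $Qz$ and the relative amenability of $\mathcal{N}_{M\tens M}(Q)^{\pr\pr}(1-z)$; if only the weaker ``either/or'' form is directly available, one would instead take the maximal projection $e\in Q^\pr\cap (M\tens M)$ with $Qe\prec M\tens A$ and apply the dichotomy to the corner cut by $1-e$. Once the right packaging is in place, the remainder of the proof is just the clean combination of Proposition \ref{prop:comultiplication}.2 and \ref{prop:comultiplication}.5 with the non-amenability of $\Gamma$.
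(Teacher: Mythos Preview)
Your high-level plan for the ``normalizer amenable'' branch is exactly right and matches the paper: if $P=\mathcal N_{M\tens M}(Q)''$ has a corner amenable relative to $M\tens A$, then since $\Delta(M)\subset P$ and $\Delta(M)'\cap(M\tens M)=\cz 1$ (Proposition~\ref{prop:comultiplication}.5), the whole $\Delta(M)$ is amenable relative to $M\tens A$, and Proposition~\ref{prop:comultiplication}.2 gives the contradiction with non-amenability of $\Gamma$.

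The gap is on the other branch. First, a setup issue: in Section~5 one has $M=A\rtimes\Gamma$ (a single copy of $\Gamma$), so $M\tens M=(M\tens A)\rtimes\Gamma$; your passage to $G=\Gamma\times\Gamma$ is not the ambient decomposition here, and in any case the representation $\pi\oplus\pi$ of $\Gamma\times\Gamma$ is \emph{not} mixing (take a vector in the first summand and let only the second coordinate go to infinity). More importantly, the black-box dichotomy you invoke is not available under the stated hypotheses. \cite[Theorems~A and 4.1]{Va13} concern amalgamated free products and HNN extensions, not cocycle assumptions, and the clean Popa--Vaes statement ``$Q\prec B$ or $\mathcal N(Q)''$ amenable relative to $B$'' for weakly amenable groups requires a \emph{proper} $1$-cocycle into a weakly-$\ell^2$ representation, whereas here the cocycle is only assumed unbounded. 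This is precisely why the paper cannot package the argument as you do: it applies the primitive dichotomy of \cite[Theorem~3.1]{PV11}, which yields either (Case~1) left $P$-amenability of the bimodule $\kk^\pi$---leading to relative amenability of $P$ and then your contradiction---or (Case~2) a uniform lower bound $\|\varphi_t(a)\|_2\ge\delta$ on $\uu(Q)$. Case~2 does \emph{not} give $Q\prec M\tens A$ directly; instead one must run the Gaussian deformation: \cite[Proposition~3.9]{Va10b} gives $\id\otimes\beta_t\to\id$ uniformly on a corner of $Q$, Theorem~\ref{thm:chifan peterson direct} (using that $\pi$ is mixing and the standing contradiction hypothesis $Q\nprec M\tens A$) promotes this to a corner of $P$, and then Lemma~\ref{lem.cocycle-unif-on-delta-M} contradicts unboundedness of $c$. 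Without this Gaussian/Chifan--Peterson step, the second alternative cannot be eliminated, so your proposed shortcut does not close.
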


\begin{proof}

Denote by $\kk^\pi$ the $M$-$M$-bimodule associated to $\pi$ and by
$(\fy_t)_{t\ge 0}$ the group of unital normal completely positive
maps associated to the 1-cocycle $c$.

Denote $P:=\mathcal{N}_{M\tens M}(Q)^{\pr\pr}$. By Proposition \ref{prop:comultiplication}.(5), we have that $\Delta(M)^\pr\cap M\tens M=\cz 1$
and since $\Delta(M)\sset P$, it suffices to prove that
$Q\prec M\tens A$.

Denote $\mm:=M\tens M$ and $\mathcal{A}:=M\tens A$, so that $\mm\cong
\mathcal{A}\rtimes\Gamma$. Assume, by contradiction, that $Q\nprec
\mathcal{A}=M\tens A$. Then, by \cite[Theorem 3.1]{PV11}, at least one of
the following must be true:
\begin{enumerate}
  \item The $\mm$-$\mm$-bimodule $\kk^\pi$ is left
  $P$-amenable;
  \item There exist $t, \de >0$ such that $\norm{\fy_t(a)}_2\ge \de$, for all $a\in \uu(Q)$.
\end{enumerate}

\bf{Case 1}. If $\bim{\mm}{\kk^\pi}{\mm}$ is left $P$-amenable, then
$\bim{\mm}{\kk^\pi}{\mm}$ is left $\Delta(M)$-amenable. Since $\pi$
is weakly contained in the left regular representation, it follows
that $\bim{\mm}{\kk^\pi}{\mm}\prec \bim{\mm}{(L^2(\mm)\otimes_\mathcal{A}
L^2(\mm))}{\mm}$, and therefore, by \cite[Corollary 2.5]{PV11}, we
get that $$\bim{\mm}{(L^2(\mm)\otimes_\mathcal{A} L^2(\mm))}{\mm} \rm{ is
left }\Delta(M)\rm{-amenable.}$$

By \cite[Proposition 2.4]{PV11} this further implies that
$\bim{\mm}{L^2(\mm)}{\mathcal{A}}$ is left $\Delta(M)$-amenable, i.e.
$\Delta(M)$ is amenable relative to $\mathcal{A}=M\tens A$. Finally, by
Proposition \ref{prop:comultiplication}.(2), we get that $M$ is
amenable relative to $A$, which contradicts the non-amenability of
$\Gamma$.

\bf{Case 2}. Assume that there exist $t, \de >0$ such that
$\norm{\fy_t(a)}_2\ge \de$, for all $a\in \uu(Q)$. Let
$(\be_t)_{t\in\rz}\in Aut(\widetilde{M})$ be the Gaussian
deformation on $M$, defined in Section 4.

Since $\pi$ is mixing, by \cite[Proposition 3.9]{Va10b}, there is a
non-zero projection $p\in Z(P)$ such that $$\id\otimes\be_t\to \id
\rm{ uniformly on the unit ball of }Qp. $$

Now, since moreover $Q\nprec \mathcal{A}$, it follows by Theorem
\ref{thm:chifan peterson direct} that
$$\id\otimes\be_t\to \id \rm{ uniformly on the unit ball of }Pq, $$ where $q\in Z(P)$ is the smallest projection such that $p\le q$.
In particular, $q$ is non-zero and since $\De(M)\sset P$ we get that $\id\otimes\be_t\to \id$ uniformly on the unit ball of $\De(M)q$, but this contradicts Lemma \ref{lem.cocycle-unif-on-delta-M}.
\end{proof}

\begin{theorem}\label{norm of abelian salg:AFP-HNN}
Assume that $\Gamma$ is an amalgamated free product
$\Gamma_1\ast_\Sigma\Gamma_2$ or an HNN extension
$\rm{HNN}(\Gamma_0,\Sigma,\theta)$ as in Theorem
\ref{thm.main-intro}.(1), respectively \ref{thm.main-intro}.(2).

Let $Q\subset M\tens M$ be a von Neumann subalgebra such that
$\Delta(M)\subset\mathcal{N}_{M\tens M}(Q)^{\pr\pr}$ and such that
$Q$ is amenable relative to $M\tens A$. Then $Q\prec^f M\tens A$.
\end{theorem}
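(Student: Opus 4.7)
The plan is to mirror the proof of Theorem \ref{norm of abelian salg:Betti}, replacing the appeal to the Popa--Vaes cocycle dichotomy \cite[Theorem 3.1]{PV11} by an AFP/HNN structural dichotomy. Write $\mathcal{M} := M \tens M$ and $\mathcal{A} := M \tens A$, so that $\mathcal{M} = \mathcal{A} \rtimes (\Gamma \times \Gamma)$, and set $P := \mathcal{N}_{\mathcal{M}}(Q)''$. Since $\Delta(M) \subseteq P$, one has $Q' \cap \mathcal{M} \subseteq P' \cap \mathcal{M} \subseteq \Delta(M)' \cap \mathcal{M} = \mathbb{C}1$ by Proposition \ref{prop:comultiplication}(5), and so, exactly as in the previous theorem, it suffices to prove $Q \prec \mathcal{A}$; I would argue this by contradiction, assuming $Q \nprec \mathcal{A}$.

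First, I would exploit the AFP decomposition $\Gamma = \Gamma_1 *_\Sigma \Gamma_2$ (resp.\ the HNN decomposition $\mathrm{HNN}(\Gamma_0, \Sigma, \theta)$) acting on the left copy of $\Gamma$ in $\Gamma \times \Gamma$ to endow $\mathcal{M}$ with a corresponding AFP (resp.\ HNN) decomposition of von Neumann algebras over the core $\mathcal{B} := \mathcal{A} \rtimes (\Sigma \times \Gamma)$. Since $Q$ is amenable relative to $\mathcal{A}$ and hence relative to $\mathcal{B}$, and since it is normalized by $P \supseteq \Delta(M)$, applying the normalizer dichotomy of \cite[Theorems A and 4.1]{Va13} in the AFP case (and its counterpart \cite[Theorem 7.1]{Io12a}, \cite[Lemma 8.2]{DI12} in the HNN case) produces two alternatives.

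In the first alternative, $P$ is itself amenable relative to $\mathcal{A}$; since $\Delta(M) \subseteq P$, this makes $\Delta(M)$ amenable relative to $M \tens A$, so by Proposition \ref{prop:comultiplication}(2) one concludes that $M$ is amenable relative to $A$, i.e.\ $\Gamma$ is amenable, contradicting Lemma \ref{lem.coamenable AFP HNN}. In the second (harder) alternative, $Q$ intertwines into one of the vertex algebras $\mathcal{A} \rtimes (\Gamma_i \times \Gamma)$. Here the malnormality of $\Sigma$ in $\Gamma_1$ (resp.\ of $\{\Sigma, \theta(\Sigma)\}$ in $\Gamma_0$) lets one descend this intertwining to the core $\mathcal{A} \rtimes (\Sigma \times \Gamma)$, and then, running the same dichotomy a second time using the AFP/HNN structure on the right copy of $\Gamma$, to $Q \prec \mathcal{A} \rtimes (\Sigma \times \Sigma)$; a final malnormality-plus-normalizer argument, exploiting the infinite index of $\Sigma$ in $\Gamma$ and that $\Delta(M)$ normalizes $Q$, upgrades this to $Q \prec \mathcal{A}$---the desired contradiction. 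The main obstacle will be precisely this second alternative: the iterated descent between the two $\Gamma$-factors, together with the final upgrade, must be carried out with careful bookkeeping of how the malnormality hypothesis interacts with the tensor factorization and with the structure of $\Delta(M)$ inside $P$.
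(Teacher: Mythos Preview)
Your setup contains a basic error that cascades through the rest of the argument. In Section~5 the ambient algebra is $M = A \rtimes \Gamma$ (a single copy of $\Gamma$), so $\mathcal{M} = M \tens M = (M \tens A) \rtimes \Gamma = \mathcal{A} \rtimes \Gamma$, \emph{not} $\mathcal{A} \rtimes (\Gamma \times \Gamma)$. There is no ``left copy'' and ``right copy'' of $\Gamma$ over $\mathcal{A}$, and consequently the whole iterated-descent strategy (first to $\mathcal{A} \rtimes (\Sigma \times \Gamma)$, then to $\mathcal{A} \rtimes (\Sigma \times \Sigma)$, then to $\mathcal{A}$) is built on a decomposition that does not exist. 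A minor related slip: the inclusion $Q' \cap \mathcal{M} \subseteq P' \cap \mathcal{M}$ goes the wrong way; what you need (and what the paper uses) is $P' \cap \mathcal{M} \subseteq \Delta(M)' \cap \mathcal{M} = \mathbb{C}1$, which indeed reduces $\prec^f$ to $\prec$.

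You also misstate the output of \cite[Theorem~A]{Va13}: it is a \emph{trichotomy}, not a dichotomy. With $\mathcal{M} = (\mathcal{A} \rtimes \Gamma_1) \ast_{\mathcal{A} \rtimes \Sigma} (\mathcal{A} \rtimes \Gamma_2)$, the three alternatives are (i) $Q \prec \mathcal{A} \rtimes \Sigma$, (ii) $P \prec \mathcal{A} \rtimes \Gamma_i$ for some $i$, or (iii) $P$ is amenable relative to $\mathcal{A} \rtimes \Sigma$ (not relative to $\mathcal{A}$). Case~(ii), which you omit, is dispatched immediately: it gives $\Delta(M) \prec \mathcal{A} \rtimes \Gamma_i$, contradicting Proposition~\ref{prop:comultiplication}(1) since $[\Gamma:\Gamma_i] = \infty$. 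Case~(iii) gives that $\Sigma$ is co-amenable in $\Gamma$ (not that $\Gamma$ is amenable), contradicting Lemma~\ref{lem.coamenable AFP HNN}. The substantive case is~(i): here the paper does \emph{not} iterate, but instead observes that $\Sigma$ is relatively malnormal in $\Gamma$ (witnessed by $\Lambda = \Gamma_2$ in the AFP case, and by \cite[Corollary~4]{KS70} in the HNN case), and then applies \cite[Lemma~6.4]{Va10b}: if $Q \prec \mathcal{A} \rtimes \Sigma$ but $Q \nprec \mathcal{A}$, relative malnormality forces $P \prec \mathcal{A} \rtimes \Lambda$, hence $\Delta(M) \prec \mathcal{A} \rtimes \Lambda$, again contradicting Proposition~\ref{prop:comultiplication}(1). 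This single normalizer-control step replaces your entire ``descent'' program.
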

\begin{proof}

Denote $P:=\nn_{M\tens M}(Q)^{\bi}$ and $\mathcal{A}:=M\tens A$. Suppose
first that $\Gamma=\Gamma_1\ast_\Sigma\Gamma_2$ is non-degenerate and
$\Sigma$ is malnormal in $\Gamma_1$ and notice that $\Sigma$ is
relatively malnormal in $\Gamma$ (indeed, $\Gamma_2$ has infinite
index in $\Gamma$ and $\Sigma\cap g\Sigma g^{-1}$ is finite, for all
$g\in \Gamma\setminus\Gamma_2$). Remark also that we can write
$M\tens M$ as an amalgamated free product
$$M\tens M=(\mathcal{A}\rtimes\Gamma_1)\ast_{\mathcal{A}\rtimes\Sigma}(\mathcal{A}\rtimes\Gamma_2).$$

By \cite[Theorem A]{Va13}, at least one of the following statements
is true:
\begin{itemize}
\item $Q\prec \mathcal{A}\rtimes\Sigma$;
\item $P\prec \mathcal{A}\rtimes\Gamma_i$, for some $i=1$ or $2$;
\item $P$ is amenable relative to $\mathcal{A}\rtimes \Sigma$.
\end{itemize}
If $Q\prec \mathcal{A}\rtimes\Sigma$, then we get that $Q\prec \mathcal{A}$. Indeed,
since $\Sigma$ is relatively malnormal in $\Gamma$ there is an
infinite index subgroup $\Lambda<\Gamma$ such that $\Sigma\cap
g\Sigma g^{-1}$ is finite, for all $g\in\Gamma\setminus\Lambda$.
Assume, by contradiction, that $Q\nprec \mathcal{A}$. Then, by \cite[Lemma
6.4]{Va10b}, it follows that $P\prec \mathcal{A}\rtimes\Lambda$, and hence
$\Delta(M)\prec \mathcal{A}\rtimes\Lambda$, which, by Proposition
\ref{prop:comultiplication}.(1), is not possible since $\Lambda$ has
infinite index in $\Gamma$. Thus we get that $Q\prec \mathcal{A}=M\tens A$.
By Proposition \ref{prop:comultiplication}.(5), we have that $\Delta(M)^\pr\cap M\tens
M=\cz 1$ and since moreover $\Delta(M)\sset P$, we get indeed
that $Q\prec^f M\tens A$.

If $P\prec \mathcal{A}\rtimes\Gamma_i$, for some $i=1$ or $2$, then
$\Delta(M)\prec\mathcal{A}\rtimes\Gamma_i$, which contradicts Proposition
\ref{prop:comultiplication}.(1),  since $\Gamma_i$ has infinite
index in $\Gamma$, for all $i=1,2$.

If $P$ is amenable relative to $\mathcal{A}\rtimes\Sigma$, for some $i=1$ or
$2$, then $\Delta(M)$ is amenable relative to $\mathcal{A}\rtimes\Sigma$. By
Proposition \ref{prop:comultiplication}.(2) it follows that $M$ is
amenable relative to $A\rtimes\Sigma$, but this further implies that
$\Sigma$ is co-amenable in $\Gamma$, which contradicts Lemma \ref{lem.coamenable AFP HNN}.

Assume now that $\Gamma=\rm{HNN}(\Gamma_0,\Sigma,\theta)=\<\Gamma_0, t\mid t\Sigma t^{-1}=\theta(\Sigma)\>$ is
non-degenerate and $\{\Sigma, \te(\Sigma)\}$ is malnormal in $\Gamma_0$. By \cite[Corollary 4, page 954]{KS70} we have that $\Sigma$ is malnormal in $\Gamma$, so in particular, $\Sigma$ is relatively malnormal in $\Gamma$. Using the construction in \cite[Section 3]{FV10}, we can write $M\tens M$ as an HNN extension
HNN$(\mathcal{A}\rtimes\Gamma_0,\mathcal{A}\rtimes\Sigma,\Theta)$, and hence, by
\cite[Theorem 4.1]{Va13}, at least one of the following statements
is true:
\begin{itemize}
\item $Q\prec \mathcal{A}\rtimes\Sigma$;
\item $P\prec \mathcal{A}\rtimes\Gamma_0$;
\item $P$ is amenable relative to $\mathcal{A}\rtimes \Sigma$.
\end{itemize}
The last two alternatives cannot hold, as in the previous case, thus
we have $Q\prec \mathcal{A}\rtimes\Sigma$, which implies that $Q\prec^f
M\tens A$, since $\Sigma<\Gamma$ is relatively malnormal.
\end{proof}

The next result is an analogue of \cite[Corollary 2.12]{Io12a}.
Since the first part of the proof goes exactly as in Ioana's proof,
we will be rather brief, pointing out the arguments that are
different.

\begin{theorem}\label{norm of salg amenable rel to inner part}
Assume that $\Gamma$ is non-amenable and it admits an unbounded
1-cocycle $c$ into the left regular representation of $\Gamma$.

Let $\Sigma<\Gamma$ be a subgroup and assume that the cocycle $c$ is
bounded on $\Sigma$. Denote $M_1:=A\rtimes \Sigma$ and let $Q\sset
pMp$ be a von Neumann subalgebra that is amenable relative to $M_1$,
for some non-zero projection $p \in M$. Denote
$P:=\nn_{pMp}(Q)^{\bi}$. Consider the Gaussian deformation
$(\beta_t)_{t\in\rz}\in\aut(\mtil)$ defined in Section 4. Then at
least one of the following statements holds:
\begin{itemize}
\item There is a non-zero projection $q\in Q^\pr\cap pMp$ such that $Qq$ is amenable relative to
$A$;
\item There is a  non-zero projection $r\in Z(P)$ such that $\beta_t\to \id$ uniformly on the unit ball of $Pr$.
\end{itemize} \end{theorem}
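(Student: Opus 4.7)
The plan is to mirror the argument of \cite[Corollary 2.12]{Io12a}, with the two crucial ingredients being (a) uniform convergence of the Gaussian deformation on the unit ball of $M_1$, and (b) a transversality-type extraction of approximately $Q$-central vectors in the bimodule $\mathcal{K}^\pi\otimes L^2(M)$ associated to the left regular representation $\pi=\lambda_\Gamma$.

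The first step is to translate the hypothesis ``$c$ is bounded on $\Sigma$'' into uniform convergence $\beta_t\to\id$ on the unit ball of $M_1$. Indeed, for $b\in A$, $g\in\Sigma$, one has $\beta_t(bu_g)=b(\omega(tc(g))\otimes 1)u_g$ and $\norm{\omega(tc(g))-1}_2^2=2(1-\exp(-t^2\norm{c(g)}^2))$, which tends to $0$ uniformly in $g\in\Sigma$ as $t\to 0$, because $\sup_{g\in\Sigma}\norm{c(g)}<\infty$. A Fourier-expansion argument on $M_1=A\rtimes\Sigma$ together with Kaplansky density then gives $\sup_{x\in(M_1)_1}\norm{\beta_t(x)-x}_2\to 0$ as $t\to 0$.

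The second step is the key dichotomy. Since $Q$ is amenable relative to $M_1$, there is a net $(\xi_i)$ of vectors in the $pMp$-$M$-bimodule $p L^2\langle M,e_{M_1}\rangle$ that is asymptotically $Q$-central, trace-approximating, and uniformly $M$-bounded. For each small $t>0$, consider the perturbed vectors $\xi_i^t$ obtained by applying $\beta_t$ to the $M$-valued ``coefficients'' of $\xi_i$. Using the uniform convergence of $\beta_t$ on $M_1$ (Step 1) and a standard bimodule-flipping trick via the malleability of the Gaussian deformation, the error introduced by $\beta_t$ lives in the orthogonal complement $L^2(\widetilde M)\ominus L^2(M)$, and its first-order part is an approximately $Q$-central net in the $M$-$M$-bimodule $\mathcal{K}^\pi\otimes L^2(M)$. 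Two mutually exclusive possibilities arise:
\begin{itemize}
\item[(a)] $\ds\sup_{u\in\mathcal{U}(Q)}\norm{\beta_t(u)-u}_2\to 0$ as $t\to 0$, i.e. $\beta_t\to\id$ uniformly on the unit ball of $Q$;
\item[(b)] The extracted net in $\mathcal{K}^\pi\otimes L^2(M)$ has a persistent nonzero ``mass'', so Lemma \ref{lema:criterion for left am bimodule} yields a non-zero projection $q\in Q^\pr\cap pMp$ such that the $qMq$-$M$-bimodule $q(\mathcal{K}^\pi\otimes L^2(M))$ is left $Qq$-amenable. Because $\pi$ is the left regular representation, $\mathcal{K}^\pi$ is (weakly contained in) the $A$-coarse bimodule $L^2(M)\otimes_A L^2(M)$, so left $Qq$-amenability of this bimodule is exactly the statement that $Qq$ is amenable relative to $A$ — the first alternative.
\end{itemize}

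In case (a), we proceed with a final case split. If $Q\prec A$, then by Theorem \ref{thm:intertwining} there is a non-zero projection $q\in Q^\pr\cap pMp$ and a partial isometry intertwining $Qq$ into a matrix amplification of $A$; since $A$ is abelian and hence amenable, $Qq$ is amenable relative to $A$, giving the first alternative. Otherwise $Q\nprec A$; the representation $\pi=\lambda_\Gamma$ is mixing (as $\Gamma$ is infinite by non-amenability), so Theorem \ref{thm:chifan peterson direct} applies to $Q\sset pMp$ and yields a non-zero central projection $r\in Z(P)$ with $\beta_t\to\id$ uniformly on the unit ball of $Pr$ — the second alternative. The main obstacle is the technical bookkeeping in Step 2: carefully extracting the approximately $Q$-central net in $\mathcal{K}^\pi\otimes L^2(M)$ from the interaction of relative $M_1$-amenability with the uniform convergence $\beta_t|_{M_1}\to\id$, using the Fock-space decomposition of $L^2(\widetilde M)\ominus L^2(M)$ and the Popa transversality inequality for the malleable deformation $(\beta_t)$. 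This mirrors the analogous passage in \cite[Corollary 2.12]{Io12a}.
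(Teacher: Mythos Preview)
Your outline follows the same architecture as the paper's proof, and the endgame in case (a) is exactly what the paper does. But two technical points in Step 2 need adjustment before the argument goes through, and they are not cosmetic.

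First, ``uniform convergence of $\beta_t$ on $(M_1)_1$'' is not what you actually need. To apply $\beta_t$ to the net $(\xi_i)\subset L^2\langle M,e_{M_1}\rangle$ you must extend $\beta_t$ to an automorphism of $\langle\widetilde M,e_{M_1}\rangle$, and that extension (sending $e_{M_1}\mapsto e_{M_1}$) is only well defined when $\beta_t$ is literally the identity on $M_1$. The paper obtains this by first replacing $c$ by the cohomologous cocycle $c-(\xi-\pi(\cdot)\xi)$, which vanishes on $\Sigma$ (any bounded cocycle is a coboundary). Your ``apply $\beta_t$ to the $M$-valued coefficients'' is ill defined otherwise: the map $xe_{M_1}y\mapsto\beta_t(x)e_{M_1}y$ requires $\beta_t(xm)=\beta_t(x)m$ for $m\in M_1$.

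Second, the bimodule in case (b) is misidentified. The perpendicular part $\eta_i^t=\xi_i^t-e_{\mathcal H}(\xi_i^t)$ lives in $L^2\langle\widetilde M,e_{M_1}\rangle\ominus\mathcal H\cong L^2(\widetilde M\ominus M)\otimes_{M_1}L^2(\widetilde M)$, not in $\mathcal K^\pi$ or ``$\mathcal K^\pi\otimes L^2(M)$'' (the latter is not an $M$-$M$-bimodule in any natural way). After Lemma~\ref{lema:criterion for left am bimodule} yields left $Qq$-amenability of this bimodule, you still need \cite[Proposition 2.4]{PV11} to strip the right $\otimes_{M_1}L^2(\widetilde M)$ factor and get left $Qq$-amenability of $\bim{qMq}{qL^2(\widetilde M\ominus M)}{M_1}$; only then does the weak containment $L^2(\widetilde M\ominus M)\prec L^2(M)\otimes_A L^2(M)$ (from \cite[Lemma 3.5]{Va10b}) give amenability relative to $A$. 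The Fock first-level heuristic is misleading here: one never isolates $\mathcal K^\pi$ in this argument.

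Finally, a minor point: the dichotomy in case (a) is not ``$\beta_t\to\id$ uniformly on $\mathcal U(Q)$'' versus (b). What Case~1 of the paper actually produces is a uniform lower bound $\|E_M(\beta_t(a))\|_2>\delta$ for $a\in\mathcal U(Q)$, which \cite[Proposition 3.9]{Va10b} upgrades to uniform convergence on a \emph{corner} $Qq_0$ with $q_0\in Z(P)$; Theorem~\ref{thm:chifan peterson direct} then applies to that corner. Your subsequent case split ($Q\prec A$ or not) is fine once this is in place.
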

\begin{proof}

We may assume that the cocycle $c$ is zero on $\Sigma$. Since $Q$ is
amenable relative to $M_1$ inside $M$, there exists a net
$(\xi_i)_{i\in I}\in L^2(p\langle M,e_{M_1}\rangle p)$ such that
\begin{equation}\label{lema 5.3 eq 1}    \ds\lim_{i\in I}{\norm{a\xi_i-\xi_ia}_2=0,\quad \rm{for all}\; a\in
Q},\end{equation} and
\begin{equation}\label{lema 5.3 eq 2} \ds\lim_{i\in I}{\left< x\xi_i,\xi_i\right>}=\lim_{i\in I}{\langle \xi_ix,\xi_i\rangle}=\tau(x),\quad \rm{for all}\; x\in pMp. \end{equation}

Since $c$ is zero on $\Sigma$, then $\beta_t$ is identity on
$M_1=A\rtimes \Sigma$ and hence, we can extend $\beta_t$ to a trace-preserving automorphism $\beta_t$ of the basic construction $\langle \mtil,
e_{M_1}\rangle$, by letting $\beta_t(e_{M_1})=e_{M_1}$.

Denote by $\hh$ the L$^2$-closed linear span of the set
$Me_{M_1}\mtil:=\{xe_{M_1}y\;;\; x\in M, \; y\in \mtil\}$ and let
$e_\hh$ be the orthogonal projection of $\rl^2(\langle
\mtil,e_{M_1}\rangle)$ onto $\hh$.

Fix $t\in \rz$. Since, by construction, one can see $\rl^2(\<M,e_{M_1}\>)$ as a subspace of $\rl^2(\<\Mtil,e_{M_1}\>)$, we may define the net $(\xi_i^t)_{i\in I}
\sset \rl^2(\<\Mtil,e_{M_1}\>)$ by letting $\xi_i^t:=\beta_t(\xi_i)$, for all $i\in I$. We prove now that the following relations hold:
\begin{equation}\label{lema 5.3 eq 3} \ds\lim_{i\in I}{\norm{x\xi_i^t}_2}\le\norm{x}_2 \rm{ and }
\lim_{i\in I}{\norm{\xi_i^t x}_2}\le\norm{x}_2,\end{equation}
\begin{equation}\label{lema 5.3 eq 4} \ds\limsup_{i\in I}{ \norm{x e_\hh(\xi_i^t)}_2}\le
\norm{x}_2\end{equation} and
\begin{equation}\label{lema 5.3 eq 5} \ds\limsup_{i\in I}{ \norm{a\xi_i^t-\xi_i^ta}_2}\le 2\norm{a-\beta_t(a)}_2,\end{equation}
for every $a\in Q$ and for every $x\in \mtil$.

Indeed, since $\be_t$ is trace-preserving, $\xi_i\in p\hh$ and $(\mtil\ominus M)\hh\perp\hh$, by using the first part of \eqref{lema 5.3 eq 2}, we get that

$$\ds\lf.\barr{ll} \ds\lim_{i\in I}{\norm{x\xi_i^t}_2^2}&=\ds\lim_{i\in I}{\left< x\be_t(\xi_i),x\be_t(\xi_i) \right>}\\\\
& =\ds\lim_{i\in I}{\left<\be_t^{-1}(x^*x)\xi_i,\xi_i \right>}\\\\
& =\ds\lim_{i\in I}{\left< pE_M(\be_t^{-1}(x^*x))p\xi_i,\xi_i \right>}\\\\
&= \tau(pE_M(\be_t^{-1}(x^*x))p)\\\\
&= \tau(x^*x\be_t(p))\le \norm{x}_2^2. \earr\rg.$$

The second inequality of \eqref{lema 5.3 eq 3} follows similarly
using the second part of the equation \eqref{lema 5.3 eq 2}.

Now, since $(\mtil\ominus M)\hh\perp\hh$ and $\hh$ is a left
$M$-module, it follows that
$$\ds\lf.\barr{ll} \norm{xe_\hh(\xi_i^t)}_2^2&= \left< xe_\hh(\xi_i^t),xe_\hh(\xi_i^t) \right>\\\\
&=\left< E_M(x^*x)e_\hh(\xi_i^t),e_\hh(\xi_i^t)  \right>\\\\
&= \left< e_\hh(E_M(x^*x)^{1/2}\xi_i^t),e_\hh(E_M(x^*x)^{1/2}\xi_i^t) \right>\\\\
&= \norm{e_\hh(E_M(x^*x)^{1/2}\xi_i^t)}_2^2\\\\
& \le \norm{E_M(x^*x)^{1/2}\xi_i^t}_2^2, \earr\rg.$$ and hence,
passing to $\limsup$ and using \eqref{lema 5.3 eq 3}, we get that

$$\ds\limsup_{i\in I}{\norm{xe_\hh(\xi_i^t)}_2}\le \norm{E_M(x^*x)^{1/2}}_2=\norm{x}_2.$$

Finally, to prove \eqref{lema 5.3 eq 5}, we have that
$$\norm{a\xi_i^t-\xi_i^ta}_2\le \norm{(a-\be_t(a))\xi_i^t}_2+\norm{\xi_i^t(a-\be_t(a))}_2+\norm{a\xi_i-\xi_ia}_2.$$
Passing to $\limsup$ and using \eqref{lema 5.3 eq 3} and \eqref{lema
5.3 eq 1}, we get that
$$\limsup_{i\in I}{ \norm{a\xi_i^t-\xi_i^ta}_2\le 2\norm{a-\beta_t(a)}_2}.$$

For any $t>0$, consider the net
$\eta_i^t:=\xi_i^t-e_\hh(\xi_i^t)$ and denote
$\delta_i^t:=\norm{\eta_i^t}_2$. We have now two different cases
which will be treated separately.

\bf{Case 1}. Assume that there exists a $t>0$ such that
$\ds\limsup_{i\in I}{\delta_i^t}<\frac{5\norm{p}_2}{11}$.

Fix $a\in\uu(Q)$ and denote $P:=\nn_{pMp}(Q)^{\bi}$. Since
$(\mtil\ominus M)\hh\perp\hh$ and $\hh$ is a left $M$-module, it
follows that
$$\ds\lf.\barr{ll} \norm{E_M(\be_t(a))\xi_i^t}_2& \ge \norm{e_\hh(E_M(\be_t(a))\xi_i^t)}_2\\\\
&=\norm{e_\hh(\be_t(a)e_\hh(\xi_i^t))}_2\\\\
&\ge \norm{e_\hh(\be_t(a)\xi_i^t)}_2-\de_i^t\\\\
&\ge \norm{e_\hh(\xi_i^t
\be_t(a))}_2-\norm{a\xi_i-\xi_ia}_2-\de_i^t. \earr\rg.$$

On the other hand, since $\be_t$ is trace-preserving and $\hh$ is also a right $\mtil$-module, we
have that
$$\norm{e_\hh(\xi_i^t\be_t(a))}_2 =\norm{e_\hh(\xi_i^t)\be_t(a)}_2\ge \norm{\xi_i^t\be_t(a)}_2-\de_i^t=\norm{\xi_i a}_2-\de_i^t.$$
Thus $$\norm{E_M(\be_t(a))\xi_i^t}_2\ge\norm{\xi_i
a}_2-\norm{a\xi_i-\xi_ia}_2-2\de_i^t, $$ and hence, by \eqref{lema 5.3 eq 1}, \eqref{lema 5.3 eq 2} and \eqref{lema 5.3 eq 3},
$$\ds\lf.\barr{ll}\norm{E_M(\be_t(a))}_2& \ge \ds\lim_{i\in I}{\norm{E_M(\be_t(a))\xi_i^t}_2}\\\\
&\ge\ds\liminf_{i\in I}{ \left( \norm{\xi_i a}_2-\norm{a\xi_i-\xi_ia}_2-2\de_i^t \right)}\\\\
&= \norm{a}_2-2\ds\limsup_{i\in I}{\de_i^t}\\
&= \norm{p}_2-2\ds\limsup_{i\in I}{\de_i^t}>\ds\frac{\norm{p}_2}{11}. \earr\rg.$$

Therefore, for all $a\in \uu(Q)$, we have that
$$\norm{E_M(\beta_t(a))}_2> \ds\frac{\norm{p}_2}{11},$$ and hence, by \cite[Proposition
3.9]{Va10b}, there exists a non-zero projection $q_0\in Z(P)$ such
that \begin{equation}\label{lema 5.3 eq 6}  \beta_t\to \id \rm{
uniformly on the unit ball of } Qq_0.  \end{equation}

Furthermore, by \eqref{lema 5.3 eq 6} and Theorem \ref{thm:chifan
peterson direct}, it follows that
\begin{itemize}
\item either $Q\prec_M A$,
\item or $\beta_t\to \id \rm{ uniformly on the unit ball of } Pr$, where $r\in Z(P)$ is the smallest projection such that $q_0\le r$.
\end{itemize}
Note that, by \cite[Remark 2.2]{Io12a}, the first alternative yields
a non-zero projection $q\in Q^\pr\cap pMp$ such that $Qq \rm{ is
amenable relative to }A$, so the proof in Case 1 is done.

\bf{Case 2}. Suppose that, for all $t>0$, we have $\ds\limsup_{i\in
I}{\delta_i^t}\ge \frac{5\norm{p}_2}{11}$.

In this case we prove that there exists a net $(\eta_j)_{j\in
J}\sset \rl^2(\langle \mtil, e_{M_1}\rangle)\ominus \hh$ that
satisfies the following three conditions:
 \begin{equation}\label{lema 5.3 eq 7}\ds\limsup_{j\in J}{\norm{p\eta_j}}_2>0,\end{equation}
 \begin{equation}\label{lema 5.3 eq 8}\ds\limsup_{j\in J}{\norm{x\eta_j}}_2\le 2\norm{x}_2, \rm{ for all }x\in
 pMp,\end{equation} and
 \begin{equation}\label{lema 5.3 eq 9}\ds\lim_{j\in J}{\norm{a\eta_j-\eta_j a}_2}= 0, \rm{ for all }a\in Q.\end{equation}

Let $J$ denote the set of triples $j:=(X, Y, \eps)$ consisting of
finite subsets $X\sset Q$, $Y\sset pMp$ and $\eps>0$. Fix such a
triple $j=(X, Y, \eps)$. Since $\be_t$ converges to identity,
L$^2$-pointwise on $M$, we can find a $t>0$ such that, for all
$a\in Q$, we have
\begin{equation}\label{lema 5.3 eq 10}  \norm{a-\be_t(a)}_2<\eps/2 \rm{ and }\norm{p-\be_t(p)}_2<\norm{p}_2/10. \end{equation}

Let $a\in X$ and $x\in Y$. Since $\eta_i^t=(1-e_\hh)\xi_i^t$ and
$a\in Q$, we get by \eqref{lema 5.3 eq 4} that
$$\norm{a\eta_i^t-\eta_i^ta}_2\le \norm{a\xi_i^t-\xi_i^ta}_2,$$
and passing to $\limsup$ and using \eqref{lema 5.3 eq 5} and
\eqref{lema 5.3 eq 10}, it follows that
\begin{equation}\label{lema 5.3 eq 11}\ds\limsup_{i\in I}{\norm{a\eta_i^t-\eta_i^ta}_2}<\eps.\end{equation}
Moreover, by \eqref{lema 5.3 eq 3} and \eqref{lema 5.3 eq 4}, we
have that
\begin{equation}\label{lema 5.3 eq 12}\ds\limsup_{i\in I}{\norm{x\eta_i^t}_2}\le 2\norm{x}_2,\end{equation}
and by \eqref{lema 5.3 eq 3}, \eqref{lema 5.3 eq 2} and \eqref{lema
5.3 eq 10}, we also get that
\begin{equation}\label{lema 5.3 eq 13}
\ds\lf.\barr{ll}\ds\limsup_{i\in I}{\norm{p\eta_i^t}_2}&\ge \ds\limsup_{i\in I}{\left(\norm{p\xi_i^t}_2-\norm{e_\hh(\xi_i^t)}_2\right)}\\\\
&= \norm{p\be_t(p)}_2-\ds\liminf_{i\in I}{\norm{e_\hh(\xi_i^t)}_2}\\\\
&\ge\norm{p\be_t(p)}_2-\left(\norm{p}_2^2 -\ds\limsup_{i\in I}{\norm{\eta_i^t}_2^2}\right)^{1/2}\\\\
&>\left(\ds\frac{9}{10}-\ds\frac{4\sqrt{6}}{11}\right)\norm{p}_2>0.
\earr\rg.\end{equation}

Combining \eqref{lema 5.3 eq 11}, \eqref{lema 5.3 eq 12} and
\eqref{lema 5.3 eq 13} it follows that, for some $i\in I$, the
vectors $\eta_j:=\eta_i^t$ satisfy the required conditions
\eqref{lema 5.3 eq 7}, \eqref{lema 5.3 eq 8} and \eqref{lema 5.3 eq
9}.

Thus, by Lemma \ref{lema:criterion for left am bimodule}, there
exists a non-zero projection $q\in Q^\pr\cap pMp$ such that the
$qMq$-$M$-bimodule $$ q\rl^2(\langle \mtil, e_{M_1}\rangle)\ominus
\hh \rm{ is left }Qq\rm{-amenable}.$$

By the definition of $\hh$ we have that, as $M$-$M$-bimodules,
$$\rl^2(\langle \mtil, e_{M_1}\rangle)\ominus \hh \cong \rl^2(\mtil\ominus M)\otimes_{M_1}\rl^2(\mtil),$$

so, it follows that $q\rl^2(\mtil\ominus
M)\otimes_{M_1}\rl^2(\mtil)$ is left $Qq$-amenable.

By \cite[Proposition 2.4]{PV11}, it follows that the
$qMq$-$M_1$-bimodule $q\rl^2(\mtil\ominus M)$ is left $Qq$-amenable.
Since $\rl^2(\mtil\ominus M)$ is weakly contained in $\rl^2(
M)\otimes_A \rl^2( M)$ (see for instance \cite[Lemma 3.5]{Va10b}),
then by \cite[Corollary 2.5]{PV11} and \cite[Proposition 2.4]{PV11},
 we get that the $qMq$-$A$-bimodule $q\rl^2( M)$ is left $Qq$-amenable.
Thus, by Remark \ref{rem:rel. amen. - left amen. bimodule}, this
means that $Qq$ is amenable relative to $A$, for some non-zero
projection $q\in Q^\pr\cap pMp$, and this concludes the proof of
Case 2.
\end{proof}

\section{Proof of the main result}

This whole section will be devoted to prove that, in the setting we
shall describe below, the three conditions from \eqref{main
relations}  hold, and thus we may apply results in \cite{BV12} to
conclude the proof of Theorem \ref{thm.main-intro}.

Throughout this section, $\Gamma$ will be a countable group as in Theorem
\ref{thm.main-intro}, namely:
\begin{enumerate}
\item $\Gamma=\Gamma_1\ast_\Sigma\Gamma_2$ non-degenerate, with $\Sigma$ malnormal in
$\Gamma_1$;

\item $\Gamma=\rm{HNN}(\Gamma_0,\Sigma,\theta)$ non-degenerate, with $\{\Sigma, \te(\Sigma)\}$ malnormal in
$\Gamma_0$;

\item $\Gamma$ is i.c.c., weakly amenable, has positive first $\ell^2$-Betti number and admits a bound on the order of
its finite subgroups.
\end{enumerate}

Let $H=\zz/n\zz$, with $n=2$ or $3$, and denote $A_0:=\rl H$, $A:={A_0}^{(\Gamma)}$. Consider the generalized Bernoulli action $G:=\Gamma\times\Gamma \act
A$, and put $M:=A \rtimes (\Gamma\times\Gamma)$.
Notice that, by \cite[Theorem 6.1.(c)]{BV12}, $M$ is a type II$_1$
factor.

Let $\Lambda$ be countable group such that $M\cong \rl\Lambda$, and
define the comultiplication $\Delta:\rl\Lambda\to \rl\Lambda\tens
\rl\Lambda$ by $\De(v_s)=v_s\otimes v_s$, for all $s\in \La$.

Before starting the proof, we make the following remark concerning stabilizers of finite subsets of $\Gamma$, under the left-right multiplication action of
$\Gamma\times\Gamma$. Denote by $\delta$ the diagonal embedding of $\Gamma$ into $\Gamma\times\Gamma$.

\begin{remark}\label{rem:stabilizer of finite set}

Let $\Gamma$ be a countable group as above and let $s_1,\ldots, s_k\in \Gamma$ be $k$ distinct elements, where $k\ge 2$. The stabilizer of $\{s_1,\ldots, s_k\}$ under the left-right multiplication action of $\Gamma\times\Gamma$ equals $(1,s_1^{-1})\de(H_0)(1,s_1)$, where $H_0<\Gamma$ is defined as the centralizer of the $k$ distinct elements $s_is_1^{-1}$, for $i=1, \ldots, k$. Therefore, if $\ff\sset\Gamma$ is a finite subset with $\abs{\ff}\ge k$, then $\stab(\ff)$ can be conjugated into $\delta(H_0)$, where $H_0$ is the centralizer of $k$ distinct elements in $\Gamma$. Moreover, these $k$ distinct elements necessarily generate an infinite subgroup of $\Gamma$.

Suppose first that $\Gamma=\Gamma_1\ast_\Sigma\Gamma_2$ is non-degenerate and that $\Sigma$ is malnormal in $\Gamma_1$. Let $g\in \Gamma$ be a non-trivial element. By \cite[Theorem 2]{Le67}, we have that the centralizer $Z_\Gamma(g)$ of $g$ in $\Gamma$ is either infinite cyclic or can be conjugate in $\Gamma_1$ or $\Gamma_2$. More precisely, if $g$ cannot be conjugate into $\Gamma_1$ or $\Gamma_2$, then $g$ has infinite order and $Z_\Gamma(g)$ is cyclic. If $g$ can be conjugate into one of the $\Gamma_i$, for $i=1$ or $2$, but not in $\Sigma$, then also $Z_\Gamma(g)$ gets conjugate into $\Gamma_i$. If $g$ can be conjugate into $\Sigma$, then the malnormality of $\Sigma$ in $\Gamma_1$ forces $Z_\Gamma(g)$ to be conjugate into $\Gamma_2$. Thus, if $\ff\sset \Gamma$ is a finite subset and $\abs{\ff}\ge 2$, then the stabilizer of $\ff$ under the left-right multiplication action is either cyclic (and hence amenable) or it is conjugate to a subgroup of $\de(\Gamma_i)$, for some $i=1$ or $2$.

A similar argument can be done also for HNN extensions, using \cite[Theorem 9]{KS70} and its corollaries. If $\Gamma=\rm{HNN}(\Gamma_0,\Sigma,\theta)$ is a non-degenerate HNN extension with $\{\Sigma,\te(\Sigma)\}$  malnormal in $\Gamma_0$ and if $\ff\sset\Gamma$ is a finite subset with $\abs{\ff}\ge 2$, then $\stab(\ff)$ is either infinite cyclic (and hence amenable) or conjugated to a subgroup of $\de(\Gamma_0)$.

Finally, let $\Gamma$ be as in assumption (3) and denote by $\kappa$ the bound on the order
of its finite subgroups. Let $c$ be an unbounded 1-cocycle into the left regular representation of $\Gamma$. If $\ff\sset\Gamma$ is a finite subset with $\abs{\ff}\ge \kappa$, then $\stab(\ff)$ can be conjugated into $\delta(H_0)$, where $H_0$ is the centralizer of $\kappa$ distinct elements in $\Gamma$. Since these $\kappa$ distinct elements necessarily generate an infinite subgroup $H<\Gamma$ that commutes with $H_0$, by \cite[Lemma 2.5.(1)]{Io11} it follows that either $H_0$ is amenable or the cocycle $c$ is bounded on $H$. If the cocycle $c$ is bounded on $H$, then since the left regular representation of $\Gamma$ is mixing, by \cite[Lemma 2.5.(2)]{Io11}, we get that $c$ is bounded on $H_0$. Thus, for any finite subset $\ff\sset\Gamma$ with $\abs{\ff}\ge \kappa$ we have that either $H_0$ is amenable or the cocycle $c$ is bounded on $H_0$.

\end{remark}

\begin{lemma}\label{A embeds in A tens A}
Under these assumptions, we have that $\De(A)\prec^f A\tens A$.
\end{lemma}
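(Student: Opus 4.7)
The plan is to establish $\De(A)\prec^f A\tens A$ in two stages: first obtain the weaker intertwining $\De(A)\prec^f M\tens A$, and then sharpen it to the Cartan $A\tens A$.

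\smallskip

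\noindent\emph{Stage 1.} Observe first that $\De(A)$ is abelian and hence amenable (in particular it is amenable relative to $M\tens A$), and that $\De(M)\sset \nn_{M\tens M}(\De(A))^{\bi}$, because $A$ is a Cartan subalgebra of $M=A\rtimes G$ with $G=\Gamma\times\Gamma$. In cases (1) and (2) I would promote the amalgamated free product (respectively, HNN) decomposition of $\Gamma$ to a compatible decomposition of the ambient crossed product, and apply Theorem \ref{norm of abelian salg:AFP-HNN} with $Q=\De(A)$ to deduce $\De(A)\prec^f M\tens A$; Remark \ref{rem:stabilizer of finite set} is the tool used to confirm that the amalgam subgroup remains (relatively) malnormal after incorporating the second $\Gamma$-factor. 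In case (3), the Künneth formula forces $\be_1^{(2)}(G)=0$, so Theorem \ref{norm of abelian salg:Betti} is not directly available for $G$. The plan is instead to combine Theorem \ref{thm: spectral gap} (applied to $M\tens M$ as a generalized Bernoulli crossed product) with the Gaussian deformation of Section 4 built from the unbounded $\Gamma$-cocycle $c$ pulled back to $G$ via the left projection. The stabilizer dichotomy in Remark \ref{rem:stabilizer of finite set} -- either $\stab(\ff)$ is amenable, or it is conjugate into $\de(H_0)$ with $c$ bounded on $H_0$ -- supplies the hypotheses of Theorem \ref{norm of salg amenable rel to inner part}. Lemma \ref{lem.cocycle-unif-on-delta-M} excludes the alternative in which the Gaussian deformation converges uniformly on a subalgebra containing $\De(M)$ (that would force $c$ to be bounded), and Lemma \ref{lem.rel-ameab} propagates the remaining relative amenability from $\De(A)$ to a sufficiently large subalgebra, ultimately yielding $\De(A)\prec^f M\tens A$.

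\smallskip

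\noindent\emph{Stage 2.} The tensor flip $\si:M\tens M\to M\tens M$ is a trace-preserving $\ast$-automorphism fixing $\De(A)$ pointwise (as $\De(a)=a\otimes a$) and interchanging $M\tens A$ with $A\tens M$, so Stage 1 applied in tandem with this symmetry also delivers $\De(A)\prec^f A\tens M$. To combine the two into $\De(A)\prec^f A\tens A$, I would invoke Lemma \ref{lem.intert-masa} with the Cartan $A\tens A\sset M\tens M$ and the group of unitaries $\De(\nn_M(A))\sset \nn_{M\tens M}(\De(A))$. The hypothesis $\De(A)'\cap(M\tens M)\prec A\tens A$ in that lemma would be extracted by intersecting the two embeddings from Stage 1 on the complementary tensor legs, while the required ergodicity of the $\De(\nn_M(A))$-action on $\rZ(\De(A)'\cap(M\tens M))$ is a consequence of Proposition \ref{prop:comultiplication}.(5), which forces $\De(M)'\cap(M\tens M)=\cz 1$.

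\smallskip

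\noindent\emph{Main obstacle.} The hardest step is Stage 1 in case (3): the vanishing $\be_1^{(2)}(\Gamma\times\Gamma)=0$ blocks a naive application of Theorem \ref{norm of abelian salg:Betti} to $G$, and one must instead run the spectral gap argument on the Bernoulli structure together with a one-sided Gaussian deformation, carefully separating the two alternatives from the stabilizer analysis of Remark \ref{rem:stabilizer of finite set}. Stage 2 is also delicate: intertwining into $M\tens A$ and into $A\tens M$ does not automatically combine into intertwining into $A\tens A$, and the Cartan structure of $A\sset M$, packaged through Lemma \ref{lem.intert-masa}, is essential for this refinement.
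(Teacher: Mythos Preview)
Your proposal misses the key simplification that makes the paper's proof short, and your Stage~2 contains a circularity.

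\textbf{Stage 1.} You assume you must work with the full acting group $G=\Gamma\times\Gamma$, and then worry (in case~(3)) that $\be_1^{(2)}(\Gamma\times\Gamma)=0$ blocks Theorem~\ref{norm of abelian salg:Betti}. The paper sidesteps this entirely by writing
\[
M=\bigl(A\rtimes(1\times\Gamma)\bigr)\rtimes(\Gamma\times 1)\quad\text{and}\quad M=\bigl(A\rtimes(\Gamma\times 1)\bigr)\rtimes(1\times\Gamma),
\]
so that in each decomposition the outer acting group is a single copy of $\Gamma$, to which Theorems~\ref{norm of abelian salg:Betti} and~\ref{norm of abelian salg:AFP-HNN} apply verbatim with $Q=\De(A)$. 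This yields $\De(A)\prec^f M\tens(A\rtimes(1\times\Gamma))$ and $\De(A)\prec^f M\tens(A\rtimes(\Gamma\times 1))$, and then \cite[Lemma~2.7]{BV12} combines these into $\De(A)\prec^f M\tens A$. No spectral gap argument, no Gaussian deformation, no stabilizer analysis is needed here; the machinery you describe (Theorem~\ref{thm: spectral gap}, Theorem~\ref{norm of salg amenable rel to inner part}, Lemma~\ref{lem.rel-ameab}) is reserved for the later Lemmas~\ref{lema:spectral gap for weak amen} and~\ref{rel com A embeds in A tens A}. Your plan for cases~(1) and~(2) is also off: $\Sigma\times\Gamma$ is \emph{not} malnormal in $\Gamma_1\times\Gamma$ (the intersection $(\Sigma\times\Gamma)\cap(g_1,e)(\Sigma\times\Gamma)(g_1^{-1},e)$ contains $\{1\}\times\Gamma$), so you cannot directly promote the AFP structure to $G$; again the one-factor-at-a-time trick is what works.

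\textbf{Stage 2.} The symmetry step giving $\De(A)\prec^f A\tens M$ is fine, but your plan to combine the two via Lemma~\ref{lem.intert-masa} requires the hypothesis $\De(A)'\cap(M\tens M)\prec A\tens A$. That statement is exactly Lemma~\ref{rel com A embeds in A tens A}, which is proved \emph{after} the present lemma and \emph{uses} it. Your suggestion to ``extract'' this hypothesis ``by intersecting the two embeddings from Stage~1'' conflates $\De(A)$ with its relative commutant; Stage~1 tells you about $\De(A)$, not about $\De(A)'\cap(M\tens M)$. The paper instead invokes \cite[Lemma~2.2(b)]{BV12}, a general combination lemma that takes $\De(A)\prec^f M\tens A$ and $\De(A)\prec^f A\tens M$ directly to $\De(A)\prec^f A\tens A$.
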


\begin{proof}
Write $M\cong(A\rtimes (1\times\Gamma))\rtimes(\Gamma\times 1)$ and
$M\cong(A\rtimes(\Gamma\times 1))\rtimes(1\times\Gamma)$. Applying
Theorem \ref{norm of abelian salg:AFP-HNN}, respectively Theorem
\ref{norm of abelian salg:Betti}, in both cases, for the subalgebra
$\De(A)\sset M\tens M$, we get that $$\De(A)\prec^f M\tens (A\rtimes
(1\times\Gamma))\quad\rm{ and }\quad \De(A)\prec^f M\tens (A\rtimes
(\Gamma\times 1)),$$ and hence, by \cite[Lemma 2.7]{BV12},
$\De(A)\prec^f M\tens A$.

By symmetry, it also follows that $\De(A)\prec^f A\tens M $, and thus, by \cite[Lemma 2.2.(b)]{BV12}, we have that $\De(A)\prec^f A\tens A$.
\end{proof}

We prove now the following \emph{spectral gap rigidity} lemmas, which
rely on Theorem \ref{thm: spectral gap} and that are similar to
\cite[Lemma 8.8]{BV12}.

\begin{lemma}\label{lema:spectral gap for AFP and HNN}
Suppose that $\Gamma$ is an amalgamated free product
$\Gamma=\Gamma_1\ast_\Sigma\Gamma_2$ or an HNN extension
$\Gamma=$HNN$(\Gamma_0, \Sigma,\theta)$, as in assumption (1), respectively
(2). Let $Q\sset M\tens M$ be a von Neumann subalgebra and denote by
$P$ the von Neumann algebra generated by its normalizer in $M\tens
M$. Assume that $Q$ is strongly non-amenable relative to $M\tens A$
and that $\Delta(\rl G)\sset P$. Let $(\alpha_t)_{t\in\rz}$ be the
tensor length deformation on $M$ defined in Section 3. Then either
$$ \id\otimes\alpha_t\to \id \rm{ uniformly on } \uu(Q^\pr\cap M\tens M)$$
or there exists a non-zero projection $q\in P^\pr\cap M\tens M$ such that
$$ Pq \rm{ is amenable relative to }(M\tens A)\rtimes(\Gamma\times\Sigma)\rm{ or to }(M\tens A)\rtimes(\Sigma\times\Gamma).$$
\end{lemma}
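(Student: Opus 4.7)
The plan is to apply Theorem \ref{thm: spectral gap} with $N := M$, $I := \Gamma$, the group $G = \Gamma \times \Gamma$ acting on $\Gamma$ by left-right multiplication, and $A_0 := \rl H$; the resulting deformation is exactly $(\alpha_t)_{t\in\rz}$ of the statement. Assume the first alternative fails, i.e., $\id \otimes \alpha_t$ does not converge uniformly to $\id$ on $\uu(Q^\pr \cap M \tens M)$. Then the hypothesis of Theorem \ref{thm: spectral gap} must fail, and taking $\kappa = 2$ I obtain a finite subset $\ff \sset \Gamma$ with $\abs{\ff} \geq 2$ together with a non-zero projection $q_0 \in Q^\pr \cap M \tens M$ such that $Qq_0$ is amenable relative to $M \tens (A \rtimes \stab(\ff))$.

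By Remark \ref{rem:stabilizer of finite set}, the stabilizer $\stab(\ff)$ is either cyclic, hence amenable, or conjugate in $G$ to a subgroup of $\de(\Gamma_i)$ for some $i \in \{1, 2\}$ in the AFP case, or of $\de(\Gamma_0)$ in the HNN case. The amenable alternative is ruled out by the strong non-amenability of $Q$ relative to $M \tens A$: if $\stab(\ff)$ were amenable, then $M \tens (A \rtimes \stab(\ff))$ would be amenable relative to $M \tens A$, so $Qq_0$ would be amenable relative to $M \tens A$, contradicting our assumption. After conjugating by a unitary of the form $1 \otimes u_g$ for an appropriate $g \in G$, I may therefore assume $Qq_0$ is amenable relative to $M \tens (A \rtimes \de(\Gamma_i))$ for some $i$.

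Next I would upgrade this amenability statement from $Q$ to the normalizer $P$, exploiting the hypothesis $\De(\rl G) \sset P$. The natural tool is a normalizer principle in the spirit of Lemma \ref{lem.rel-ameab}: conjugation by $\De(u_g) = u_g \otimes u_g$, with $g$ ranging over $G$, sends $M \tens (A \rtimes \de(\Gamma_i))$ to $M \tens (A \rtimes g\de(\Gamma_i) g^{-1})$ while leaving $Q$ globally invariant and moving $q_0$ within $Q^\pr \cap M \tens M$. Combining these conjugates through a suitable generator/intersection argument should yield a non-zero projection $q \in P^\pr \cap M \tens M$ such that $Pq$ is amenable relative to the von Neumann subalgebra generated by all these conjugate subalgebras together with $\De(\rl G)$.

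The main obstacle I foresee is the precise identification of this generated algebra as $(M \tens A) \rtimes (\Gamma \times \Sigma)$ or $(M \tens A) \rtimes (\Sigma \times \Gamma)$. This identification should hinge on the malnormality of $\Sigma$ in $\Gamma_1$ (respectively of $\{\Sigma, \te(\Sigma)\}$ in $\Gamma_0$), which controls how conjugates of $\de(\Gamma_i)$ intersect and how they fit inside $G = \Gamma \times \Gamma$, and thus forces the resulting subgroup of $G$ to align with one of the two left-right wreath-product substructures. The dichotomy between $\Gamma \times \Sigma$ and $\Sigma \times \Gamma$ then reflects the inherent left--right symmetry of the action of $G$ on $\Gamma$, corresponding to which of the two factors carries the non-rigid quotient.
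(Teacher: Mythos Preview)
Your opening reduction is correct and matches the paper: via Theorem~\ref{thm: spectral gap} and Remark~\ref{rem:stabilizer of finite set} you arrive at a non-zero projection $q_0\in Q'\cap M\tens M$ with $Qq_0$ amenable relative to $M\tens(A\rtimes\de(\Gamma_i))$ (or $\de(\Gamma_0)$ in the HNN case). From this point on, however, your argument has a genuine gap.

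The step you label ``upgrade from $Q$ to $P$'' is not a valid move. There is no general normalizer principle saying that if $Qq_0$ is amenable relative to a subalgebra $B$, then some corner of the normalizer $P$ is amenable relative to (anything built out of conjugates of) $B$. Lemma~\ref{lem.rel-ameab} does not do this: it requires the normalizing group to normalize the Cartan-type subalgebra, not the amenability target, and its output is a statement about the algebra generated by the group and $A$, not about the normalizer of $Q$. Your description (``combining these conjugates through a suitable generator/intersection argument'') is wishful; the subalgebras $M\tens(A\rtimes g\de(\Gamma_i)g^{-1})$ are mutually in bad position inside $M\tens M$, and nothing forces their ``combination'' to be $(M\tens A)\rtimes(\Gamma\times\Sigma)$.

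The paper's route is entirely different at this juncture and uses a deep structural theorem, namely \cite[Theorem~A]{Va13} (and \cite[Theorem~4.1]{Va13} in the HNN case). One assumes the second alternative of the lemma fails, i.e.\ $P$ is strongly non-amenable relative to both $(M\tens A)\rtimes(\Gamma\times\Sigma)$ and $(M\tens A)\rtimes(\Sigma\times\Gamma)$, and then proves the first. Writing $M\tens M$ itself as an amalgamated free product $(\arn\rtimes(\Gamma\times\Gamma_1))\ast_{\arn\rtimes(\Gamma\times\Sigma)}(\arn\rtimes(\Gamma\times\Gamma_2))$ with $\arn=M\tens A$, and feeding in the relatively amenable $Qq$ (after adjusting $q$ into $Z(P)$ via \cite[Lemma~2.6]{BV12}), that theorem delivers a trichotomy: either $Qq\prec\arn\rtimes(\Gamma\times\Sigma)$, or $Pq\prec\arn\rtimes(\Gamma\times\Gamma_j)$, or $Pq$ is amenable relative to $\arn\rtimes(\Gamma\times\Sigma)$. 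The last is excluded by assumption; the second is killed using $\De(\rl G)\sset P$, Lemma~\ref{A embeds in A tens A} and Proposition~\ref{prop:comultiplication}.(1). The first alternative is then pushed, via relative malnormality of $\Sigma$ and \cite[Lemma~6.3]{Va10b}, down to $Qq\prec\arn\rtimes(\Gamma\times 1)$; a symmetric run gives $Qq\prec\arn\rtimes(1\times\Gamma)$, hence $Qq\prec\arn=M\tens A$, contradicting the strong non-amenability of $Q$ relative to $M\tens A$. This is what proves the needed strong non-amenability of $Q$ relative to $M\tens(A\rtimes\de(\Gamma_i))$, so that Theorem~\ref{thm: spectral gap} applies directly. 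The key input you are missing is precisely this AFP/HNN structure theorem, which is what transfers information from $Q$ to its normalizer.
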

\begin{proof}

We assume that $P$ is strongly non-amenable relative to $(M\tens
A)\rtimes(\Gamma\times\Sigma)$ and to $(M\tens
A)\rtimes(\Sigma\times\Gamma)$ and we prove that
$\id\otimes\alpha_t$ converges to $\id$ uniformly on $\uu(Q^\pr\cap
M\tens M)$.

Suppose first that $\Gamma=\Gamma_1\ast_\Sigma\Gamma_2$ is
non-degenerate and $\Sigma$ malnormal in $\Gamma_1$. By Remark
\ref{rem:stabilizer of finite set}, if $\ff\sset\Gamma$ is a finite
subset and $\abs{\ff}\ge 2$, then $\stab(\ff)$ is either amenable or
it is conjugated to a subgroup of $\delta(\Gamma_i)$, for some $i=1$
or $2$.

Then the lemma follows from Theorem \ref{thm: spectral gap} once we
have proven that $Q$ is strongly non-amenable relative to $M\tens
(A\rtimes\delta(\Gamma_i))$, for $i=1,2$.

Assume, by contradiction, that there exists a non-zero projection
$q\in Q^\pr\cap M\tens M$ such that $Qq$ is amenable relative to
$M\tens(A\rtimes\delta(\Gamma_i))$. Denote $\arn:=M\tens A$. By
assumption, $\Delta(\rl G)\sset P$ and moreover, by \cite[Lemma
2.6]{BV12}, we may assume that $q\in Z(P)$. Writing $M\tens M$ as an
amalgamated free product $M\tens
M=(\arn\rtimes(\Gamma\times\Gamma_1))\ast_{\arn\rtimes(\Gamma\times\Sigma)}(\arn\rtimes(\Gamma\times\Gamma_2))$
and applying \cite[Theorem A]{Va13}, at least one of the following
assertions is true:
\begin{itemize}
\item $Qq\prec\arn\rtimes (\Gamma\times\Sigma)$;
\item $Pq\prec\arn \rtimes (\Gamma\times\Gamma_i)$, for some $i=1$ or
$2$;
\item $Pq$ is amenable relative to $\arn\rtimes (\Gamma\times\Sigma)$.
\end{itemize}

If $Pq\prec\arn \rtimes (\Gamma\times\Gamma_i)$, for some $i=1$ or
$2$, then by Lemma \ref{A embeds in A tens A} and \cite[Lemma
2.3]{BV12} it follows that $M\prec A \rtimes
(\Gamma\times\Gamma_i)$, which is impossible since $\Gamma_i$ has
infinite index in $\Gamma$, for both $i=1$ and $2$. Notice that, by
assumption, the last alternative cannot hold.

If $Qq \prec\arn\rtimes (\Gamma\times\Sigma)$, then we have that
$Qq\prec \arn\rtimes(\Gamma\times 1)$. To prove this, assume that
$Qq\nprec \arn\rtimes(\Gamma\times 1)$. Since $\Sigma<\Gamma$ is
relatively malnormal, there exists an infinite index subgroup
$\Lambda<\Gamma$ such that $\abs{\Sigma\cap g\Sigma g^{-1}}<
\infty$, for all $g\in\Gamma\setminus\Lambda$ (e.g. $\Lambda=\Gamma_2$). Then, by \cite[Lemma
6.3]{Va10b}, it follows that $\Delta(\rl G)\prec
\arn\rtimes(\Gamma\times\Lambda)$ and hence, by Lemma \ref{A embeds
in A tens A} and \cite[Lemma 2.3]{BV12}, we get that $M\prec
A\rtimes(\Gamma\times\Lambda)$, which is impossible since $\Lambda$
has infinite index in $\Gamma$.

By symmetry, writing $M\tens M=(\arn\rtimes(\Gamma_1\times\Gamma))\ast_{\arn\rtimes(\Sigma\times\Gamma)}(\arn\rtimes(\Gamma_2\times\Gamma))$
and using the same arguments as above, it follows that also $Qq\prec \arn\rtimes(1\times\Gamma)$ and hence, by \cite[Lemma 2.2.(b)]{BV12}, $Qq\prec \arn$. Now this implies that there exists a non-zero projection $q^\pr \in Q^\pr\cap M\tens M$ such that $Q q^\pr$ is amenable relative to
$M\tens A$, which contradicts our initial assumption.

Suppose now that $\Gamma=\rm{HNN}(\Gamma_0,\Sigma,\theta)$ is non-degenerate and $\{\Sigma,\te(\Sigma)\}$ is malnormal in $\Gamma_0$. By Remark \ref{rem:stabilizer of finite set}, if $\ff\sset\Gamma$ is a finite subset and $\abs{\ff}\ge 2$, then $\stab(\ff)$ is either amenable or is conjugated to a subgroup of $\delta(\Gamma_0)$. Then the conclusion follows in the same manner as for amalgamated free products, using \cite[Theorem 4.1]{Va13} instead of \cite[Theorem A]{Va13}.
\end{proof}

\begin{lemma}\label{lema:spectral gap for weak amen}
Suppose that $\Gamma$ is weakly amenable and has positive first $\ell^2$-Betti number, as in assumption (3). Let
$Q\sset M\tens M$ be a von Neumann subalgebra and denote by $P$ the
von Neumann algebra generated by its normalizer in $M\tens M$.
Assume that $Q$ is strongly non-amenable relative to $M\tens A$ and
that $\Delta(\rl G)\sset P$. Let $(\alpha_t)_{t\in\rz}$ be the
tensor length deformation on $M$ defined in Section 3. Then
$$ \id\otimes\alpha_t\to \id \rm{ uniformly on } \uu(Q^\pr\cap M\tens M).$$
\end{lemma}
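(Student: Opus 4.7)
The plan is to verify the hypothesis of Theorem \ref{thm: spectral gap}: for every finite $\ff \sset \Gamma$ with $|\ff| \ge \kappa$ (where $\kappa$ is the bound on the orders of finite subgroups of $\Gamma$), that $Q$ is strongly non-amenable relative to $M \tens (A \rtimes \stab(\ff))$. The conclusion of the lemma will then be immediate. By the last paragraph of Remark \ref{rem:stabilizer of finite set}, $\stab(\ff)$ is conjugate in $G$ to $\delta(H_0)$, where $H_0 < \Gamma$ is the centralizer of $\kappa$ distinct elements, and there are two possibilities: either \textbf{(a)} $H_0$ is amenable, or \textbf{(b)} the unbounded 1-cocycle $c$ into the left regular representation of $\Gamma$ is bounded on $H_0$.

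Case \textbf{(a)} follows from the hypothesis by transitivity of relative amenability: $A \rtimes \delta(H_0)$ is amenable relative to $A$, hence $M \tens (A \rtimes \delta(H_0))$ is amenable relative to $M \tens A$, so strong non-amenability of $Q$ relative to $M \tens A$ propagates to strong non-amenability relative to $M \tens (A \rtimes \delta(H_0))$.

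In Case \textbf{(b)}, I argue by contradiction: suppose $Qq$ is amenable relative to $M \tens (A \rtimes \delta(H_0))$ for some nonzero $q \in Q^\pr \cap M \tens M$. Denote by $\Gamma_L, \Gamma_R$ the left and right copies of $\Gamma$ in $G$, set $\widetilde A := A \rtimes \Gamma_L$ so that $M = \widetilde A \rtimes \Gamma_R$, and observe that since $\delta(H_0) \sset \Gamma_L \times H_0$, $Qq$ is in fact amenable relative to $M \tens (\widetilde A \rtimes H_0)$. Viewing $M \tens M = (M \tens \widetilde A) \rtimes \Gamma_R$ and running the Gaussian deformation $\id \otimes \beta_t$ attached to the cocycle $c$ on $\Gamma_R$ (bounded on $H_0$ by assumption), a tensorized version of Theorem \ref{norm of salg amenable rel to inner part} yields either \textbf{(i)} a nonzero projection $q^\pr \in Q^\pr \cap M \tens M$ with $Qq^\pr$ amenable relative to $M \tens \widetilde A$, or \textbf{(ii)} a nonzero $r \in Z(P)$ with $\id \otimes \beta_t \to \id$ uniformly on $Pr$. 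In subcase \textbf{(ii)}, $\Delta(\rl G) \sset P$ restricts the uniform convergence to $\Delta(\rl G) r$; combining this with Lemma \ref{A embeds in A tens A} ($\Delta(A) \prec^f A \tens A$) and the $(\id \otimes \beta_t)$-invariance of $M \tens A$, I upgrade the statement to uniform convergence on $\Delta(M) r^\pr$ for some nonzero projection $r^\pr$, and Lemma \ref{lem.cocycle-unif-on-delta-M} then forces $c$ to be bounded, a contradiction. In subcase \textbf{(i)}, I invoke the Popa--Vaes dichotomy \cite[Theorem 3.1]{PV11} (the result underpinning Theorem \ref{norm of abelian salg:Betti}) for the weakly amenable group $\Gamma_R \cong \Gamma$ with positive first $\ell^2$-Betti number, applied to $M \tens M = (M \tens \widetilde A) \rtimes \Gamma_R$: if $P$ is amenable relative to $M \tens \widetilde A$, then $\Delta(\rl G) \sset P$ together with Lemma \ref{A embeds in A tens A} and Proposition \ref{prop:comultiplication}.(2) forces $\Gamma_R = \Gamma$ to be amenable, contradicting our standing assumption; the remaining alternative of the dichotomy is handled by a symmetric application in the $\Gamma_L$-direction, eventually producing some $Qq^{\pr\pr\pr}$ amenable relative to $M \tens A$ and contradicting strong non-amenability.

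The main obstacle is subcase \textbf{(i)} of Case \textbf{(b)}: closing the argument demands a symmetric interplay between the Gaussian deformations and the Popa--Vaes dichotomies in both the $\Gamma_L$- and $\Gamma_R$-directions, and a careful combination of $\Delta(\rl G) \sset P$ with the near-containment of $\Delta(A)$ in $A \tens A$ provided by Lemma \ref{A embeds in A tens A}, in order to transfer the relevant amenability or intertwining statements between $\Delta(\rl G)$ and $\Delta(M)$.
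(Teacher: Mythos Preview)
Your overall strategy --- verify the hypothesis of Theorem \ref{thm: spectral gap} via Remark \ref{rem:stabilizer of finite set} and split into the amenable-$H_0$ case and the bounded-cocycle case --- matches the paper exactly, and Case (a) is handled identically. Subcase (ii) of Case (b) also tracks the paper's argument.

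The divergence, and the gap, is in subcase (i). The paper applies Theorem \ref{norm of salg amenable rel to inner part} directly to the decomposition $M \tens M = (M \tens A) \rtimes G$ with the subgroup $\stab(\ff)$, so that the first alternative already reads ``$Qq'$ is amenable relative to $M \tens A$'' and contradicts the standing hypothesis outright; no further subcase has to be discussed. You instead enlarge $A \rtimes \delta(H_0)$ to $\widetilde{A} \rtimes H_0$ and apply Theorem \ref{norm of salg amenable rel to inner part} in the one-sided decomposition $M \tens M = (M \tens \widetilde{A}) \rtimes \Gamma_R$, which only yields ``$Qq'$ amenable relative to $M \tens (A \rtimes \Gamma_L)$''.

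Your sketch for closing this does not go through. You want to conclude from ``$P$ amenable relative to $M \tens \widetilde{A}$'', via $\Delta(\rl G) \subset P$, Lemma \ref{A embeds in A tens A}, and Proposition \ref{prop:comultiplication}.(2), that $\Gamma$ is amenable. But Proposition \ref{prop:comultiplication}.(2) requires $\Delta(M)$ --- not merely $\Delta(\rl G)$ --- to be amenable relative to $M \tens \widetilde{A}$, and you cannot upgrade: relative amenability is not preserved under von Neumann joins, so knowing separately that $\Delta(\rl G)$ and $\Delta(A)$ are each amenable relative to $M \tens \widetilde{A}$ says nothing about $\Delta(M) = (\Delta(A) \cup \Delta(\rl G))''$. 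The ``symmetric application in the $\Gamma_L$-direction'' fares no better: it would at best produce a possibly \emph{different} corner $Qq''$ amenable relative to $M \tens (A \rtimes \Gamma_R)$, and there is no general principle that intersects two such relative-amenability statements (over different projections, over the two subalgebras $M \tens (A \rtimes \Gamma_L)$ and $M \tens (A \rtimes \Gamma_R)$) down to $M \tens A$. As it stands, subcase (i) is the missing piece of your argument.
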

\begin{proof}
As we have remarked before, by Theorem \ref{thm: spectral gap}, it
suffices to prove the existence of an integer $\kappa>0$ such that
for any finite subset $\ff\sset\Gamma$, with $\abs{\ff}\ge \kappa$,
we have that
$$Q\rm{ is strongly non-amenable relative to }M\tens(A\rtimes\stab{\ff}).$$

To prove this claim, assume by contradiction, that for every integer
$\kappa>0$, there exists a finite subset $\ff\sset\Gamma$, with
$\abs{\ff}\ge \kappa$, and there exists a non-zero projection $q\in
Q^\pr\cap M\tens M$ such that \begin{equation}\label{eq 1:spec gap
lemma}Qq \rm{ is amenable relative to }M\tens
(A\rtimes\stab{\ff}).\end{equation}

Since $\Gamma$ has positive first $\ell^2$-Betti number, it is non-amenable and admits an unbounded
1-cocycle $c$ into the left regular representation. Fix $\kappa$ to be the bound on the order of finite subgroups of
$\Gamma$. By assumption, for this particular $\kappa$, there is a
finite set $\ff\sset\Gamma$, with  $\abs{\ff}\ge \kappa$, satisfying
\eqref{eq 1:spec gap lemma}. By Remark \ref{rem:stabilizer of finite
set}, we have that either $\stab{\ff}$ is amenable or the cocycle
$c$ is bounded on $\stab(\ff)$.

If $\stab{\ff}$ is amenable, then \eqref{eq 1:spec gap lemma}
implies that $Qq$ is amenable relative to $M\tens A$, which
contradicts our initial assumption.

Let $(\beta_t)_{t\in\rz}$ be the Gaussian deformation on $M$ defined
in Section 4. If the cocycle $c$ is bounded on $\stab{\ff}$, then,
by Theorem \ref{norm of salg amenable rel to inner part}, one of the
following statements must be true:

\begin{itemize}
\item There exists a non-zero projection $q^\pr\in Q^\pr\cap M\tens M$ such that $Qq^\pr$ is amenable relative to $M\tens
A$;
\item There exists a non-zero projection $r\in Z(Pq)$ such that $\id\otimes\beta_t\to\id$ uniformly on the unit ball of $Pr$.
\end{itemize}
The first alternative clearly contradicts the initial assumption. If
$\id\otimes\beta_t\to\id$ uniformly on the unit ball of $Pr$, then
since $\Delta(\rl G)q\sset\nn_{M\tens M}(Qq)^{\bi}$, it follows that
$\id\otimes\beta_t\to\id$ uniformly on the unit ball of $\Delta(\rl
G)q$. By Lemma \ref{A embeds in A tens A} we get that, in
particular, $\id\otimes\beta_t\to\id$ uniformly on the unit ball of
$\Delta(A)$. Thus $\id\otimes\beta_t\to\id$ uniformly on the set
$\{q\De(au_g)q\mid a\in\uu(A),g\in G\}$. Since $\{a u_g\mid
a\in\uu(A),g\in G\}$ generate $M$, by \cite[Lemma 3.4]{Va10b}, there
exists a non-zero projection $q_1\in\De(M)^\pr\cap M\tens M$ such
that $\id\otimes\beta_t\to\id$ uniformly on the unit ball of
$\Delta(M)q_1$, but this contradicts Lemma
\ref{lem.cocycle-unif-on-delta-M}.
\end{proof}

The next lemma is an immediate consequence of \cite[Lemma 2.4]{Io06}

\begin{lemma}\label{ioana's lemma}

Let $p\in M\tens A$ be a non-zero projection and $N\subset p(M\tens A)p$ be a von Neumann subalgebra. If there are $\de>0$ and $t>0$ such that
$\tau(w^\ast(\id\otimes \al_t)(w))\ge \de$, for all $w\in\uu(N)$, then there exists a finite subset $\ff\subset\Gamma$ such that $$N\prec M\tens A_0^{\ff}.$$
\end{lemma}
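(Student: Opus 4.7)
The plan is to write down a Fourier-type expansion of $w \in \uu(N)$ along the position decomposition of $A = A_0^{(\Gamma)}$, use freeness in the free product $A_0\ast \rl\zz$ to diagonalize $(\id\otimes E_A)\circ(\id\otimes\al_t)$, and then reduce the resulting statement to \cite[Lemma 2.4]{Io06}.

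First, for every finite subset $\ff\sset\Gamma$ let $A_\ff^0:=\bigotimes_{i\in\ff}(A_0\ominus\cz 1)_i \sset A$, with the convention $A_\emptyset^0=\cz 1$. The family $\{L^2(A_\ff^0)\}_\ff$ is an orthogonal Hilbert-space decomposition of $L^2(A)$, so every $w\in M\tens A$ admits a unique orthogonal decomposition $w=\sum_\ff w_\ff$ with $w_\ff\in L^2(M)\tens L^2(A_\ff^0)$, and $\|w\|_2^2=\sum_\ff\|w_\ff\|_2^2$.

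The key computation is the following. Writing $u_t=\alpha\cdot 1+v$ with $\alpha:=\tau(u_t)$ and $v$ of trace zero, and using the alternating-word formula for the unique trace-preserving conditional expectation $E:A_0\ast\rl\zz\to A_0$, a direct check shows that $E(u_t b u_t^*)=|\alpha|^2 b=\rho(t)\,b$ for every $b\in A_0$ with $\tau(b)=0$, where $\rho(t):=|\tau(u_t)|^2\in [0,1)$. Applying this position-by-position across the distinct positions entering an element of $A_\ff^0$ gives $E_A\circ \al_t(x)=\rho(t)^{|\ff|}x$ for all $x\in A_\ff^0$. Consequently,
$$\tau\bigl(w^*(\id\otimes\al_t)(w)\bigr)=\tau\bigl(w^*(\id\otimes E_A)(\id\otimes\al_t)(w)\bigr)=\sum_\ff\rho(t)^{|\ff|}\|w_\ff\|_2^2.$$

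Combining this identity with the hypothesis $\tau(w^*(\id\otimes\al_t)(w))\ge\de$ and the bound $\|w\|_2^2\le\tau(p)\le 1$, one picks $k$ large enough so that $\rho(t)^k\le\de/2$; then
$$\sum_{|\ff|\le k}\|w_\ff\|_2^2\ \ge\ \de-\rho(t)^k\sum_{|\ff|>k}\|w_\ff\|_2^2\ \ge\ \de/2\quad\text{for every }w\in\uu(N).$$
This uniform concentration of the $L^2$-mass of $w$ on supports of size at most $k$ is precisely the hypothesis used in \cite[Lemma 2.4]{Io06} to produce a single finite subset $\ff\sset\Gamma$ with $N\prec M\tens A_0^{\ff}$. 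Ioana's lemma is stated for subalgebras of $pAp$, but his argument goes through verbatim with $A$ replaced by $M\tens A$ and $E_A$ by $\id\otimes E_A$, since the ambient tensor factor $M$ commutes both with the deformation $\id\otimes\al_t$ and with every conditional expectation used along the way.

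The main obstacle is exactly this last step: converting the uniform $L^2$-bound over the infinite family $\{\ff:|\ff|\le k\}$ into intertwining into a single $M\tens A_0^{\ff}$ requires an averaging/compactness argument of the kind carried out by Ioana. Everything else in the proof is the bookkeeping described above together with the moment computation in $A_0\ast\rl\zz$.
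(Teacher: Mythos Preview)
Your proposal is correct and follows exactly the route the paper intends: the paper gives no proof beyond the sentence ``The next lemma is an immediate consequence of \cite[Lemma 2.4]{Io06}'', and your argument is precisely the standard unpacking of that citation---the orthogonal decomposition $L^2(A)=\bigoplus_\ff L^2(A_\ff^0)$, the free-product computation $E_A\circ\al_t|_{A_\ff^0}=\rho(t)^{|\ff|}\,\id$ with $\rho(t)=|\tau(u_t)|^2\in[0,1)$, and the reduction to Ioana's compactness step, with the passive tensor factor $M$ carried along harmlessly. There is nothing to add.
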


\begin{lemma}\label{rel com A embeds in A tens A}

We have that $\De(A)^\pr\cap M\tens M\prec^f A\tens A$.
\end{lemma}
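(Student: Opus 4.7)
Set $Q := \De(A)^\pr\cap M\tens M$. The strategy is to apply Theorem \ref{norm of abelian salg:Betti} (under assumption (3) of Theorem \ref{thm.main-intro}) or Theorem \ref{norm of abelian salg:AFP-HNN} (under assumptions (1) and (2)) to $Q$, then combine the conclusion with the symmetric statement obtained by swapping the two tensor factors, to deduce $Q\prec^f A\tens A$. More precisely, once these theorems give $Q\prec^f M\tens A$ and, symmetrically, $Q\prec^f A\tens M$, an application of \cite[Lemma 2.2.(b)]{BV12} together with the equality $(M\tens A)\cap(A\tens M)=A\tens A$ finishes the proof.

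To invoke the normalizer theorems, two hypotheses must be checked: (i) $\De(M)\subset\mathcal{N}_{M\tens M}(Q)^{\pr\pr}$; and (ii) $Q$ is amenable relative to $M\tens A$ inside $M\tens M$. Hypothesis (i) is immediate. Since $\De(A)$ is abelian, $\De(A)\subset Q$. Since $A$ is a Cartan subalgebra of $M$, $\rl G$ normalizes $A$ in $M$, so $\De(\rl G)$ normalizes $\De(A)$ and therefore also its relative commutant $Q$. Putting these together, $\De(M)=(\De(A)\cup\De(\rl G))^{\pr\pr}\subset\mathcal{N}_{M\tens M}(Q)^{\pr\pr}$.

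Hypothesis (ii) is the technical core. The input is Lemma \ref{A embeds in A tens A}, giving $\De(A)\prec^f A\tens A$. By Theorem \ref{thm:intertwining}, this yields an integer $n\ge 1$, a non-zero projection $q_0\in\M_n(\cz)\otimes(A\tens A)$, a partial isometry $v\in\M_{1,n}(\cz)\otimes(M\tens M)$ with $vv^*=1$ and $v^*v=q_0$, and a normal $\ast$-homomorphism $\te:\De(A)\to q_0(\M_n(\cz)\otimes(A\tens A))q_0$ such that $xv=v\te(x)$ for every $x\in\De(A)$. Conjugation by $v$ places $v^*Qv$ inside the relative commutant of $\te(\De(A))q_0$ in $q_0(\M_n(\cz)\otimes(M\tens M))q_0$. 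Writing $M\tens M\cong (A\tens A)\rtimes(G\times G)$ via the Cartan inclusion $A\tens A\subset M\tens M$, one can analyse this relative commutant as a crossed product $(A\tens A)\rtimes G_0$, where $G_0\le G\times G$ is the pointwise stabilizer of $\te(\De(A))q_0$ under the generalized Bernoulli action. Freeness of this action, together with the fact that $\te(\De(A))q_0$ is an amplified copy of the full diffuse Bernoulli algebra $A$, constrains $G_0$ to sit, up to amenable pieces, inside $G\times 1$, from which amenability of the relative commutant relative to $\M_n(\cz)\otimes(M\tens A)$ follows. Transporting back via $v$ gives amenability of $Q$ relative to $M\tens A$.

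The main obstacle is the amenability claim in (ii). The partial isometry $v$ only places $\De(A)$ inside the Cartan $A\tens A$, not the whole commutant $Q$. One has to control precisely how far the relative commutant of $\te(\De(A))q_0$ extends beyond $\M_n(\cz)\otimes(A\tens A)$ in the ``$1\times G$-direction'', using not merely the abstract intertwining but the fullness of the Bernoulli embedding $\te$ and the freeness of the $(G\times G)$-action on $A\tens A$ to keep the excess amenable. This is exactly the point at which the wreath-product structure and the Cartan property of $A\subset M$ are used in an essential way.
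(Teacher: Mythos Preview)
Your overall strategy is correct and matches the paper: reduce to showing that $Q=\De(A)'\cap M\tens M$ is amenable relative to $M\tens A$, then apply Theorem~\ref{norm of abelian salg:Betti} or~\ref{norm of abelian salg:AFP-HNN} and symmetrize via \cite[Lemma 2.2.(b)]{BV12}. Hypothesis (i) is fine.

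The gap is in your argument for (ii). The intertwining from Lemma~\ref{A embeds in A tens A} gives you an \emph{abstract} embedding $\te:\De(A)\to q_0(\M_n(\cz)\otimes A\tens A)q_0$; it tells you nothing about how $\te(\De(A))$ sits inside $A\tens A$ as a Bernoulli algebra. Your claim that ``freeness \dots together with the fact that $\te(\De(A))q_0$ is an amplified copy of the full diffuse Bernoulli algebra $A$'' forces the pointwise stabilizer $G_0$ to lie in $G\times 1$ up to amenable pieces is unjustified: there is no reason whatsoever why elements of $1\times G$ (or of the diagonal) could not fix $\te(\De(A))q_0$. The image $\te(\De(A))$ is just some diffuse abelian subalgebra of a corner of $\M_n(\cz)\otimes A\tens A$; such subalgebras can easily have large stabilizers in both factors. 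If your argument worked, it would bypass the entire deformation/rigidity input, which is implausible.

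The paper proves (ii) by contradiction. Assuming $Q$ is strongly non-amenable relative to $M\tens A$, one feeds this into the spectral gap lemmas (Lemma~\ref{lema:spectral gap for AFP and HNN} or~\ref{lema:spectral gap for weak amen}) to force $\id\otimes\al_t\to\id$ uniformly on $\uu(\De(A))$. Combining this uniform convergence with the intertwining $\De(A)\prec M\tens A$ and Ioana's transversality argument (Lemma~\ref{ioana's lemma}), one upgrades to $\De(A)\prec M\tens A_0^{\ff}$ for some \emph{finite} $\ff\subset\Gamma$. Since $\De(A)\nprec M\tens 1$, \cite[Lemma 1.5]{Io06} then yields $\De(M)\prec M\tens(A\rtimes\stab\ff)$, contradicting Proposition~\ref{prop:comultiplication}.(1) because $\stab\ff$ has infinite index. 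So the tensor length deformation is essential here; your attempted shortcut through the structure of the stabilizer does not go through.
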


\begin{proof}
Denote $Q:=\De(A)^\pr\cap M\tens M$ and $P:=\nn_{M\tens M}(Q)^{\bi}$. It suffices to prove that there
exists a non-zero projection $p\in Q^\pr\cap M\tens M$ such that
\begin{equation}\label{corner of Q is am rel to MtensA}Qp \rm{ is amenable relative to }M\tens A.\end{equation}

Indeed, suppose that there exists a non-zero projection $p\in Q^\pr\cap
M\tens M$ such that $Qp$ is amenable relative to $M\tens A$. Since
$\De(M)\sset P$ and since $\De(M)^\pr\cap
M\tens M=\cz\cdot 1$, it follows that $Q$ is amenable relative to
$M\tens A$. Applying Theorem \ref{norm of abelian salg:Betti}, respectively Theorem \ref{norm of abelian salg:AFP-HNN} for $Q\sset M\tens M = (M\tens (A\rtimes(1\times\Gamma)))\rtimes(\Gamma\times 1)$ and $Q\sset
M\tens M = (M\tens (A\rtimes(\Gamma\times
1)))\rtimes(1\times\Gamma)$, we get that $Q\prec^f M\tens A$, and by
symmetry, $Q\prec^f A\tens A$.

Thus, the only thing we need to prove is \eqref{corner of Q is am
rel to MtensA}. Assume not, i.e.
$$Q\rm{ is strongly non-amenable relative to }M\tens A.$$

\bf{Claim}: $ \id\otimes\alpha_t \to \id \rm{ uniformly on }\uu(\Delta(A))$.

Suppose first that $\Gamma$ is an amalgamated free product $\Gamma=\Gamma_1\ast_\Sigma\Gamma_2$ or an HNN extension
$\Gamma=$HNN$(\Gamma_0, \Sigma,\theta)$, as in assumption (1), respectively (2).
Since $\Delta(\rl G)\sset P$, Lemma
\ref{lema:spectral gap for AFP and HNN} implies that either
$$ \id\otimes\alpha_t\to \id \rm{ uniformly on } \uu(Q^\pr\cap M\tens M)$$
or there exists a non-zero projection $q\in P^\pr\cap M\tens M$ such that
$$ Pq \rm{ is amenable relative to }(M\tens A)\rtimes(\Gamma\times\Sigma)\rm{ or to }(M\tens A)\rtimes(\Sigma\times\Gamma).$$
If  $\id\otimes\alpha_t\to \id \rm{ uniformly on } \uu(Q^\pr\cap
M\tens M)$, then obviously $ \id\otimes\alpha_t \to \id \rm{
uniformly on }\uu(\Delta(A))$. If $Pq \rm{ is amenable relative to
}(M\tens A)\rtimes(\Gamma\times\Sigma)$ or to $(M\tens
A)\rtimes(\Sigma\times\Gamma)$, for some projection $q\in P^\pr\cap
M\tens M$, then since $\De(M)\sset P$, it follows that $\De(M)q$ is amenable relative to $(M\tens A)\rtimes(\Gamma\times\Sigma)$
or to $(M\tens A)\rtimes(\Sigma\times\Gamma)$. But both cases imply
that $\Sigma$ is co-amenable in $\Gamma$, which is not possible, by
Lemma \ref{lem.coamenable AFP HNN}.

Suppose now that $\Gamma$ is weakly amenable and has positive first $\ell^2$-Betti number, as in assumption (3).
Since $\De(\rl G)\sset P$, the claim follows immediately from Lemma \ref{lema:spectral gap for weak amen}.

Thus, we have that
$$  \id\otimes\alpha_t \to \id \rm{ uniformly on }\uu(\Delta(A)).$$

By Lemma \ref{A embeds in A tens A}, we have that $\De(A)\prec M\tens A$, i.e. there are non-zero projections $q\in\De(A)$, \linebreak $p\in M\tens A$, a non-zero partial isometry $v\in p(M\tens M)q$ and a normal $\ast$-homomorphism \linebreak $\te:\De(A)q\to p(M\tens A)p$ such that $bv=v\te(b), \rm{ for all } b\in \De(A)q.$

Denote $N:=\te(\De(A)q)\sset p(M\tens A)p$. Then $q^\pr:=v^*v\in N^\pr\cap p(M\tens A)p$ and we may assume that $p$ is the support projection of $E_{M\tens A}(q^\pr)$. Since $q^\pr\in N^\pr\cap p(M\tens A)p$ and since $\id\otimes\alpha_t \to \id$ uniformly on $\uu(\Delta(A))$, it follows that $$id\otimes \al_t\to id \rm{ uniformly on }(N)_1 q^\pr,$$ where $(N)_1$ denotes the unit ball of $N$.

Since $E_A\circ \al_t=E_A\circ \al_t\circ E_A$, we get that $id\otimes \al_t\to id$ uniformly on $E_{M\tens A}((N)_1 q^\pr)=(N)_1 E_{M\tens A}(q^\pr)$. Since $p$ is the support of $E_{M\tens A}(q^\pr)$, we finally get that
$$id\otimes \al_t\to id \rm{ uniformly on the unit ball of }N.$$
By Lemma \ref{ioana's lemma}, there exists a finite subset $\ff$ of $\Gamma$ such that $N\prec_{M\tens A} M\tens A_0^\ff$, i.e. there are non-zero projections $q_1\in N$ and $p_1\in M\tens A_0^\ff$, a non-zero partial isometry $v_1\in p_1(M\tens A)q_1$ and a $\ast$-homomorphism $\te_1: Nq_1\to p_1(M\tens A_0^\ff)p_1$ such that $xv_1=v_1\te_1(x)$, for all $x\in Nq_1$.

Notice that $vv_1$ is non-zero. Indeed, if $vv_1=0$, then $E_{M\tens A}(v^*v)v_1=E_{M\tens A}(v^*vv_1)=0$ and since $p$ is the support of $E_{M\tens A}(v^*v)$, we get that $v_1=pv_1=E_{M\tens A}(v^*v)v_1=0$, contradiction.

Therefore $vv_1\in p_1(M\tens M)q$ is a non-zero partial isometry and $\te_1\circ \te: \De(A)q\to p_1(M\tens A_0^\ff)p_1$ is a $\ast$-homomorphism satisfying $xvv_1=v\te(x)v_1=vv_1\te_1(\te(x))$, for all $x\in \De(A)q$, i.e.
$$\De(A)\prec M\tens A_0^\ff.$$

Since $A$ is diffuse, by Proposition
\ref{prop:comultiplication}.(4), we get that $\De(A)\nprec M\tens 1$ and hence, by \cite[Lemma 1.5]{Io06}, it follows  that
$\De(M)\prec M\tens (A\rtimes \stab\ff)$, but this contradicts
Proposition \ref{prop:comultiplication}.(1), since $\stab\ff$ has
infinite index in $\Gamma\times\Gamma$.

\end{proof}

\begin{lemma}\label{G embeds to G tens G}
There exists a unitary $\Omega\in\uu(M\tens M)$ such that
$$\Omega\De(LG)\Omega^* \sset LG\tens LG.$$
\end{lemma}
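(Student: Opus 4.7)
The plan is to apply Lemma \ref{lem.intert-masa} to the pair $A\tens A\sset M\tens M$ with $B:=\De(A)$ and $\g:=\De(\rl G)$, and then to upgrade the resulting partial conjugation into a genuine unitary conjugation by exploiting the Bernoulli Cartan structure of $A\tens A$ inside $M\tens M$.

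First, I would verify the hypotheses of Lemma \ref{lem.intert-masa}. Since the $G\times G$-action on $A\tens A$ is the generalized Bernoulli action, it is free, so $A\tens A$ is a Cartan subalgebra of the II$_1$ factor $M\tens M \cong (A\tens A)\rtimes(G\times G)$. The condition $\De(A)^\pr\cap (M\tens M)\prec A\tens A$ is exactly Lemma \ref{rel com A embeds in A tens A}. For the ergodicity condition, any $\De(\rl G)$-fixed element $z$ of $Z(\De(A)^\pr\cap M\tens M)$ commutes both with $\De(A)$ (being in its relative commutant) and with $\De(\rl G)$ (by invariance), hence with $\De(M)$; thus $z\in \De(M)^\pr\cap (M\tens M)=\cz 1$ by Proposition \ref{prop:comultiplication}.(5). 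The lemma then yields a projection $p\in A\tens A$, an integer $n\ge 1$ and a row partial isometry $v\in \M_{1,n}(\cz)\otimes (M\tens M)p$ with $vv^*=1$, $v^*v=1\otimes p$ and $v^*(\De(A)^\pr\cap M\tens M)v=\M_n(\cz)\otimes (A\tens A)p$.

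Second, since $\rl G$ normalizes the Cartan $A\sset M$, the subgroup $\De(\rl G)$ normalizes $\De(A)$ and therefore also the relative commutant $\De(A)^\pr\cap (M\tens M)$. Consequently, after conjugation by $v$, the group $v^*\De(\rl G)v$ is a family of unitaries in $\M_n(\cz)\otimes p(M\tens M)p$ that normalizes the Cartan subalgebra $\M_n(\cz)\otimes (A\tens A)p$. Because $M\tens M=(A\tens A)\rtimes (G\times G)$ is a crossed product by a free action, every element of $\nn_{M\tens M}(A\tens A)$ decomposes, modulo $\uu(A\tens A)$, as a unitary from the distinguished set $\{u_{g_1}\otimes u_{g_2}:(g_1,g_2)\in G\times G\}$, and these generate exactly $\rl G\tens \rl G$.

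Third, to upgrade the partial conjugation to a genuine unitary $\Omega\in\uu(M\tens M)$, I would follow the final steps of the proof of \cite[Theorem 8.1]{BV12}: combining the fact that $\De(\rl G)\cong \rl G$ is a II$_1$ factor together with a trace/dimension argument for the index $n\cdot \tau(p)$, the amplification $\M_n(\cz)$ and the compression by $p$ can be absorbed; this is done by modifying $v$ through a suitable unitary normalizer of $A\tens A$ and then extending to a unitary element of $M\tens M$. The main obstacle is precisely this last upgrade: one must pass from an abstract partial conjugation mixing a matrix amplification with a compression to an honest unitary whose image lies inside the specific subalgebra $\rl G\tens \rl G$, and not merely in the full normalizer groupoid of $A\tens A$. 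This step relies critically on the factoriality of $\De(\rl G)$, on Proposition \ref{prop:comultiplication}.(5), and on the rigidity of the Bernoulli Cartan structure to identify the "group-theoretic" slice of $\nn_{M\tens M}(A\tens A)$ with $\rl G\tens \rl G$.
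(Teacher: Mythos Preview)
Your first two steps are correct and mirror an auxiliary argument that the paper itself uses (at the very end of its proof, to rule out a relative amenability alternative in the AFP/HNN case). The gap is entirely in your third step, and it is not a technicality: it is the whole content of the lemma.

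From the fact that $v^*\De(u_g)v$ normalizes the Cartan subalgebra $\M_n(\cz)\otimes(A\tens A)p$ you cannot conclude anything close to $v^*\De(\rl G)v\subset \M_n(\cz)\otimes p(\rl G\tens\rl G)p$. In a free crossed product $(A\tens A)\rtimes(G\times G)$ the normalizer of the Cartan is the entire full group of the orbit equivalence relation, so a normalizing unitary has the form $\sum_{h}a_h u_h$ with $a_h$ partial isometries in $A\tens A$; there is no reason for it to lie in, or be conjugate into, $\rl(G\times G)$. Passing from ``normalizes the Cartan'' to ``is conjugate into the group part'' is exactly a cocycle/orbit-equivalence rigidity statement, and the factoriality of $\De(\rl G)$ together with Proposition~\ref{prop:comultiplication}.(5) gives you no leverage here. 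The ``rigidity of the Bernoulli Cartan structure'' you invoke is not a known black box; it is precisely what the paper establishes through the tensor length deformation.

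The paper's actual route is completely different: it writes $M\tens M=A_0^{\Gamma\sqcup\Gamma}\rtimes(G\times G)$ and applies \cite[Theorem~3.3]{BV12}, which directly produces the unitary $\Omega$ once one checks two inputs: (i) $\De(\rl G)\nprec M\tens(A\rtimes\delta(\Gamma))$, which is immediate from Proposition~\ref{prop:comultiplication}.(1); and (ii) $\alpha_t\otimes\alpha_t\to\id$ uniformly on $\uu(\De(\rl G))$. Condition~(ii) is the substantial part and is obtained via the spectral gap Lemmas~\ref{lema:spectral gap for AFP and HNN} and~\ref{lema:spectral gap for weak amen}, applied to $Q=\De(\rl(1\times\Gamma))$. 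Your proposal bypasses the deformation $\alpha_t$ entirely, but that deformation is exactly the mechanism that singles out $\rl G\tens\rl G$ inside the normalizer of $A\tens A$; without it there is no argument.
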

\begin{proof}
Let $\de:\Gamma\to\Gamma\times\Gamma$ be the diagonal embedding. Observe that we can write \linebreak $M\tens M=A_0^I\rtimes (G\times G )$,
where $G\times G$ acts on the disjoint union $I:=\Gamma \sqcup
\Gamma $ of two copies of $\Gamma$, with its corresponding tensor
length deformation given by $\al_t\otimes\al_t\in
Aut({\widetilde{M}\tens \widetilde{M}})$. The stabilizer of an
element $i\in I$ under the action of $G\times G$ is either of the
form $G\times g\delta(\Gamma)g^{-1}$ or $
g\delta(\Gamma)g^{-1}\times G$, for some element $g\in G$.

Since $G$ is an i.c.c. group, by \cite[Theorem 3.3]{BV12}, it suffices to
prove that :
$$ \De(\rl G) \nprec M\tens (A\rtimes\de(\Gamma)) $$
and $$\alpha_t \otimes \alpha_t \to \id \rm{ uniformly on }
\uu(\De(\rl G)).$$

The first condition is immediate. Indeed, if $\De(\rl G)\prec M\tens
(A\rtimes\de(\Gamma))$, then by Lemma \ref{A embeds in A tens A} and
\cite[Lemma 2.3]{BV12}, we get that $\De(M)\prec M\tens
(A\rtimes\de(\Gamma))$, and hence, by Proposition
\ref{prop:comultiplication}.(1), it follows that $\delta(\Gamma)$
has finite index in $\Gamma\times\Gamma$, which is a contradiction.

To prove the second condition, notice that, by symmetry, it suffices
to prove that $ \id\otimes\alpha_t\to id$  uniformly on
$\uu{(\De(\rl G))}$. Since every group element in $G$ is the product
of an element in $\Gamma\times 1$ and an element in
$1\times\Gamma$, again by symmetry, it suffices to prove that $
\id\otimes\alpha_t\to id$  uniformly on
$\uu{(\De(\rl(1\times\Gamma)))}$. Denote $Q:=\De(\rl(1\times
\Gamma))\subset M\tens M$ and $P:=\nn_{M\tens M}(Q)^{\bi}$. By
Proposition \ref{prop:comultiplication}.(3) it follows that $Q$ is
strongly non-amenable relative to $M\tens 1$ and moreover, since
$A$ is amenable, we have that $Q$ is strongly non-amenable relative
to $M\tens A$. Clearly, all the unitaries $\Delta(u_g)$, with $g\in
\Gamma\times 1$, commute with $Q$ and $\Delta(\rl G)\sset P$.

If $\Gamma$ is weakly amenable and has positive first $\ell^2$-Betti number, as in assumption (3), then the claim follows from Lemma \ref{lema:spectral gap for weak amen}.

If $\Gamma$ is an amalgamated free product $\Gamma=\Gamma_1\ast_\Sigma\Gamma_2$ or an HNN extension
$\Gamma=$HNN$(\Gamma_0, \Sigma,\theta)$, as in assumption (1), respectively (2), then Lemma \ref{lema:spectral gap for AFP and HNN} implies that either
$$ \id\otimes\alpha_t\to \id \rm{ uniformly on } \uu(Q^\pr\cap M\tens M)$$
or there exists a non-zero projection $q\in P^\pr\cap M\tens M$ such that
$$ Pq \rm{ is amenable relative to }(M\tens A)\rtimes(\Gamma\times\Sigma)\rm{ or to }(M\tens A)\rtimes(\Sigma\times\Gamma).$$

If $\id\otimes\alpha_t\to \id \rm{ uniformly on } \uu(Q^\pr\cap
M\tens M)$, then our claim is proven. To finish the proof, we show
that the second alternative gives rise to a contradiction. Note
that, since $\De(\rl G)\sset P$, it implies that $\De(\rl G)q$ is
amenable relative to $(M\tens A)\rtimes(\Gamma\times\Sigma)$ or to
$(M\tens A)\rtimes(\Sigma\times\Gamma)$.

By Lemma \ref{rel com A embeds in A tens A} we know that
$N:=\De(A)^\pr\cap M\tens M\prec A\tens A$. Then Lemma
\ref{lem.intert-masa} implies that there exist a projection $p\in
A\tens A$ and $v\in \M_{1,n}(\cz)\otimes (M\tens M)p$ such that
$vv^*=1$, $v^*v=1\otimes p$ and $v^*Nv=\M_n(\cz)\otimes (A\tens
A)p$. Note that, since $\De(A)$ is abelian, we have that
$\De(A)\sset Z(N)$ and hence $v^*\De(A)v\sset 1\tens (A\tens A)p$.
Denote $\g:=\{\De(u_g)\mid g\in G\}$ and remark that $\g$ is a group
of unitaries normalizing $N$. Since $\De(\rl G)q$ is amenable
relative to $(M\tens A)\rtimes(\Gamma\times\Sigma)$ or to $(M\tens
A)\rtimes(\Sigma\times\Gamma)$ and since $\De(M)^\pr\cap M\tens
M=\cz1$, applying Lemma \ref{lem.rel-ameab} for the group of
unitaries $v^*\g v$ normalizing $v^*Nv$, it follows that
$v^*\De(M)v$ is amenable relative to $\M_n(\cz)\tens M\tens
(A\rtimes (\Gamma\times\Sigma))$ or to $\M_n(\cz)\tens M\tens
(A\rtimes (\Sigma\times\Gamma))$. This further implies that $\De(M)$
is amenable relative to $ M\tens (A\rtimes (\Gamma\times\Sigma))$ or
to $ M\tens (A\rtimes (\Sigma\times\Gamma))$, and finally, we get
that both cases imply the co-amenability of $\Sigma$ in $\Gamma$,
which contradicts Lemma \ref{lem.coamenable AFP HNN}.
\end{proof}

\begin{lemma}\label{lem.weak mixing adjoint}
Denote $N:=\De(A)^\pr\cap M\tens M$. If $\hh\sset \rl^2(N)$ is a finite dimensional subspace that is globally invariant under the adjoint action of $(\De(g))_{g\in G}$, then $\hh\sset \cz1$.
\end{lemma}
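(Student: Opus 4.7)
The plan is a two-step Fourier argument exploiting the Bernoulli crossed product structure $M\tens M = (A\tens A)\rtimes (G\times G)$, where $G=\Gamma\times\Gamma$, together with the fact that $G$ is i.c.c.\ in all three cases of Theorem \ref{thm.main-intro} (since $\Gamma$ is i.c.c.).

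In the first step, every $\xi\in L^2(M\tens M)$ has a Fourier expansion $\xi=\sum_{(h,k)\in G\times G}\xi_{h,k}(u_h\otimes u_k)$ with $\xi_{h,k}\in L^2(A\tens A)$, and a direct computation gives
\begin{equation*}
\ad\De(g)(\xi)=\sum_{(h,k)\in G\times G}(\si_g\otimes\si_g)(\xi_{g^{-1}hg,\,g^{-1}kg})(u_h\otimes u_k).
\end{equation*}
Fix an orthonormal basis $\xi^{(1)},\ldots,\xi^{(m)}$ of $\hh$, let $U(g)\in\uu(\M_m(\cz))$ be the unitary matrix implementing $\ad\De(g)$ on $\hh$, and set $f(h,k):=\sum_{i=1}^{m}\|\xi^{(i)}_{h,k}\|_2^2$. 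Using that $\si_g\otimes\si_g$ is trace-preserving on $A\tens A$ and that the unitarity of $U(g)$ makes the off-diagonal cross terms vanish in $\sum_i\|\sum_jU(g)_{ji}\xi^{(j)}_{h,k}\|_2^2$, a short calculation yields $f(g^{-1}hg,g^{-1}kg)=f(h,k)$ for every $g\in G$. Since $\sum_{(h,k)}f(h,k)=m<\infty$ and the diagonal $G$-action on $G\times G$ by conjugation has only the trivial finite orbit $\{(e,e)\}$ (by the i.c.c.\ property of $G$), we conclude that $\xi^{(i)}_{h,k}=0$ for $(h,k)\neq(e,e)$, so $\hh\subset L^2(A\tens A)$.

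The second step identifies $A\tens A\cong A_0^{(\Gamma\sqcup\Gamma)}$ as a generalized Bernoulli algebra on which $G$ acts by $\si\otimes\si$, and uses the Wiener--It\^o chaos decomposition
\begin{equation*}
L^2(A\tens A)=\cz 1\oplus\bigoplus_{\emptyset\neq F\subset\Gamma\sqcup\Gamma\text{ finite}}\bigotimes_{s\in F}\bigl(L^2(A_0)\ominus\cz\bigr),
\end{equation*}
where $G$ permutes the summands via its left-right multiplication action on $\Gamma\sqcup\Gamma$. Repeating the orthonormal-basis trick produces a summable, $G$-invariant function $F\mapsto\sum_i\|\xi^{(i)}_F\|_2^2$, hence supported on finite $G$-orbits of subsets. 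For any non-empty finite $F=F_1\sqcup F_2\subset\Gamma\sqcup\Gamma$, the stabilizer of $F$ in $G$ is contained in the stabilizer of whichever $F_j$ is non-empty, which by Remark \ref{rem:stabilizer of finite set} is conjugate to a subgroup of $\de(\Gamma)$; since $|G:\de(\Gamma)|=|\Gamma|=\infty$, the orbit of $F$ is infinite. Thus $f$ is supported on $\{\emptyset\}$, giving $\hh\subset\cz 1$.

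The only non-formal point is that $\Gamma$ must be i.c.c.\ in cases (1) and (2): this follows from the non-degeneracy and malnormality hypotheses via the centralizer analysis already invoked in Remark \ref{rem:stabilizer of finite set}, which shows that every centralizer $Z_\Gamma(g)$ of a non-trivial $g$ is either infinite cyclic or conjugate into one of the vertex groups — never of finite index. With i.c.c.\ of $\Gamma$ secured, the rest of the argument is a straightforward exercise in Fourier analysis on Bernoulli crossed products.
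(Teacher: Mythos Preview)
Your proof is correct and takes a genuinely different route from the paper.

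The paper argues via the comultiplication: from $\hh\subset L^2(N)$ it forms the $\De(M)$-$\De(M)$-bimodule $\overline{\hh\,\De(M)}$, which is finitely generated on the right, and then invokes \cite[Proposition 7.2.3]{IPV10} together with the fact that $\Lambda$ is i.c.c.\ to force $\hh\subset\De(L^2(M))$. Commutation with $\De(A)$ then pushes $\hh$ into $\De(L^2(A))$, and weak mixing of $G\curvearrowright A$ (\cite[Lemma 2.12]{BV12}) finishes. In particular, the paper's argument uses both the mystery decomposition $M\cong\rl\Lambda$ and the hypothesis $\hh\subset L^2(\De(A)'\cap M\tens M)$.

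Your argument bypasses all of this: it is pure Fourier analysis on the Bernoulli crossed product $M\tens M=(A\tens A)\rtimes(G\times G)$, using only the i.c.c.\ property of $G$ and the infinite index of stabilizers in $G\curvearrowright\Gamma\sqcup\Gamma$. This is more elementary (no bimodule machinery, no reference to $\Lambda$) and actually proves the stronger statement that any finite-dimensional $\ad\De(G)$-invariant subspace of $L^2(M\tens M)$ lies in $\cz 1$ --- the hypothesis $\hh\subset L^2(N)$ is never used. The price is that you must verify $\Gamma$ i.c.c.\ directly in cases (1) and (2), which you do correctly via the centralizer dichotomy. One small quibble: Remark \ref{rem:stabilizer of finite set} is stated for $|\ff|\ge 2$, so the case $|F_j|=1$ (stabilizer conjugate to $\de(\Gamma)$ itself) should be mentioned separately, but this is a triviality.
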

\begin{proof}
Let $\hh\sset \rl^2(N)$ be a finite dimensional subspace, globally $(\ad\De(g))_{g\in G}$-invariant. Define $\kk\sset \rl^2(M\tens M)$ as the norm closed linear span of $\hh\De(M)$. Since $\hh$ and $\De(A)$ commute, we get that $\De(A)\kk\sset \kk$. Also, $\De(u_g)\kk\sset \kk$, for all $g\in G$, since $\hh$ is globally invariant under $(\ad\De(u_g))_{g\in G}$. Thus $\kk$ is a $\De(M)$-$\De(M)$-bimodule which, by construction, is finitely generated as a right $\De(M)$-module.

Let $s\in \Lambda$ be a non-trivial element. Since $\Lambda$ is an i.c.c. group, the centralizer of $s$ in $\Lambda$ has infinite index in $\Lambda$. Therefore, by Proposition \ref{prop:comultiplication}.(1) and \cite[Proposition 7.2.3]{IPV10}, it follows that $\kk\sset\De(\rl^2(M))$, hence $\hh\sset\De(\rl^2(M))$. Since $\De(A)$ is abelian and since $\hh$ commutes with $\De(A)$, we get that $\hh\sset \De(\rl^2(A))$. By \cite[Lemma 2.12]{BV12} the action of $G$ on $A$ is weakly mixing and since $\hh$ is finite dimensional and globally $(\ad\De(g))_{g\in G}$-invariant, we must have that $\hh\sset\cz1$.
\end{proof}

\begin{proof}[Proof of Theorem \ref{thm.main-intro}]
Let $\Gamma$ be a countable group belonging to one of the three
classes of groups in the theorem. Consider the left-right action
$\Gamma\times\Gamma\act \Gamma$ and define the generalized wreath
product $\g=H^{(\Gamma)}\rtimes (\Gamma\times\Gamma)$, where
$H:=\zz/2\zz$ or $\zz/3\zz$. Assume that $\pi:\rl\La\to\rl\g$ is a $\ast$-isomorphism, for
some countable group $\Lambda$. We want to prove that the groups $\g$ and $\Lambda$ are isomorphic and that this group isomorphism implements $\pi$, as in Definition \ref{def.Wstar-superrigid}.

Putting all the lemmas we have proven in this section together, we
get that under these assumptions, all the three relations in
\eqref{main relations} are satisfied and now we can literally repeat
the proof of \cite[Theorem 8.1]{BV12} in the particular case of
$H_0=H$. This yield an abelian group $H^\pr$ with $\abs{H}=\abs{H^\pr}$, a group isomorphism $\de:\Lambda\to \g^\pr:=(H^\pr)^{(\Gamma)}\rtimes (\Gamma\times\Gamma)$, a p.m.p. isomorphism $\theta:\widehat{H^\pr}\to\widehat{H}$, a character $\om:\g\to\T$ and a unitary $w\in\uu(\rl\g)$ such that $$\pi=\ad(w)\circ \al_\om\circ \pi_\theta\circ\pi_\de,$$
where $\pi_\de:\rl\La\to\rl\g^\pr$ is the $\ast$-isomorphism given by $\pi_\de(v_s)=u_{\de(s)}$, for all $s\in \La$, $\pi_\te:\rl\g^\pr\to\rl\g$ is the natural $\ast$-isomorphism associated with an infinite tensor product of copies of $\te$ and $\al_\om$ is the automorphism of $\rl\g$ defined by $\al_\om(u_g)=\om(g)u_g$, for all $g\in\g$.

Since $\abs{H}=\abs{H^\pr}$, we have that $H\cong H^\pr$ and we may assume that $H=H^\pr$. Thus $\g=\g^\pr$ and our initial isomorphism
$\pi:\rl\La\cong\rl\g$ is the composition of an inner automorphism $\ad(w)$, group like isomorphisms $\pi_\de$ and $\al_\om$ implemented by the group isomorphism $\de:\La\to\g$ and the character $\om$ and a $\ast$-isomorphism $\pi_\theta:\rl\g\to\rl\g$ which is not group like in general. Since $\rl H$ has dimension 2 or 3, one can check that every automorphism $\theta:\rl H\to\rl H$ is of the form $\theta=\al_\rho \circ \pi_\gamma$, where $\rho$ is a character of $H$ and $\gamma$ is a group automorphism of $H$. Then $\pi_\theta$ is group like as well, and the theorem is proven.
\end{proof}

\subsection*{Acknowledgements}

The author is grateful to his advisor Stefaan Vaes for many suggestions and comments on various drafts of this paper. The author was supported by the Research Programme G.0639.11 of the Research Foundation - Flanders (FWO).

\bibliographystyle{alpha1}

\vspace{1,5cm}

\it{Mihaita Berbec}\\
\it{KU Leuven, Department of Mathematics \\ Celestijnenlaan 200B,
B-3001 Leuven, BELGIUM}\\

mihai.berbec@wis.kuleuven.be

\end{document}